\documentclass[11pt]{amsart}
\tolerance=500
\setlength{\emergencystretch}{3em}
\usepackage[T1]{fontenc}
\usepackage{lmodern}
\usepackage{ifthen}
\usepackage{amsfonts}
\usepackage{amsxtra}
\usepackage{amssymb}
\usepackage{amsthm}
\usepackage{array}
\usepackage[margin=1in]{geometry}
\usepackage{xcolor}
\definecolor{cite}{rgb}{0.50,0.00,1.00}
\definecolor{url}{rgb}{0.00,0.50,0.75}
\definecolor{link}{rgb}{0.00,0.00,0.50}
\usepackage[colorlinks,linkcolor=link,urlcolor=url,citecolor=cite,pagebackref,breaklinks]{hyperref}
\usepackage{mathtools}
\usepackage{mathrsfs}
\usepackage[all]{xy}
\usepackage[lite,abbrev,msc-links]{amsrefs}

\makeindex

\theoremstyle{definition} 
\newtheorem{Unity}{Unity}[section] 
\newtheorem*{Definition*}{Definition} 
\newtheorem{Definition}[Unity]{Definition} 

\newtheorem*{Setting}{Setting}

\theoremstyle{plain} 
\newtheorem*{Theorem*}{Theorem}
\newtheorem{Theorem}[Unity]{Theorem}
\newtheorem{Proposition}[Unity]{Proposition}
\newtheorem*{Corollary*}{Corollary}
\newtheorem{Corollary}[Unity]{Corollary}
\newtheorem{Lemma}[Unity]{Lemma}

\theoremstyle{remark} 
\newtheorem*{Remark*}{Remark}

\numberwithin{Unity}{section}

\newcommand{\X}{\mathcal{X}}
\newcommand{\Y}{\mathcal{Y}}
\newcommand{\M}{\mathcal{M}}
\newcommand{\C}{\mathcal{C}}
\newcommand{\CL}{\mathcal{CL}}
\newcommand{\FL}{\mathcal{FL}}

\begin{document}

\title{Equivalence of two notions of log moduli stacks}
\author{Junchao Shentu}
\address{Academy of Mathematics and Systems Science, Chinese Academy of Science, Beijing, P. R. China}
\email{stjc@amss.ac.cn}
\begin{abstract}
We show the equivalence between two notions of log moduli stacks which appear in literatures. In particular, we generalize M.Olsson's theorem of representation of log algebraic stacks and answer a question posted by him (\cite{Ol4} 3.5.3). As an application, we obtain several fundamental results of algebraic log stacks which resemble to those in algebraic stacks.
\end{abstract}
\maketitle
\tableofcontents
\section{Introduction}
The Logarithmic structure is initiated by J.M.Fontaine and L.Illusie to treat various degenerations in algebraic geometry, which is further developed by Kazuya Kato \cite{KKato}. It is also realized that the logarithmic structure naturally appears on the boundary in various moduli spaces, such as $\overline{\mathcal{M}}_{g,n}$, the moduli stack of stable curves of genus $g$ with $n$ marked points \cite{FKato}, moduli space of principally polarized abelian varieties \cite{Ol2}, and moduli of polarized K3 surfaces \cite{Ol3}. For a survey of logarithmic geometry and further references, we refer to \cite{Ambro}.

However, there are two reasonable ways to consider log stacks:
\begin{enumerate}
  \item One way is to take an algebraic stack (Definition \ref{stack}) $\X$ with fppf topology, and define the log structure on $\X$ as we did for schemes. Namely, a log structure on $\X$ is a pair $(\M, \alpha)$ where $\M$ is a coherent sheaf of monoid, $\alpha:\M\rightarrow \mathcal{O}_{\X}$ is a homomorphism of monoids to multiply monoid of $\mathcal{O}_{\X}$, and $\alpha|_{\alpha^{-1}\mathcal{O}_{\X}^\ast}:\alpha^{-1}\mathcal{O}_{\X}^\ast\rightarrow \mathcal{O}_{\X}^\ast$ is an isomorphism. We call such algebraic stack with log structure a \emph{log algebraic stack} (Definition \ref{DEF_logalgstack}).
  \item Another way is to consider stacks on the category of fine log schemes (with fppf topology). We call it \emph{algebraic log stack} (Definition \ref{DEF_Alglogstack}) if: (1) its diagonal is Asp-representable (Definition \ref{DEF_ab_rep_Asp}); (2) it has a strict log smooth cover by a fine log scheme. This notion of algebraic log stack is considered in \cite{Ol4}, with some finite presentation condition. We consider general algebraic log stacks in this paper.
\end{enumerate}

The notion of log algebraic stacks is theoretically easier to understand. Since the objects are algebraic stacks with log structures, we can study them based on the theory of algebraic stacks \cite{dejong} and log structures in \cite{KKato},  \cite{Ol1}. In practice, log algebraic stacks arise when moduli spaces have a codimension 1 boundary parameterizing degenerate objects.

However, when concerning moduli problems with degenerations (or compactification of moduli spaces), the notion of algebraic log stack is more natural, as it is presented in \cite{FKato}. For example, to get a compact moduli space of varieties, one has to consider families with degenerate fibers, which always turn out to be log smooth. This suggests that we should establish the theory of stacks on the category of log schemes, in order to make the logarithmic geometry applicable to moduli problems. In this paper, we show that the two notions of log moduli stacks are equivalent. As a consequence, we can reduce all natural problems about algebraic log stacks to the corresponding problems on algebraic stacks (with fine log structure).

A log algebraic stack $\X$ naturally induces an algebraic stack $\widetilde{\X}$ over $\mathbf{Flog}$, by defining $\widetilde{\X}(U)=Hom(U,\X)$ the category of log morphisms from $U$ to $\X$. We say that $\widetilde{\X}$ is represented by $\X$ (In this paper we will reformulate this notion by generalized Gillam's functor (Definition \ref{GC}), which is conceptually more convenient).

The first comparison example is due to F.Kato. This paper is motivated by understanding it.
\begin{Theorem*}(\cite{FKato} Theorem 4.5)
The algebraic log stack $\mathcal{L}\M_{g,n}$ of log smooth curve of type $(g,n)$ is represented by log algebraic stack $(\overline{\M}_{g,n},\partial\overline{\M}_{g,n})$, where $\partial\overline{\M}_{g,n}$ is the log structure associated to the NC divisor corresponding to nonsmooth stable curves.
\end{Theorem*}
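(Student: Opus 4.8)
The plan is to compare the two sides objectwise over the category $\mathbf{Flog}$ of fine log schemes and then invoke descent. Since both $\mathcal{L}\M_{g,n}$ and the stack $\widetilde{\X}$ on $\mathbf{Flog}$ induced by the log algebraic stack $\X=(\overline{\M}_{g,n},\partial\overline{\M}_{g,n})$ are stacks for the strict fppf topology, it suffices to produce, for each fine log scheme $U$, an equivalence of groupoids between $\mathcal{L}\M_{g,n}(U)$, the log smooth curves of type $(g,n)$ over $U$, and $\mathrm{Hom}(U,\X)$, the log morphisms from $U$ to the log algebraic stack, natural in $U$. Unwinding the definition of a log morphism, the latter consists of a morphism of underlying algebraic stacks $f:\underline{U}\to\overline{\M}_{g,n}$ — equivalently a stable curve $C\to\underline{U}$ of type $(g,n)$ — together with a morphism of fine log structures $f^{\ast}\partial\overline{\M}_{g,n}\to M_U$. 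The aim is to match this data with that of a log smooth curve.

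First I would record F. Kato's local classification of log smooth curves, which is the geometric engine of the proof. Strict-étale locally on $U$ and étale locally on $X$, a log smooth curve $X\to U$ is isomorphic to one of three standard models: the smooth chart $\mathbb{A}^1_U$; a marked section, where $M_X$ acquires one extra generator along the section; and the node $\mathrm{Spec}\,\mathcal{O}_U[x,y]/(xy-t)$ equipped with the chart $\mathbb{N}^2\to\mathcal{O}_X$, $(a,b)\mapsto x^ay^b$, whose base receives a distinguished element $q\in M_U$ with $\alpha(q)=t$ via the diagonal $\mathbb{N}\to\mathbb{N}^2$. In particular the underlying map $\underline{X}\to\underline{U}$ is a prestable curve, and type $(g,n)$ together with stability produces the classifying morphism $\underline{U}\to\overline{\M}_{g,n}$; this map carries the boundary $\partial\overline{\M}_{g,n}$ to the union of the node loci, with the smoothing parameter $t$ generating the boundary ideal at each node.

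With this in hand I would construct the two functors and check they are mutually inverse. Given a log smooth curve $X\to U$, the underlying stable curve defines $\underline{U}\to\overline{\M}_{g,n}$, and the distinguished elements $q$ at the nodes assemble into a map $f^{\ast}\partial\overline{\M}_{g,n}\to M_U$: because $\partial\overline{\M}_{g,n}$ is generated near the boundary precisely by the smoothing parameters, and because the node chart is unique up to the substitution $xy\mapsto(\text{unit})\cdot xy$ which is absorbed by $\mathcal{O}_U^{\ast}$, the induced map on characteristic monoids $\mathbb{N}^{\#\text{nodes}}\to\overline{M}_U$ is canonical. Conversely, from a log morphism the underlying stable curve is given, and the map $f^{\ast}\partial\overline{\M}_{g,n}\to M_U$ supplies exactly one log-smoothing element over each node, allowing one to upgrade the nodal curve to a log smooth curve via the standard node model, the remaining smooth and marked loci carrying their canonical log structures. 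I would then verify that these assignments are inverse up to canonical $2$-isomorphism and compatible with base change, which gives the equivalence of groupoids and hence the equivalence of stacks, i.e. the desired representation through the generalized Gillam functor.

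The main obstacle is the canonicity and gluing of the log-smoothing data at the nodes. The element $q$ with $\alpha(q)=t$ is only pinned down after quotienting out the ambiguity in the local node coordinates, so the real content is to show that the two monoid sheaves in play — the characteristic sheaf of $\partial\overline{\M}_{g,n}$ pulled back to $U$, and the characteristic sheaf produced by Kato's log smoothing — are canonically isomorphic as subsheaves of $M_U$ compatibly with $\alpha$, and that this isomorphism descends along the non-strict étale cover of $X$ and glues over the (possibly several) node loci of a single fibre. This reduces to the deformation theory of an ordinary double point, where the smoothing parameter is shown to generate the boundary ideal; once that local comparison is secured, the global statement follows by the stack-theoretic descent set up in the first step.
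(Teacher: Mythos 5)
You should first be aware that the paper does not prove this statement: it is quoted verbatim from F.~Kato (\cite{FKato}, Theorem 4.5) in the introduction as the motivating example, so there is no in-paper proof to compare against. Your sketch is, in substance, a reconstruction of Kato's original argument — local classification of log smooth curves into the three models, the classifying map to $\overline{\M}_{g,n}$, and the identification of the pullback of the boundary log structure with the monoid generated by the smoothing parameters at the nodes — and you have correctly located the crux of the matter in the comparison of characteristic sheaves at the nodes, which rests on the deformation-theoretic fact that the branches of the NC boundary divisor correspond to the nodes with the smoothing parameter generating the corresponding ideal. In the language of this paper, what your forward functor really produces is the structure of enough compatible minimal objects on $\mathcal{L}\M_{g,n}$ (Kato's \emph{basic} log curves are exactly the minimal objects of Definition \ref{minimalob}), so an alternative organization of the same argument is to verify conditions \textbf{B1}, \textbf{B2} of Lemma \ref{GC} and then identify the resulting log algebraic stack via Corollary \ref{logGC}; this is precisely the pattern the paper abstracts in Theorems \ref{alglogstackminimalweak} and \ref{alglogstackminimal}.

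Two points in your sketch need more care than you give them. First, the local classification you invoke is not a consequence of log smoothness alone: a log smooth morphism of relative dimension one need not have nodal fibers, and Kato's structure theorem requires the morphism to be in addition integral (and the log structure to be vertical away from the markings); you should make explicit that these conditions are part of the definition of $\mathcal{L}\M_{g,n}$, since otherwise the very first step — that the underlying family is a stable curve — fails. Second, for an equivalence of groupoids you must also match automorphisms, i.e.\ show that a log smooth curve admits no nontrivial automorphism covering the identity of the underlying stable curve and the identity of $M_U$; this is again a local computation with the pushout description $M_U\oplus_{\mathbb N}\mathbb N^2$ of the log structure at a node, but it is a genuine step and your phrase ``verify that these assignments are inverse up to canonical $2$-isomorphism'' currently elides it. With those two items supplied, the argument is complete and agrees with the source you are asked to reprove.
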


One general result is due to M.Olsson, we state it in our terminology:
\begin{Theorem*}(\cite{Ol4} Theorem 1.3.8)
An Algebraic log stack locally of finite presentation is represented by a log algebraic stack.
\end{Theorem*}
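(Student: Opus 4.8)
The plan is to reverse-engineer a log algebraic stack out of a groupoid presentation of $\Y$ and then identify the two via the generalized Gillam functor. By definition an algebraic log stack $\Y$ admits a strict log smooth surjection $P\colon (U,\M_U)\to\Y$ from a fine log scheme, and its diagonal is Asp-representable; hence $R:=(U,\M_U)\times_\Y(U,\M_U)$ is represented by a fine log algebraic space, and the two projections together with the composition law exhibit a groupoid $R\rightrightarrows U$ in fine log (algebraic) spaces with $\Y\simeq[U/R]$ as a stack over $\mathbf{Flog}$. Because $P$ is a cover, both structure morphisms $s,t\colon R\to U$ are strict and log smooth. I would first produce the \emph{underlying} algebraic stack: a strict log smooth morphism has smooth underlying morphism of schemes/algebraic spaces (Kato's chart criterion, in the form used by Olsson), so $\underline{s},\underline{t}\colon\underline{R}\rightrightarrows\underline{U}$ are smooth, $\underline{R}\rightrightarrows\underline{U}$ is a smooth groupoid in algebraic spaces, and its quotient $\X:=[\underline{U}/\underline{R}]$ is an algebraic stack, locally of finite presentation because $\Y$ is.

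Next I would descend the log structure. The fine log structure $\M_U$ on $\underline{U}$ carries descent data for $\underline{R}\rightrightarrows\underline{U}$: since $R$ is the fibered product over $\Y$ and the two maps to $\Y$ are \emph{strict}, there is a canonical isomorphism $\underline{s}^*\M_U\xrightarrow{\ \sim\ }\underline{t}^*\M_U$ of fine log structures on $\underline{R}$ satisfying the cocycle condition over $\underline{R}\times_{\underline{U}}\underline{R}$. By fppf descent for coherent log structures this datum glues to a fine log structure $\M_\X$ on $\X$, so that $P$ becomes strict; hence $(\X,\M_\X)$ is a log algebraic stack.

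It then remains to check $\widetilde{(\X,\M_\X)}\simeq\Y$. I would build the comparison morphism $\Phi\colon\Y\to\widetilde{(\X,\M_\X)}$ through the generalized Gillam functor: a $T$-point of $\Y$ over a fine log scheme $(T,\M_T)$ gives, after an fppf-local choice of lift to $U$, a log morphism to $(\X,\M_\X)$, and the universal property of the quotient makes this independent of the choice up to the groupoid. To see $\Phi$ is an equivalence it suffices to check it fppf-locally, i.e.\ after base change along the cover $P$, where it reduces to the corresponding statement for the fine log scheme $(U,\M_U)$ --- precisely the representability of Olsson's stack $\mathrm{Log}_{(\X,\M_\X)}$ together with the elementary fact that on a fine log scheme the Gillam functor is the identity on log morphisms. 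Essential surjectivity then follows because $\widetilde{(\X,\M_\X)}$ and $\Y$ are both the fppf-stackification of the same groupoid, while full faithfulness reduces to the comparison over $R$.

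The main obstacle I anticipate is the last two steps taken together: descending the log structure compatibly and, above all, proving that $\Phi$ is an \emph{isomorphism} rather than merely a morphism. This is exactly where the local finite presentation hypothesis and Olsson's representability theorem for $\mathrm{Log}$ enter --- one needs charts for the fine log structures (which require coherence/finite presentation), the algebraicity of $\mathrm{Log}_{(\X,\M_\X)}$, and a careful check that every object of $\Y$ arises, fppf-locally on the base, as a log morphism into $(\X,\M_\X)$ carrying the induced minimal log structure. Verifying that Asp-representability of the diagonal descends to algebraicity of $\X$ at the underlying level is a secondary but unavoidable technical point that must be settled before the passage to the underlying groupoid can be justified.
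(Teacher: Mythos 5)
Your proposal is essentially sound, but it takes the opposite route from the paper. You build the log algebraic stack $(\X,\M_\X)$ directly: groupoid presentation $R\rightrightarrows U$ of $\Y$, pass to underlying spaces (strict log smooth $\Rightarrow$ smooth underlying, by Kato), take the quotient $[\underline{U}/\underline{R}]$, descend $\M_U$ using strictness of $s,t$, then construct and verify a comparison equivalence. This is in spirit Olsson's original argument. The paper instead never constructs $(\X,\M_\X)$ by hand: it invokes Gillam's criterion (Lemma \ref{GC}), which says the essential image of $\Phi_{log}$ on fine log CFGs is exactly the stacks over $\mathbf{Flog}_S$ with enough compatible minimal objects, and then proves (Theorem \ref{alglogstackminimal}, via the strict factorization Lemma \ref{factorbystrict_Stack}) that every algebraic log stack has them --- the quasi-inverse is simply the subcategory of minimal (= strict) objects. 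The technical core is the same in both: descent of fine log structures along the strict flat cover, using that both projections $R\to U$ are strict; the paper just runs this descent one $T$-point at a time to produce the canonical strict factorization $T\to T_0\to\Y$, rather than once globally. What the paper's route buys is precisely the step you flag as your main obstacle: proving $\Phi$ is an equivalence rather than a mere morphism is absorbed into Gillam's lemma, and no finite presentation hypothesis is ever used (the paper's Theorem \ref{Phi_corres_Astack} drops it). Two points you should make explicit if you pursue your route: (i) to form $\underline{R}$ you already need the space-level representability theorem (the paper's Theorem \ref{Phi_corres_Asp}), since $R$ is a priori only an algebraic log space over $\mathbf{Flog}_S$; and (ii) the equivalence check should be organized as: $\widetilde{U}\to\widetilde{(\X,\M_\X)}$ is an epimorphism (any log map $T\to(\X,\M_\X)$ lifts strict-\'etale-locally to $U$ because $\M_\X$ pulls back to $\M_U$), and $\widetilde{U}\times_{\widetilde{\X}}\widetilde{U}\simeq h_R$ since $\Phi_{log}$ preserves fiber products, so both sides are stackifications of the same groupoid; the representability of Olsson's $\mathrm{Log}$ is not actually needed here. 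The lfp hypothesis is likewise not where charts come from (fine log structures are coherent by definition); it plays no essential role.
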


We generalize this result to the stacks not necessarily locally of finite presentation (Theorem \ref{Phi_corres_Astack}), using the notion of minimal objects (Definition \ref{minimalob}), which is conceptually more natural. First we would like to explain how the notion of minimal object naturally appears.

Given an algebraic log stack $\widetilde{\X}$ induced by a log algebraic stack $\X$, i.e., $\widetilde{\X}(U)=Hom(U,\X)$, the category of log morphisms from $U$ to $\X$. Such category $\widetilde{\X}$ (over $\mathbf{Flog}$) satisfies that, given a base scheme $\underline{B}$ and a morphism $f:\underline{B}\rightarrow\underline{\X}$, there is a log structure on $\underline{B}$ which makes $f$ strict. We denote the associated log morphism $\tau_f$. Then $\tau_f$ is minimal in the sense that for any log morphism $\tau:B\rightarrow\X$ with the underlining morphism $f$, $\tau$ factors through $\tau_f$. It turns out that enough compatible minimal objects is sufficient for representability (\cite{Gi1} or Lemma \ref{GC}), and one of the main results in this paper is that algebraic log stacks always have enough compatible minimal objects, hence representable (Theorem \ref{alglogstackminimal}). This generalize M.Olsson's result.

In concrete moduli problems, `minimal objects' (Definition \ref{minimalob}) is initiated in various works of F.Kato (\emph{basic log curve} in \cite{FKato}), M.Gross and B.Siebert (\emph{basic stable log map} in \cite{Gross}), M.Olsson (\emph{distinguished object} in \cite{Ol3} and \emph{solid objects} in \cite{Ol4}), etc., and is formally studied by W.D.Gillam in \cite{Gi1}.

The main results of our paper are:

\begin{Theorem*}
Given a fine log scheme $S$. There is a canonical strict 2-functor (Definition \ref{DEF_2-functor}) $$\Phi_{log}^{alg}:\mathbf{LAS}/S\rightarrow\mathbf{ALS}/S$$ from the 2-strict category (Definition \ref{DEF_2-category}) of log algebraic stacks (Definition \ref{DEF_logalgstack}) to the 2-strict category of algebraic log stacks (Definition \ref{DEF_Alglogstack}, Proposition \ref{Phialgstack}), satisfying:
\begin{enumerate}
  \item $\Phi^{alg}_{log}$ restricts to a stricts 2-equivalence (Definition \ref{DEF_2-functor}, Theorem \ref{Phi_corres_Astack})
  $$\mathbf{FLAS}/S\rightarrow\mathbf{ALS}/S$$.

  \item $\X\in \mathbf{FLAS}_S$ is a fine log scheme (resp. a DM-log stack or an fine log algebraic space), (locally) Noetherian, regular, normal, $S_n$, Cohen-Macaulay, reduced, of characteristic $p$, saturated, log regular, quasi-compact, quasi-separate, etc. if and only if $\Phi^{alg}_{log}\X$ is (Theorem \ref{Phi_presevescheme}, \ref{Phialgsp}, \ref{Phi_preproperty_Astack}).
  \item A morphism $f$ in $\mathbf{FLAS}_S$ is representable, Asp-representable, locally of finite representation, flat, smooth, normal, Cohen-Macaulay, $S_n$, quasi-compact, quasi-separated, strict, integral, saturated, Kummer, Cartier, log smooth, log flat, etc. if and only if $\Phi^{alg}_{log}f$ is (Theorem \ref{Phi_premorph_Astack}).
\end{enumerate}

\end{Theorem*}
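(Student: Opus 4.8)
The statement is an assembly of the preceding constructions, so the plan is to build the $2$-functor $\Phi^{alg}_{log}$ explicitly and then verify the three assertions one cover at a time. First I would pin down the construction: for a fine log algebraic stack $\X=(\underline{\X},\M_{\X})$ (Definition \ref{DEF_logalgstack}) set $\Phi^{alg}_{log}\X=\widetilde{\X}$ with $\widetilde{\X}(U)=\mathrm{Hom}(U,\X)$, the category of log morphisms over $\mathbf{Flog}$. The first task is to check that $\widetilde{\X}$ lands in $\mathbf{ALS}$ (Definition \ref{DEF_Alglogstack}): its diagonal is Asp-representable (Definition \ref{DEF_ab_rep_Asp}) because the diagonal of $\underline{\X}$ is representable and the remaining log data is controlled fiberwise, and a strict log smooth cover is obtained by equipping a smooth scheme cover $\underline{P}\to\underline{\X}$ with the pulled-back log structure $P$, so that $\widetilde{P}\to\widetilde{\X}$ is strict and log smooth. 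The decisive point is that every object of $\widetilde{\X}(U)$ factors uniquely through a minimal one $\tau_f$ (Definition \ref{minimalob}), namely the log structure making $f\colon\underline{U}\to\underline{\X}$ strict; this exhibits \emph{enough compatible minimal objects}, which by the generalized Gillam criterion (Lemma \ref{GC}) is exactly what makes $\widetilde{\X}$ representable and fixes the essential image.

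For assertion (1) the $2$-equivalence $\mathbf{FLAS}/S\to\mathbf{ALS}/S$ requires essential surjectivity together with full faithfulness on $1$- and $2$-morphisms. Essential surjectivity is the generalization of Olsson's representability theorem (Theorem \ref{Phi_corres_Astack}), built on the fact that every algebraic log stack has enough compatible minimal objects (Theorem \ref{alglogstackminimal}): given $\Y\in\mathbf{ALS}/S$, the full subcategory $\Y^{\mathrm{min}}$ of minimal objects is fibered in groupoids over schemes, and using the strict log smooth cover together with the Asp-representable diagonal one descends an algebraic stack structure $\underline{\X}$ on $\Y^{\mathrm{min}}$ while the minimal log structures glue to a coherent $\M_{\X}$; then $\Phi^{alg}_{log}(\underline{\X},\M_{\X})\cong\Y$. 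Full faithfulness follows because a morphism $\widetilde{\X}\to\widetilde{\X'}$ over $\mathbf{Flog}$ must carry minimal objects to minimal objects, hence is both determined by and determines a log morphism $\X\to\X'$; strictness of the $2$-functor is then just the strict associativity of the composition defining $\mathrm{Hom}(U,-)$.

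For assertions (2) and (3) the unifying principle is that both the underlying algebraic-geometric structure and the log structure of $\X$ are visible through the strict log smooth cover $P$, whose underlying scheme and monoid chart are carried unchanged by $\Phi^{alg}_{log}$ to the corresponding cover of $\widetilde{\X}$. Since every property in the two lists is smooth- or fppf-local on source and target, I would reduce each to this cover: the scheme-theoretic properties (Noetherian, regular, normal, $S_n$, Cohen--Macaulay, reduced, characteristic $p$, quasi-compact, quasi-separated) are read off from the common underlying scheme and hence transfer in both directions (Theorems \ref{Phi_presevescheme}, \ref{Phialgsp}, \ref{Phi_preproperty_Astack}), while the log-theoretic properties (saturated, log regular, and the morphism types strict, integral, Kummer, Cartier, log smooth, log flat in Theorem \ref{Phi_premorph_Astack}) are read off from the identification of the characteristic monoids and charts, again preserved because the minimal log structure reproduces exactly these charts on the cover.

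The main obstacle is the reconstruction step in assertion (1): recovering $(\underline{\X},\M_{\X})$ from the minimal objects of an arbitrary $\Y\in\mathbf{ALS}/S$. One must prove that $\Y^{\mathrm{min}}$ genuinely carries an algebraic stack structure — descending both smoothness and a groupoid presentation across the strict log smooth cover \emph{without} any finite-presentation hypothesis, which is where the present generalization goes beyond Olsson — and that the fiberwise minimal log structures are compatible enough to glue into a single coherent log structure. A secondary difficulty is the $2$-categorical coherence needed to upgrade the object-level bijection into a strict $2$-equivalence, namely producing natural isomorphisms $\Phi^{alg}_{log}\circ\Psi\cong\mathrm{id}$ and $\Psi\circ\Phi^{alg}_{log}\cong\mathrm{id}$ for the inverse $\Psi$ that are compatible with composition. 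Once these are established, the comparisons in (2) and (3) become routine cover-by-cover verifications.
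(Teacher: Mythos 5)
Your proposal follows essentially the same route as the paper: construct the functor as (a reformulation of) Gillam's functor, characterize its essential image by enough compatible minimal objects (Lemma \ref{GC}), prove that every algebraic log stack has enough compatible minimal objects via the strict factorization of morphisms from fine log schemes through the cover (Lemma \ref{factorbystrict_Stack}, Theorem \ref{alglogstackminimal}), and then transfer properties chart by chart (Propositions \ref{Phialgstack}, \ref{Phi_premorph_Astack}, \ref{Phi_preproperty_Astack}). You also correctly isolate the genuine technical content — the descent argument producing the strict factorization without a finite-presentation hypothesis on the stack, resting on fppf descent of fine log structures (Theorem \ref{Olsson_descent_fine_structure}) — which is exactly where the paper's proof does its real work.
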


As consequences of the corresponding above, we have:
\begin{Theorem*}(Theorem \ref{fppfrepofstack})
Let S be a fine log scheme, $F:\X\rightarrow \Y$ be a 1-morphism of stacks over $\mathbf{Flog}_{S,fppf}$ . If
\begin{enumerate}
  \item $\X$ is a log algebraic space,
  \item $F$ is Asp-representable, strict, surjective, flat and locally of finite
  presentation,
\end{enumerate}
then $\Y$ is an algebraic log stack.
\end{Theorem*}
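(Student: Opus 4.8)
The plan is to verify directly the two defining conditions of an algebraic log stack (Definition \ref{DEF_Alglogstack}) for $\Y$: that its diagonal is Asp-representable (Definition \ref{DEF_ab_rep_Asp}), and that it admits a strict log smooth cover by a fine log scheme. Throughout, the essential leverage is that $F$ is \emph{strict}: this is what lets the logarithmic problem be decoupled into its underlying algebraic counterpart plus a descent of fine log structures, so that the classical theory can be invoked on the underlying objects.

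First I would dispose of the diagonal by pure descent. Base-changing $\Delta_\Y\colon\Y\to\Y\times_S\Y$ along $F\times_S F\colon\X\times_S\X\to\Y\times_S\Y$ produces $\X\times_\Y\X$, which is a log algebraic space because $F$ is Asp-representable and $\X$ is a log algebraic space. Since $F\times_S F$ is a strict, flat, surjective, locally finitely presented cover, and since Asp-representability descends along strict fppf covers (the log analogue of descent of representability among the fundamental results alluded to in the introduction), one concludes that $\Delta_\Y$ is Asp-representable. This settles condition (1).

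The construction of a chart is the heart of the matter. Choosing a strict log smooth surjective cover $U\to\X$ of the log algebraic space $\X$ by a fine log scheme and composing with $F$, I may replace $F$ by a strict, flat, surjective, locally finitely presented morphism $g\colon U\to\Y$ from a fine log scheme; with the diagonal already in hand, $g$ is automatically Asp-representable, so $R:=U\times_\Y U$ is a log algebraic space and the two projections $s,t\colon R\to U$ are strict, flat and locally of finite presentation. As $\Y$ is an fppf stack and $g$ is a surjective cover, $\Y\cong[U/R]$. Strictness of $s,t$ now lets me pass to the underlying groupoid $\underline{R}\rightrightarrows\underline{U}$ in algebraic spaces, whose structure maps remain flat and locally of finite presentation; by the classical theorem that a flat, locally finitely presented, surjective cover by an algebraic space exhibits a stack as algebraic (i.e. the slicing / flat-to-smooth improvement of a flat presentation), the quotient $\underline{[U/R]}$ is an algebraic stack and carries a smooth surjective cover $\underline{V}\to\underline{[U/R]}$ by a scheme.

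Finally I would lift everything back to the logarithmic level. The fine log structures on $U$ and $R$ constitute a descent datum for the smooth, hence fppf, cover $\underline{U}\to\underline{[U/R]}$, and fppf descent of fine log structures produces a canonical fine log structure on the quotient; this realizes $\Y$ as $\Phi^{alg}_{log}$ of a log algebraic stack, whence $\Y\in\mathbf{ALS}$ by Theorem \ref{Phi_corres_Astack}. Concretely, pulling this log structure back along $\underline{V}$ makes $V$ a fine log scheme and the induced morphism $V\to\Y$ strict with smooth underlying morphism, hence strict log smooth and surjective, which is exactly condition (2). The main obstacle is precisely this last passage from flat to smooth carried out compatibly with the log data: the hard part is to ensure the slicing construction yields a chart that is \emph{simultaneously} strict and log smooth, which I expect to handle by observing that strictness reduces log smoothness to smoothness of the underlying morphism and that fineness and coherence of the log structure are preserved under the fppf descent used above.
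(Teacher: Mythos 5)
Your proposal is essentially workable, but it takes a genuinely different route from the paper's. The paper's argument is short and top-down: by Theorem \ref{alglogstackminimal} the hypotheses on $F$ already guarantee that $\Y$ has enough compatible minimal objects, so by the Gillam correspondence (Corollary \ref{logGC}) one writes $\Y=\Phi_{log}\Y'$ for a fine log stack $\Y'$ over $\mathbf{Sch}_{\underline{S}}$, descends $F$ to $f'\colon\X'\rightarrow\Y'$, transfers the hypotheses to $f'$, and invokes the classical bootstrap theorem to conclude that $\underline{\Y'}$ is an algebraic stack; the Asp-representable diagonal and the strict log smooth chart for $\Y$ then come for free from Proposition \ref{Phialgstack}. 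You instead verify the two axioms of Definition \ref{DEF_Alglogstack} directly, treating the diagonal by descent along $F\times_S F$ and the chart by forming the groupoid $U\times_{\Y}U\rightrightarrows U$, applying the classical bootstrap to the underlying groupoid, and descending the fine log structure to the quotient. Both arguments bottom out in the same classical bootstrap theorem; the difference is the mechanism for producing an ``underlying'' algebraic object --- the paper uses the strict-factorization/minimal-objects machinery (Lemma \ref{factorbystrict_Stack}), while you use descent of fine log structures (Theorem \ref{Olsson_descent_fine_structure} and its stack version) along the smooth cover $\underline{U}\rightarrow[\underline{U}/\underline{R}]$. Your version is closer to the proof one would write for ordinary stacks, but it is longer once the implicit steps are filled in.

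Two of those implicit steps deserve explicit attention. First, your assertion that Asp-representability of $\Delta_{\Y}$ descends along the strict fppf cover $F\times_S F$ is itself a bootstrap statement at the level of algebraic log spaces (a sheaf on $\mathbf{Flog}_S$ admitting an Asp-representable, strict, surjective, flat, locally finitely presented cover by an algebraic log space is an algebraic log space); this is true and derivable from Theorem \ref{Phi_corres_Asp}, Theorem \ref{alglogstackminimalweak} and the classical space-level bootstrap, with no circularity since only Section 3 is used, but it is stated nowhere in the paper and cannot simply be cited, so you must supply it. Second, the identification $\Y\simeq[U/R]\simeq\Phi_{log}\bigl([\underline{U}/\underline{R}],\M\bigr)$ after descending the log structure rests on the compatibility $\Phi_{log}[U/R]=\Phi_{log}U/\Phi_{log}R$, which the paper only asserts by ``abstract nonsense'' before Theorem \ref{stackgroupiod}; here you should make explicit that strictness of $s,t$ is exactly what turns the log structures on $U$ and $R$ into a descent datum. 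Neither point breaks the argument, but as written they are gaps in the justification rather than citations of available results.
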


\begin{Theorem*}(Theorem \ref{stackgroupiod})
Let $S$ be a fine log scheme and $X$ be an algebraic log stack over $S$. $f : U\rightarrow X$ is a surjective strict log smooth morphism where $U$ is an algebraic log space over $S$. Let $(U; R; s; t; c)$ be the associated groupoid in log algebraic spaces and $f_{can}:[U/R]\rightarrow X$ be the associated map. Then
\begin{enumerate}
  \item the morphisms $s$, $t$ are strict log smooth, and
  \item the 1-morphism $f_{can}:[U/R]\rightarrow X$ is an equivalence.
\end{enumerate}
\end{Theorem*}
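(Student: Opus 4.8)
The plan is to reduce the whole statement to the corresponding classical fact about algebraic stacks by transporting the situation across the $2$-equivalence $\Phi^{alg}_{log}\colon \mathbf{FLAS}/S\to\mathbf{ALS}/S$ of Theorem \ref{Phi_corres_Astack}. Let $\Psi$ denote a chosen quasi-inverse and set $\mathcal{X}:=\Psi(X)$, $\mathcal{U}:=\Psi(U)$ and $\varphi:=\Psi(f)$. By Theorems \ref{Phialgsp} and \ref{Phi_premorph_Astack}, which assert that $\Phi^{alg}_{log}$ reflects each of the relevant properties, $\mathcal{X}$ is a log algebraic stack, $\mathcal{U}$ is a log algebraic space, and $\varphi$ is strict, log smooth and surjective. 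Since an equivalence of $2$-categories preserves $2$-fiber products, $\Psi$ carries the groupoid $(U;R;s;t;c)$ to a groupoid $(\mathcal{U};\mathcal{R};\tilde s;\tilde t;\tilde c)$ with $\mathcal{R}=\mathcal{U}\times_{\mathcal{X}}\mathcal{U}$, and matches $[U/R]$ together with $f_{can}$ to $[\mathcal{U}/\mathcal{R}]$ together with its canonical map $\tilde f_{can}\colon[\mathcal{U}/\mathcal{R}]\to\mathcal{X}$. It therefore suffices to prove both assertions on the side of log algebraic stacks and then apply $\Phi^{alg}_{log}$ back.

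For (1), I would observe that the source map $\tilde s\colon\mathcal{R}\to\mathcal{U}$ fits into a Cartesian square exhibiting it as the base change of $\varphi$ along $\varphi$ itself, and symmetrically for $\tilde t$. As strictness and log smoothness are each stable under base change, both $\tilde s$ and $\tilde t$ are strict log smooth; applying $\Phi^{alg}_{log}$ together with Theorem \ref{Phi_premorph_Astack} returns the conclusion that $s$ and $t$ are strict log smooth.

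For (2), the key simplification is the strictness of $\varphi$: the log structure of $\mathcal{U}$ equals $\varphi^{*}M_{\mathcal{X}}$, the underlying algebraic space of $\mathcal{R}$ is $\underline{\mathcal{U}}\times_{\underline{\mathcal{X}}}\underline{\mathcal{U}}$ with the pulled-back log structure, and the underlying stack of $[\mathcal{U}/\mathcal{R}]$ is the classical quotient $[\underline{\mathcal{U}}/\underline{\mathcal{R}}]$. Because $\varphi$ is strict log smooth and surjective, its underlying morphism $\underline{\varphi}\colon\underline{\mathcal{U}}\to\underline{\mathcal{X}}$ is smooth and surjective, so the standard presentation theorem for algebraic stacks (\cite{dejong}) shows that the underlying canonical morphism $[\underline{\mathcal{U}}/\underline{\mathcal{R}}]\to\underline{\mathcal{X}}$ is an equivalence. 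Since all the log structures in sight descend compatibly from that of $\mathcal{X}$, the map $\tilde f_{can}$ is strict and is an isomorphism on underlying stacks, hence an equivalence of log algebraic stacks. Transporting back through $\Phi^{alg}_{log}$ shows that $f_{can}$ is an equivalence, completing the proof of Theorem \ref{stackgroupiod}.

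The main obstacle I anticipate is the bookkeeping asserted at the end of the first paragraph: that $\Psi$ is compatible not merely with $2$-fiber products, which is automatic for an equivalence, but with the formation of the quotient stack $[U/R]$, which is a stackification rather than a finite limit. This requires checking that both $\mathbf{FLAS}/S$ and $\mathbf{ALS}/S$ are closed under quotients of groupoids with strict log smooth source and target maps, and that the equivalence, being characterized through minimal objects (Definition \ref{minimalob}, Lemma \ref{GC}), carries one quotient presentation to the other. Once this compatibility is established, the remainder is a routine transfer between the log and underlying levels together with the classical theory of algebraic stacks.
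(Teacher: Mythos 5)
Your proposal is correct and follows essentially the same route as the paper: transfer the statement across the $2$-equivalence of Theorem \ref{Phi_corres_Astack} to the side of log algebraic stacks, invoke the classical presentation theorem from \cite{dejong}, and use descent of fine log structures along the strict smooth cover (the compatibility $\Phi_{log}[U/R]=\Phi_{log}U/\Phi_{log}R$ that you flag as the main obstacle is exactly what the paper disposes of by ``abstract nonsense'' in the paragraph preceding the theorem). Your write-up simply supplies the details that the paper's one-line proof leaves implicit.
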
\noindent
\textbf{Remark:} If the morphism $f:U\rightarrow X$ is only assumed surjective, strict, flat and locally of finite presentation, then it will still be the case that $f_{can}:[U/R]\rightarrow X$ is an equivalence. In this case the morphisms $s$, $t$ will be strict, flat and locally of finite presentation, but of course not smooth in general.

\begin{Theorem*}(Theorem \ref{groupoidstack})
Let $S$ be a fine log scheme and $(U; R; s; t; c)$ be a log smooth groupoid in algebraic log spaces over $S$. Then the quotient stack $[U/R]$ is an algebraic log stack over $S$.
\end{Theorem*}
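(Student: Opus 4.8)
The plan is to realize the canonical projection $\pi\colon U\to[U/R]$ as a presentation and then to invoke Theorem \ref{fppfrepofstack}, which is tailored for exactly this situation: it reduces the whole statement to exhibiting a $1$-morphism from a log algebraic space to $[U/R]$ that is Asp-representable, strict, surjective, flat and locally of finite presentation. Taking $\X=U$ (an algebraic log space, the required input of Theorem \ref{fppfrepofstack}) and the morphism to be $\pi$, the argument splits into two parts: first checking that $[U/R]$ is a stack for the fppf topology on $\mathbf{Flog}_S$, and then verifying that $\pi$ carries the five listed properties.

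For the stack axioms I would recall the standard groupoid description of the quotient: an object over a fine log scheme $T$ is an fppf $R$-torsor $P\to T$ equipped with an $R$-equivariant log morphism $P\to U$, and a morphism is an $R$-equivariant isomorphism of such torsors over $U$. Gluing of objects and of morphisms along fppf covers of $T$ then follows from fppf descent for log morphisms into the algebraic log spaces $U$ and $R$, which is the space-level instance of descent already available in this setting. This part is routine and I would only sketch it.

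The heart of the proof is the analysis of $\pi$. The key structural fact is the $2$-cartesian identity $U\times_{[U/R]}U\cong R$ realized by $(s,t)$, and more generally, for any $T\to[U/R]$ classifying a torsor $P$, a canonical isomorphism $T\times_{[U/R]}U\cong P$. Since $P\to T$ is an fppf $R$-torsor and $R$, $U$ are algebraic log spaces, $P$ is again an algebraic log space (it is obtained by descent from a base change of $s\colon R\to U$ along the cover trivializing the torsor), which yields Asp-representability of $\pi$. The remaining four properties are stable under base change and fppf-local on the target, so they may be tested after the faithfully flat strict base change $P\to T$, where $\pi$ pulls back to a twist of the projection $s$. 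Recalling that in a log smooth groupoid the source and target maps $s,t$ are strict, surjective, flat and locally of finite presentation, these properties then transfer to $\pi$, and Theorem \ref{fppfrepofstack} gives the conclusion at once.

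The step I expect to be the main obstacle is precisely the representability of torsors: showing that an fppf $R$-torsor $P\to T$ in $\mathbf{Flog}_S$ is represented by an algebraic log space, together with the descent of the log-geometric properties ``strict'', ``flat'' and ``locally of finite presentation'' along the faithfully flat cover trivializing $P$. Both rest on the foundational fppf-descent results in the category of fine log schemes, which are supplied here by the dictionary of Theorem \ref{Phi_premorph_Astack} — that $\Phi^{alg}_{log}$ preserves and reflects exactly these properties — combined with classical descent for algebraic spaces. Once representability and descent are in hand, no further work is needed, and I would close by remarking that the result is the expected converse to Theorem \ref{stackgroupiod}, completing the equivalence between algebraic log stacks and strict log smooth groupoids in algebraic log spaces.
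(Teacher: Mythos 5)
Your argument is essentially correct, but it takes a genuinely different route from the paper. The paper's proof is a two-line transfer: by Theorem \ref{Phi_corres_Asp}/\ref{Phi_corres_Astack} the strict log smooth groupoid $(U;R;s;t;c)$ is $\Phi_{log}$ of a smooth groupoid $(U_0;R_0;s_0;t_0;c_0)$ in fine log algebraic spaces; since $s_0,t_0$ are strict, the underlying groupoid is a smooth groupoid in algebraic spaces, so the Stacks Project quotient theorem makes $[\underline{U_0}/\underline{R_0}]$ an algebraic stack, the fine log structure descends to it by the remark after Theorem \ref{Olsson_descent_fine_structure}, and applying $\Phi_{log}$ gives the claim. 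You instead stay entirely on the $\mathbf{Flog}_S$ side: you verify the stack axioms for $[U/R]$ via the torsor description and feed $\pi\colon U\to[U/R]$ into the bootstrap Theorem \ref{fppfrepofstack}. This is not circular (Theorem \ref{fppfrepofstack} is proved independently via minimal objects), and your identification of the crux is accurate: the whole content sits in showing that an fppf $R$-torsor $P\to T$ is an algebraic log space. Two points deserve emphasis there. First, the trivializing cover $T'\to T$ is automatically \emph{strict} fppf, since that is the topology on $\mathbf{Flog}_S$, so $P\times_TT'\cong R\times_{s,U}T'$ is an algebraic log space and the descent datum is strict; effectivity then follows from classical fppf descent for algebraic spaces together with Theorem \ref{Olsson_descent_fine_structure} --- i.e.\ you are quietly re-proving the space-level bootstrap that the paper outsources to \cite{dejong} via the $\Phi_{log}$ dictionary. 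Second, you should say explicitly that $s,t$ are surjective (via the identity section $e\colon U\to R$) so that $\pi$ is surjective. What your approach buys is a self-contained argument intrinsic to $\mathbf{ALS}_S$ that exhibits $[U/R]$ together with its presentation and makes visible why Theorem \ref{fppfrepofstack} is the right converse to Theorem \ref{stackgroupiod}; what the paper's approach buys is brevity, by letting the equivalence $\Phi_{log}$ absorb all of the descent-theoretic work at once.
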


\begin{Theorem*}(Theorem \ref{DM=unramify_diagonal})
An algebraic log stack $\X$ is DM if and only if the diagonal $\Delta_{\X}$ is unramified.
\end{Theorem*}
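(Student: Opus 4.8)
The plan is to deduce the statement from the classical characterization of Deligne--Mumford stacks by transporting the problem across the $2$-equivalence $\Phi:=\Phi^{alg}_{log}$. Write $\X=\Phi(\mathcal{X})$ for the essentially unique fine log algebraic stack $\mathcal{X}=(\underline{\mathcal{X}},\M_{\mathcal{X}})\in\mathbf{FLAS}/S$ supplied by Theorem \ref{Phi_corres_Astack}. Since $\Phi$ is a strict $2$-functor preserving fiber products, it carries $\Delta_{\X}$ to $\Delta_{\mathcal{X}}$, and by Theorem \ref{Phi_premorph_Astack} the property ``unramified'' of a morphism is both preserved and reflected by $\Phi$, while ``DM'' is preserved and reflected by Theorem \ref{Phi_preproperty_Astack}. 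Hence it suffices to prove, for a fine log algebraic stack $\mathcal{X}$, that $\mathcal{X}$ is DM if and only if $\Delta_{\mathcal{X}}$ is unramified. Throughout I would exploit that a \emph{strict} morphism is log smooth (resp.\ log \'etale, resp.\ unramified) exactly when its underlying morphism is smooth (resp.\ \'etale, resp.\ unramified), so that a strict log smooth (resp.\ strict log \'etale) atlas is the datum of a smooth (resp.\ \'etale) atlas of $\underline{\mathcal{X}}$ carrying the pulled-back log structure.

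For the forward direction, suppose $\mathcal{X}$ is DM, so by definition it admits a strict log \'etale cover $p:U\to\mathcal{X}$ by a fine log scheme. Setting $R:=U\times_{\mathcal{X}}U$, the projections $s,t:R\to U$ are strict log \'etale as base changes of $p$. The map $(s,t):R\to U\times_S U$ is the base change of $\Delta_{\mathcal{X}}$ along the strict log smooth surjection $p\times p:U\times_S U\to\mathcal{X}\times_S\mathcal{X}$. First I would observe that $(s,t)$ is unramified by the cancellation property: $s$ is unramified, so $\Omega^{log}_{R/U}=0$, and the conormal sequence for $R\to U\times_S U\to U$ forces $\Omega^{log}_{R/U\times_S U}=0$ with $(s,t)$ of finite type. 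Since ``unramified'' descends along the strict log smooth surjective (hence strict fppf) cover $p\times p$, it follows that $\Delta_{\mathcal{X}}$ is unramified.

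For the reverse direction, which I expect to be the crux, assume $\Delta_{\mathcal{X}}$ is unramified and take the strict log smooth atlas $p:U\to\mathcal{X}$ guaranteed by the definition of algebraic log stack; the task is to refine it to a strict log \'etale atlas. By the same base change as above, $(s,t):R\to U\times_S U$ is unramified, so the identity section $\delta:U\to R$ (the base change of $\Delta_{\mathcal{X}}$) is an open immersion onto an open-and-closed subspace, and the fibers of the log smooth map $s$ are log \'etale along $\delta(U)$; equivalently the log stabilizers are unramified of relative dimension $0$. I would then slice: around each point $u\in U$ choose a strict locally closed log subscheme $W\hookrightarrow U$ through $u$, cut transversally to the fibers of $s$, so that $p|_W:W\to\mathcal{X}$ becomes strict log \'etale. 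Running over all points produces a strict log \'etale atlas, whence $\mathcal{X}$ is DM. Because $p$ is strict, the slice may be endowed with its pulled-back (strict) log structure, so the slicing reduces to the classical Deligne--Mumford / Laumon--Moret-Bailly argument applied to the underlying smooth atlas $\underline{U}\to\underline{\mathcal{X}}$ with unramified underlying diagonal.

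The hard part will be verifying that the unramifiedness of the \emph{log} diagonal $\Delta_{\mathcal{X}}$ --- which a priori also controls automorphisms of the log structure, not merely of the underlying object --- genuinely forces the underlying diagonal $\Delta_{\underline{\mathcal{X}}}$ to be unramified, so that the scheme-theoretic slicing applies. The technical point I would isolate as a separate lemma is that, the atlas $p$ being strict, the logarithmic contribution to $\Delta_{\mathcal{X}}$ is locally constant along $U$ and the question descends to $\Delta_{\underline{\mathcal{X}}}$; granting this, the remaining slicing is the classical one, carried out throughout with the strict pulled-back log structure, so no essentially new logarithmic phenomenon intervenes.
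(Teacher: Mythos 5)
Your overall strategy---transporting the statement across $\Phi_{log}$ to the side of fine log algebraic stacks---is the same as the paper's, but there is a genuine gap at exactly the point you flag as ``the hard part,'' and it is caused by a misstatement one step earlier. You assert that, since $\Phi$ preserves fiber products, it carries $\Delta_{\X}$ to $\Delta_{\mathcal{X}}$. This is not quite right: $\Phi_{log}$ preserves the fiber products \emph{of} $\mathbf{FLAS}_{\underline{S}}$, and those are computed by first forming the naive product in $\mathbf{LAS}_{\underline{S}}$ and then passing to its integral part (Lemma \ref{integralpart_stack}). Consequently $\Delta_{\X}$ corresponds under $\Phi_{log}$ not to $\Delta_{\mathcal{X}}\colon\mathcal{X}\rightarrow\mathcal{X}\times\mathcal{X}$ but to the factor $\delta\colon\mathcal{X}\rightarrow(\mathcal{X}\times\mathcal{X})^{int}$, whose underlying morphism of algebraic stacks has a genuinely different target from $\Delta_{\underline{\mathcal{X}}}$. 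The discrepancy between $\delta$ and $\Delta_{\mathcal{X}}=\tau\circ\delta$ is precisely the ``separate lemma'' you defer (``granting this\dots''), so as written the crux of the argument is assumed rather than proved. The paper closes this gap in one line: $\tau\colon(\mathcal{X}\times\mathcal{X})^{int}\rightarrow\mathcal{X}\times\mathcal{X}$ is a closed immersion, so $\delta$ is unramified if and only if $\tau\circ\delta=\Delta_{\mathcal{X}}$ is, and Theorem \ref{Phi_premorph_Astack} identifies unramifiedness of $\delta$ with that of $\Delta_{\X}$.

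Once that identification is in place, the rest of your proposal is unnecessary work. For $\mathcal{X}\in\mathbf{FLAS}_{\underline{S}}$, being DM is by definition a property of the underlying algebraic stack, and unramifiedness of $\Delta_{\mathcal{X}}$ is a property of the underlying diagonal $\Delta_{\underline{\mathcal{X}}}$ (it is plain unramifiedness, not log unramifiedness, so the log structure does not intervene); hence the equivalence at that stage is exactly the classical theorem for algebraic stacks and does not need to be re-derived. Your forward-direction cancellation argument and, more seriously, your sketched reverse-direction slicing argument re-prove a known statement while leaving the only genuinely logarithmic point---the integralization of the product---unaddressed. I would replace both halves by a citation of the classical result and supply the closed-immersion factorization as the actual content of the proof.
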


\begin{Corollary*}(Corollary \ref{Algspa=mono_diagonal})
For an algebraic log stack $\X$, the following are equivalent:
 \begin{enumerate}
   \item $\X$ is an algebraic log space;
   \item for every $x\in\X_U$, where $U\in\mathbf{Flog}_S$, $Aut_{\X_U}(x)=\{id_x\}$;
   \item the diagonal $\Delta_{\X}:\X\rightarrow\X\times_{\X}\X$ is fully faithful.
 \end{enumerate}
\end{Corollary*}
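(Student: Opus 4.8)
The plan is to establish the equivalences by proving $(1)\Rightarrow(2)$, $(2)\Leftrightarrow(3)$, and $(2)\Rightarrow(1)$, isolating the one genuinely geometric step and dispatching the rest by formal manipulation of fibered categories. The implication $(2)\Leftrightarrow(3)$ is pure category theory valid for any category fibered in groupoids over $\mathbf{Flog}_S$; the passage from the ``trivial automorphisms'' condition $(2)$ to algebraicity $(1)$ is where the structure theory enters, and I would route it through the equivalence $\Phi^{alg}_{log}$ to reduce to the classical statement for algebraic stacks. The direction $(1)\Rightarrow(2)$ is immediate: an algebraic log space is, as a stack over $\mathbf{Flog}_S$, fibered in setoids (its presheaf of isomorphism classes is a sheaf and each fiber category is discrete up to unique isomorphism), so for every $U\in\mathbf{Flog}_S$ and $x\in\X_U$ one has $Aut_{\X_U}(x)=\{id_x\}$.

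For $(2)\Leftrightarrow(3)$ I would compute the diagonal directly. For $U\in\mathbf{Flog}_S$ and $x,y\in\X_U$, an object of $\X\times_S\X$ over $U$ lying over $(x,y)$ is the pair $(x,y)$, and $\Delta_\X$ sends $x$ to $(x,x)$ and a morphism $f:x\to y$ to $(f,f)$. Hence on Hom-sets $\Delta_\X$ induces the injection $Hom_{\X_U}(x,y)\hookrightarrow Hom_{\X_U}(x,y)\times Hom_{\X_U}(x,y)$, $f\mapsto(f,f)$, which is bijective precisely when $Hom_{\X_U}(x,y)$ has at most one element. Thus $\Delta_\X$ is fully faithful iff every such Hom-set is empty or a singleton; since the fibers are groupoids, this is equivalent to $Aut_{\X_U}(x)=\{id_x\}$ for all $x$ (if all automorphism groups are trivial, a nonempty $Hom_{\X_U}(x,y)$ is a torsor under $Aut_{\X_U}(x)$, hence a singleton, and the converse is immediate from $Aut_{\X_U}(x)=Hom_{\X_U}(x,x)\ni id_x$).

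The substantive step is $(2)\Rightarrow(1)$. By Theorem \ref{Phi_corres_Astack} the 2-functor $\Phi^{alg}_{log}$ is a strict 2-equivalence $\mathbf{FLAS}/S\xrightarrow{\sim}\mathbf{ALS}/S$, so I would write $\X\simeq\Phi^{alg}_{log}\X'$ for a fine log algebraic stack $\X'\in\mathbf{FLAS}/S$. Since the equivalence is fully faithful on fiber categories, it preserves and reflects automorphism groups, so condition $(2)$ transports to the statement that the underlying algebraic stack $\underline{\X'}$ has trivial automorphisms. By the classical characterization of algebraic spaces among algebraic stacks, $\underline{\X'}$ is then an algebraic space, and equipped with its fine log structure $\X'$ is a fine log algebraic space. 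Applying the preservation statement across $\Phi^{alg}_{log}$ (item (2) of the main theorem, Theorem \ref{Phialgsp}) then shows $\X\simeq\Phi^{alg}_{log}\X'$ is an algebraic log space, closing the cycle.

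The main obstacle I anticipate is precisely this last implication, specifically making the reduction across $\Phi^{alg}_{log}$ rigorous: one must record that the ``trivial automorphisms'' condition is genuinely detected by the equivalence (i.e. that $\Phi^{alg}_{log}$ induces isomorphisms on the relevant $Isom$/automorphism sheaves, which follows from its being a strict 2-equivalence but deserves an explicit remark), and that the classical passage from an algebraic stack with trivial automorphisms to an algebraic space is compatible with the fine log structure so that the output lies in $\mathbf{FLAS}$ rather than merely carrying an abstract log structure. A more self-contained alternative that bypasses $\Phi^{alg}_{log}$ would take a strict log smooth cover $U\to\X$ by a fine log scheme, note that $(2)$ forces $R=U\times_\X U\to U\times_S U$ to be a monomorphism, and identify the quotient sheaf $[U/R]$ with $\X$ as an algebraic log space; this, however, requires reproving the descent of the log structure, which is exactly what the equivalence already packages.
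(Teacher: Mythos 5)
Your outline of $(1)\Rightarrow(2)$ and $(2)\Leftrightarrow(3)$ matches the paper, which dismisses these as ``clear'' and ``abstract nonsense''; your explicit computation of the diagonal on Hom-sets is fine. For the substantive implication, however, you take a genuinely different route from the paper, and your route has a gap at its central step. The paper proves $(3)\Rightarrow(1)$ directly inside $\mathbf{ALS}_S$: fully faithful implies unramified, so by Theorem \ref{DM=unramify_diagonal} the stack $\X$ is DM and has a strict log \'etale cover by a fine log scheme; representability of $\Delta_\X$ is then obtained by noting that for any $U\rightarrow\X\times_S\X$ the monomorphism $\X\times_{\X\times_S\X}U\rightarrow U$ is separated, quasi-finite and locally of finite presentation, hence quasi-affine, hence a fine log scheme. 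You instead transport condition $(2)$ across $\Phi^{alg}_{log}$ to the underlying algebraic stack $\underline{\X'}$ and invoke the classical ``trivial automorphisms $\Rightarrow$ algebraic space'' criterion.

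The gap is in the sentence ``since the equivalence is fully faithful on fiber categories, it preserves and reflects automorphism groups.'' Strict full faithfulness of $\Phi$ (Proposition \ref{AB_Phi}) is a statement about the Hom-\emph{categories} of $1$-morphisms between stacks, not about the fiber categories of a single stack over the two different base sites, and those fiber categories are genuinely different. Concretely, an automorphism $a$ of $x\in\X'_{\underline{U}}$ corresponds to the automorphism $(a,\M a)$ of the minimal object $(x,id_{\M x})$ of $\Phi_{log}\X'$, which lies over the morphism $\M a$ of $\mathbf{Flog}_S$ and hence belongs to the fiber category $(\Phi_{log}\X')_{\M x}$ only when $\M a=id$. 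So condition $(2)$ a priori only forces triviality of those automorphisms of $x$ that act trivially on the log structure $\M x$, which is weaker than what the classical criterion for $\underline{\X'}$ requires. The implication can be salvaged --- given $a\neq id_x$, push $\M x$ out along the coequalizer of $id$ and $\M a$ (then integralize) to produce a fine log scheme $U'$ over $\underline{U}$ and an object $(x,h)\in(\Phi_{log}\X')_{U'}$ admitting the nontrivial automorphism $(a,id_{U'})$, contradicting $(2)$ --- but this extra argument is exactly what your proof is missing, and you only flag it as a concern rather than supplying it. You would also still need to check, as the paper does, that the resulting fine log algebraic space yields an \emph{algebraic log space} with representable (not merely Asp-representable) diagonal, which is where the quasi-affineness argument enters.
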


\begin{Corollary*}(Corollary \ref{criterion_Asprep})
A morphism $f:\X\rightarrow\Y$ in $\mathbf{ALS}_S$ is Asp-representable if and only if $\Delta_f:\X\rightarrow\X\times_{\Y}\X$ is a monomorphism.
\end{Corollary*}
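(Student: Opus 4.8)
The plan is to reduce the statement to the absolute characterization of algebraic log spaces already recorded in Corollary \ref{Algspa=mono_diagonal}, exactly as one does in the classical theory of algebraic stacks. The first step is to reinterpret the condition on the relative diagonal. Recall that a monomorphism in the $2$-strict category $\mathbf{ALS}_S$ means a $1$-morphism which is fully faithful (equivalently, one whose own diagonal is an equivalence). Unwinding definitions, a $U$-object of $\X\times_{\Y}\X$ is a triple $(a,b,\phi)$ with $a,b\in\X_U$ and $\phi\colon f(a)\xrightarrow{\sim}f(b)$ in $\Y_U$, and $\Delta_f$ sends $a\mapsto(a,a,\mathrm{id})$. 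A direct computation of Hom-sets shows that $\mathrm{Hom}_{(\X\times_{\Y}\X)_U}(\Delta_f a,\Delta_f a')$ is the set of pairs $(g,h)$ of morphisms $a\to a'$ in $\X_U$ with $f(g)=f(h)$, and that $\Delta_f$ acts by $g\mapsto(g,g)$. Faithfulness of $\Delta_f$ is then automatic, while fullness of $\Delta_f$ holds precisely when $f(g)=f(h)$ forces $g=h$. Hence $\Delta_f$ is a monomorphism if and only if $f$ is faithful as a morphism of fibred categories over $\mathbf{Flog}_S$, and it suffices to prove that $f$ is Asp-representable if and only if $f$ is faithful.

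Before the two implications I will record the base-change stability I need: for any algebraic log space $U$ (in particular any fine log scheme) and any $1$-morphism $U\to\Y$, the $2$-fibre product $\X\times_{\Y}U$ is again an algebraic log stack. This follows from the existence of $2$-fibre products in $\mathbf{ALS}_S$ together with the groupoid presentations of Theorem \ref{groupoidstack}, and it is exactly what lets me apply Corollary \ref{Algspa=mono_diagonal} to $\X\times_{\Y}U$. For the implication ``faithful $\Rightarrow$ Asp-representable'', I fix such a $U\to\Y$ and set $\X_U:=\X\times_{\Y}U$. Since $U$ is an algebraic log space, each of its objects has trivial automorphisms, so an automorphism of an object $(a,\psi)\in(\X_U)_T$ is an automorphism $g$ of $a$ whose image $f(g)$ is compatible with the identity on the $U$-factor, i.e.\ $f(g)=\mathrm{id}=f(\mathrm{id}_a)$; faithfulness of $f$ gives $g=\mathrm{id}_a$. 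Thus $\X_U$ is an algebraic log stack all of whose objects have trivial automorphism groups, and Corollary \ref{Algspa=mono_diagonal} identifies it with an algebraic log space. As $U\to\Y$ was arbitrary, $f$ is Asp-representable.

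For the converse I assume $f$ is Asp-representable and take morphisms $g,h\colon a\to a'$ in a fibre $\X_U$ with $f(g)=f(h)$. Base-changing along the $1$-morphism $b:=f(a')\colon U\to\Y$ produces the algebraic log space $\X_b:=\X\times_{\Y}U$. The pairs $(a,f(g))=(a,f(h))$ and $(a',\mathrm{id}_b)$ are objects of $(\X_b)_U$, and both $g$ and $h$ define morphisms $(a,f(g))\to(a',\mathrm{id}_b)$ in $\X_b$. But the fibre categories of an algebraic log space are setoids, so there is at most one morphism between any two given objects (again by Corollary \ref{Algspa=mono_diagonal}); therefore $g=h$, and $f$ is faithful. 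Combined with the first paragraph this proves both directions.

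I expect the only genuine difficulty to be bookkeeping rather than substance. The two delicate points are verifying that ``monomorphism of the relative diagonal'' is literally equivalent to faithfulness in the fibred setting over $\mathbf{Flog}_S$ (the Hom-set computation of the first paragraph, done carefully with the log data), and confirming that base change along $U\to\Y$ does not leave $\mathbf{ALS}_S$, so that Corollary \ref{Algspa=mono_diagonal} applies to $\X\times_{\Y}U$. Once these are in place the argument is purely formal, because Corollary \ref{Algspa=mono_diagonal} already carries the hard geometric input, namely that an algebraic log stack with trivial automorphism groups is an algebraic log space.
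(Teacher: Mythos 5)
Your argument is correct, and it is precisely the reduction the paper intends: the corollary is stated without proof immediately after Corollary \ref{Algspa=mono_diagonal}, and your chain (monomorphism of $\Delta_f$ $\Leftrightarrow$ $f$ faithful $\Leftrightarrow$ every $\X\times_{\Y}U$ has trivial automorphism groups $\Leftrightarrow$ every $\X\times_{\Y}U$ is an algebraic log space) is the standard way to deduce it. The base-change stability you flag is already supplied by the paper's lemma that a $2$-fibre product of algebraic log stacks is again an algebraic log stack, so no appeal to Theorem \ref{groupoidstack} is needed there.
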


Although our main interest is the representability of algebraic log stack, we have to establish the fundamental results from the beginning. Most of the notions and results in this paper are natural, but lack of references.

The paper is organized as follows:

In section 2, we fix some categorical notions and generalize Gillam's functor \cite{Gi1} for our use. The main result is Proposition \ref{abGC}. As a corollary, given a fine log scheme $S$, we prove that there is a correspondence between stacks over $\mathbf{Sch}_{\underline{S}}$ with log structures and stacks over $\mathbf{Flog}_S$ with enough compatible minimal objects (Corollary \ref{logGC}).

In section 3, we introduce two definitions on log moduli algebraic spaces and morphisms in both categories. We show that the two categories of fine log moduli algebraic spaces are equivalent (Theorem \ref{Phi_corres_Asp}), and the equivalence respects various properties (Proposition \ref{Phi_premorph_Asp}, Proposition \ref{Phi_preproperty_Asp}).

In section 4, we introduce various definitions on log moduli stacks and morphisms in both categories. We show that the two categories of fine log moduli stack are equivalent (Theorem \ref{Phi_corres_Astack}), and the equivalence respects various properties (Proposition \ref{Phi_premorph_Astack}, Proposition \ref{Phi_preproperty_Astack}).

In section 5, we state several fundamental results in algebraic log stacks, as an application of the correspondence established in section 3 and 4. In particular, we prove the bootstrapping theorem on algebraic log stacks (Theorem \ref{fppfrepofstack}), the theorem on presentation of algebraic log stack by groupoid in log algebraic spaces (Theorem \ref{stackgroupiod}, Theorem \ref{groupoidstack} ), the criterion for an algebraic log stack to be DM (Theorem \ref{DM=unramify_diagonal}) or to be algebraic log space (Corollary \ref{Algspa=mono_diagonal}) and the criterion for morphism to be Asp-representation (Corollary \ref{criterion_Asprep}).

\textbf{Notations:}

\begin{itemize}
  \item We use log (structure, scheme, stack. etc.) to mean coherent log (structure, scheme, stack. etc.), unless otherwise mentioned.
  \item The log structure in this paper is always given on fppf topology (for fine log structure, it's equivalent to use \'etale topology if we consider DM-stack, by Theorem A.1 in \cite{Ol1}).
  \item We use capital letters $X$, $Y$, $S$ etc. to denote geometrical objects with no nontrivial automorphisms (e.g. algebraic log spaces or log schemes), and $\X$, $\mathcal{Y}$ otherwise (e.g. log algebraic stacks or generally groupoid fibered categories). We use underlined letters $\underline{X}$, $\underline{\X}$, etc. to denote the underlining spaces (stacks) and $M_X$ or $\M_\X$ to denote the log structures.
  \item Given a fine log scheme $S$, we denote $\mathbf{Flog}_S$ the category of fine log schemes over $S$, and $\mathbf{Clog}_S$ the category of coherent log schemes over $S$.
  \item We use \emph{stack} to mean groupoid fibration with all descent data effective, and use \emph{general stack} to mean fibration (not necessarily fibered in groupoid) with all descent data effective, although the latter is called `stack' in \cite{Vistoli}. We use $\simeq$ to denote the equivalence between objects in a strict 2-category. More categorical notions are introduced in section 2.
\end{itemize}

\section{Generalized Gillam's Functor}

In this section, we generalize Gillam's functor in \cite{Gi1}. First we recall some notions of categories.

\subsection{Categorical Notions}

\begin{Definition}\label{DEF_2-category}
A strict 2-category $\mathcal{C}$ is $(\mathcal{C}, \circ, \cdot, \star)$ consists of the following:
\begin{description}
  \item[a] a set of objects $Obj\mathcal{C}$;
  \item[b] for each $x, y\in\mathcal{C}$, A category $\mathcal{C}(x,y)$. The objects of $\mathcal{C}(x,y)$ are called 1-morphisms of $\mathcal{C}$. The morphisms of $\mathcal{C}(x,y)$ are called 2-morphisms of $\mathcal{C}$. We use $\cdot$ to denote the composition of 2-morphisms in $\mathcal{C}(x,y)$.
  \item[c] for each $x,y,z\in\mathcal{C}$, a composition functor
  $$m_{x,y,z}=(\circ, \star): \mathcal{C}(x,y)\times\mathcal{C}(y,z)\rightarrow\mathcal{C}(x,z)$$ sending object $(f,g)\in Obj(\mathcal{C}(x,y)\times\mathcal{C}(y,z))$ to $f\circ g\in Obj(\mathcal{C}(x,z))$ (in practice we always use $gf$ instead of $f\circ g$), and morphism $(\alpha, \beta)$ in $\mathcal{C}(x,y)(f_1,g_1)\times\mathcal{C}(y,z)(f_2,g_2)$ to $\alpha\star\beta\in\mathcal{C}(x,z)(f_1\circ f_2,g_1\circ g_2)$.
  \item[d] for each $x\in Obj\mathcal{C}$, a distinguished identity object $id_x\in\mathcal{C}(x,x)$.
\end{description}
satisfying:
\begin{enumerate}
  \item $\forall x,y,z,w\in Obj\mathcal{C}$, $m_{x,z,w}(m_{x,y,z}\times id_{\mathcal{C}(z,w)})=m_{x,y,w}(id_{\mathcal{C}(x,y)}\times m_{y,z,w})$;
  \item $\forall x,y\in Obj\mathcal{C}$ and 1-morphism $f\in \mathcal{C}(x,y)$, $id_x\circ f=f\circ id_y=f$;
  \item $\forall x,y\in Obj\mathcal{C}$ and 2-morphism $\alpha\in \mathcal{C}(x,y)$, $id_{id_x}\star\alpha=\alpha\star id_{id_y}=\alpha$;
\end{enumerate}
two objects $x,y\in Obj\C$ are said to be equivalent if there are 1-morphisms $f:x\rightarrow y$, $g:y\rightarrow x$ and 2-isomorphisms $\alpha:gf\simeq id_x$, and $\beta:fg\simeq id_y$.
\end{Definition}
\textbf{Remark:} There is a wider notion 'bicategory', but we won't need this.

A typical example of strict 2-category is the category of small categories $(\mathbf{Cat},\circ_{\mathbf{Cat}},\cdot_{\mathbf{Cat}},\bullet_{\mathbf{Cat}})$.
\begin{Definition}\label{DEF_2-functor}
A strict 2-functor $T:\C_1\rightarrow\C_2$ between two strict 2-categories consists of the following:
\begin{description}
  \item[a] a map $T:Obj(\C_1)\rightarrow Obj(\C_2)$;
  \item[b] for every $x,y\in\C_1$, a functor $T:\C_1(x,y)\rightarrow\C_2(Tx,Ty)$.
\end{description}
satisfying: $T(id_x)=id_{Tx}$, $T(id_{id_x})=id_{id_{Tx}}$, $T(f\circ g)=T(f)\circ T(g)$, $T(\alpha\star\beta)=T(\alpha)\star T(\beta)$. Moreover, $T$ is \emph{strictly fully faithful} if for any $x,y\in\C_1$, $T:\C_1(x,y)\rightarrow\C_2(Tx,Ty)$ is an isomorphism.
$y\in Obj\C_2$ is said to be \emph{in the essential image of $T$} if there is an object $x\in Obj\C_1$, s.t. $Tx$ is equivalent to $y$.
$T$ is said to be \emph{strict equivalent} if $T$ is strict fully faithful and every object in $\C_2$ is in the essential image of $T$.
\end{Definition}
\textbf{Remarks:} $T$ is a strict equivalence if and only if there is an 'inverse' pseudo-functor. However, we won't mention the definition of \emph{`inverse' pseudo-functor} for the complexity of 2-diagrams, and the ad hoc definition of strict equivalence above is enough for our purpose.

\subsection{Abstract Generalized Gillam's functor}

\begin{Setting}\
\begin{description}
  \item[S1] Fix a triple $(\C,\FL,\CL)$ with fibered category $\CL\rightarrow\C$ ($(f:x\rightarrow y)\mapsto(\underline{f}:\underline{x}\rightarrow\underline{y})$), with the associate groupoid fibration $\CL^\ast\rightarrow\C$, and a full sub-fibration $\FL\rightarrow\C$ with the associate groupoid fibration $\FL^\ast\rightarrow\C$;
  \item[S2] The inclusion functor $i:\FL\rightarrow\CL$ has a right adjoint $int:\CL\rightarrow\FL$. Both preserve cartesian arrows;
  \item[S3] $\C$, $\CL$ have fiber products, and the functor $\CL\rightarrow\C$ preserve fiber products;
  \item[S4] $\C$ is endowed with a Grothendieck topology $\tau$. We denote the associate site $\C_\tau$. Let $\CL_\tau$ (resp. $\FL_\tau$) be the site with pullback topology of $\tau$ through $\CL\rightarrow\C$ (resp. $\FL\rightarrow\C$), i.e. a covering in $\CL$ (resp. $\FL$) is $\{c_i\rightarrow c\}$ s.t. $\{\underline{c_i}\rightarrow \underline{c}\}$ is a covering in $\C_\tau$. $\FL\rightarrow\C$ is a general stack, and for every $x\in\CL$, the presheaf
      $$h_x(u)=\CL(u,x)$$ $u\in\FL$
      is a sheaf over $\FL$.
\end{description}
\end{Setting}
  Although we do abstractly in this section, the main situation we are interested in is $\mathbf{CLog}_S\rightarrow \mathbf{Sch}_{\underline{S}}$ with sub-fibration $\mathbf{FLog}_S\rightarrow \mathbf{Sch}_{\underline{S}}$ where $S$ is a log scheme.

\textbf{Remark about the setting:} For application one may want to consider fine log structures throughout the whole paper (log structure that is not fine is too much pathological). However, the difficulty arises when we take fiber product of fine log algebraic stack, which is not compatible with the fiber product of base algebraic stacks. This phenomena already appears in the category of log scheme, that is why K.Kato introduced the functor $int$ in \cite{KKato}, to make coherent log scheme integral. However, things works better in the larger category of coherent log schemes. So we introduce $\mathbf{CLog}_S$.

We form the strict 2-category $\mathbf{LogCat}/\C$ of categories over $\C$ with log structures:
\begin{itemize}
  \item the objects are pairs $(T, \M_{\X})$ of funcotrs $T:\X\rightarrow\C$, $\M_{\X}:\X\rightarrow\CL^\ast$ s.t. $T=\underline{\M_{\X}}$. $\M_{\X}$ is called the \emph{log structure} of $(T, \M_{\X})$. We use $\M_{\X}$ as Abbreviation of $(T, \M_{\X})$. $\M_{\X}$ is a fine log structure if $\M_{\X}$ factors through $\FL^\ast$;
  \item A 1-morphism is $$(F,F^\dag):(\M_{\X}:\X\rightarrow\CL)\rightarrow (\M_{\Y}:\Y\rightarrow\CL)$$ where $F:\X\rightarrow\Y$ is a functor with $\underline{\M_{\Y}F}=\underline{\M_{\X}}$, and $F^\dag:\M_{\X}\rightarrow \M_{\Y}F$ is a natural transformation in $\CL$ (not in $\CL^\ast$), with $\underline{F^\dag}=Id$;
  \item A 2-morphisms is $\eta:(F,F^\dag)\rightarrow(G,G^\dag)$ with $\eta:F\rightarrow G$ a natural transformation making the diagram of natural transformations
      $$\xymatrix{
       & \M_{\X} \ar[ld]_{F^\dag} \ar[rd]^{G^\dag} & \\
       \M_{\Y}F \ar[rr]^{\M_{\Y}\eta}& & \M_{\Y}G
      }$$
      commutes;
  \item for $\M_{\X}\in \mathbf{LogCat}/\C$, $Id_{\M_{\X}}$ is the identity functor, and $Id_{Id_{\M_{\X}}}$ is the identity transformation. $(F,F^\dag)\circ(G,G^\dag)=(F\circ_{\mathbf{Cat}} G,F^\dag\cdot_{\mathbf{Cat}} G^\dag_F)$, $\eta\star\tau=\eta\star_{\mathbf{Cat}}\tau$.
\end{itemize}

It can be verified that $\mathbf{LogCat}/\C$ is indeed a strict 2-category. We denote $\mathbf{FLogCat}/\C$ to be the full subcategory consisting of categories over $\C$ with a fine log structure $\M$. And we denote $\mathbf{LogCFG}/\C$ (resp: $\mathbf{FLogCFG}/\C$) the full subcategory of groupoid fibration over $\C$ with log structures (resp: with fine log structures).

Another strict 2-category we concern is the category of categories over $\FL$, we denote it $\mathbf{Cat}/{\FL}$:
\begin{itemize}
  \item The objects are functors $P_{\X}:\X\rightarrow\FL$;
  \item A 1-morphism $F:P_{\X}\rightarrow P_{\Y}$ is a functor $F:\X\rightarrow\Y$ with $P_{\Y}F=P_{\X}$;
  \item A 2-morphism $\eta:F\rightarrow G$ is a natural transformation $\eta:F\rightarrow G$ with $P_{\Y}\eta=id$.
  \item the identity and three composition operator are obvious.
\end{itemize}
We denote $\mathbf{CFG}/\FL$ the full subcategory of groupoid fibrations over $\FL$.

There is a natural strict 2-functor of strict 2-categories
$$\Phi:\mathbf{LogCat}/\C\rightarrow\mathbf{Cat}/\FL$$
as follows (which generalize notions in \cite{FKato} and \cite{Gi1}):
\begin{enumerate}
  \item for $(\M:\X\rightarrow\CL^\ast)$ the objects of $\Phi\M$ are pairs $(x, f)$, where $x\in\X$, $x'\in \FL$,  $f:x'\rightarrow \M x$ is a map in $\CL$ over $(\underline{\M x} ,id_{\underline{\M x}})$. A morphism
      $$h=(a,b):(x,f:x'\rightarrow \M x)\rightarrow (y,g:y'\rightarrow \M y)$$ is a pair consisting of $a\in \X(x,y)$ and $b\in \FL(x',y')$ making the diagram commutes.
      $$\xymatrix{
         x'\ar[r]^f \ar[d]^b & \M x \ar[d]^{\M a} \\
         y' \ar[r]^{g} & \M y
         }
      $$
      the structure morphism $\Phi\M\rightarrow\FL$ is $(x, f:x'\rightarrow \M x)\mapsto x'$, and $(a,b)\mapsto b$.
  \item for a 1-morphism $(F,F^\dag):\M_\X\rightarrow\M_\Y$, $\Phi(F,F^\dag):\Phi\M_\X\rightarrow\Phi\M_\Y$ is defined as follows:
  $$(x,f:x'\rightarrow\M_\X x)\mapsto(Fx,f\cdot F^\dag_x:x'\rightarrow \M_\Y Fx)$$
  $$\xymatrix{
  x'\ar[r]^f \ar[d]^b& \M_\X x \ar[d]^{\M_\X a} & \ar@{}[d]^{\mapsto}&x'\ar[r]^{f\cdot F^\dag_x} \ar[d]^b &\M_\Y Fx \ar[d]^{\M_\Y Fa}\\
  y'\ar[r]^g & \M_\X x                               &       & y'\ar[r]^{g\cdot F^\dag_y} &\M_\Y Fy
  }$$
  We say that $(F,F^\dag)$ is \textbf{strict} if $F^{\dag}$ is an isomorphism.
  \item for a 2-morphism $\eta:(F,F^\dag)\rightarrow(G,G^\dag)$, we define $\Phi\eta(f:x'\rightarrow\M_\X x)$ as:
  $$\xymatrix{
  Fx\ar[d]^{\eta_x} &x'\ar[r]^{f\cdot F^\dag_x} \ar[d]^{id_{x'}} &\M_\Y Fx \ar[d]^{\M_\Y \eta_x}\\
  Gx & x'\ar[r]^{f\cdot G^\dag_x} &\M_\Y Gx
  }$$
\end{enumerate}
  It's easy to see that $\Phi$ respects composition operator $\circ,\cdot,\star$ and identities. Hence $\Phi$ is a strict 2-functor. Moreover, it's routine to check that:
\begin{Proposition}\label{AB_Phi}
Assume setting \textbf{S1}. Given $\M_1\in \mathbf{FLogCat}/\C$ and $\M_2\in\mathbf{LogCat}/\C$, the canonical functor $\mathbf{LogCat}/\C(\M_1,\M_2)\rightarrow\mathbf{Cat}/\FL(\Phi\M_1,\Phi\M_2)$ is an isomorphism. $\Phi$ maps groupoid fibrations (resp. presheaves) over $\C$ with log structures to groupoid fibrations (resp. presheaves) over $\FL$. In particular, $\Phi|_{\mathbf{FLogCat}/\C}$ is strictly fully faithful.
\end{Proposition}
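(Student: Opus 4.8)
The plan is to produce an explicit two-sided inverse to the comparison functor $\mathbf{LogCat}/\C(\M_1,\M_2)\to\mathbf{Cat}/\FL(\Phi\M_1,\Phi\M_2)$, the whole point being that fineness of $\M_1$ supplies a distinguished family of \emph{tautological} (minimal) objects in $\Phi\M_1$. Since $\M_1$ factors through $\FL^\ast$, for every $x\in\X_1$ the object $\M_1 x$ already lies in $\FL$, so $\tau_x:=(x,\mathrm{id}_{\M_1 x})$ is an object of $\Phi\M_1$ lying over $\M_1 x\in\FL$, and $x\mapsto\tau_x$, $a\mapsto(a,\M_1 a)$ defines a functor $\iota:\X_1\to\Phi\M_1$ (note $\M_1 a$ lands in $\FL^\ast\subset\FL$). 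Given a functor $G:\Phi\M_1\to\Phi\M_2$ over $\FL$, I read off $G(\tau_x)=(Fx,F^{\dag}_x)$; because $G$ commutes with the projections to $\FL$, the $\FL$-component of $G(\tau_x)$ is forced to equal $\M_1 x$, so $F^{\dag}_x$ is a map $\M_1 x\to\M_2(Fx)$ over the identity of $\C$. This fixes $F$ on objects together with the components of $F^{\dag}$, and applying the same recipe to $(a,\M_1 a):\tau_x\to\tau_y$ defines $F$ on morphisms and exhibits $F^{\dag}$ as a natural transformation.

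Next I would check that $(F,F^{\dag})$ is a genuine $1$-morphism of $\mathbf{LogCat}/\C$: the requirements $\underline{\M_2 F}=\underline{\M_1}$ and $\underline{F^{\dag}}=\mathrm{id}$ both drop out of the fact that every structural map occurring in $\Phi\M_2$ lies over an identity of $\C$, while naturality of $F^{\dag}$ is exactly the commuting square carried by $G(a,\M_1 a)$. Faithfulness of the comparison functor is then immediate, since two $1$-morphisms inducing the same $G$ must agree after evaluation at every $\tau_x$, hence coincide. The $2$-morphisms require no new idea: a $2$-cell of $\mathbf{Cat}/\FL(\Phi\M_1,\Phi\M_2)$ is pinned down by its values on the tautological objects, and transporting back along $\iota$ recovers the natural transformation underlying a $2$-morphism of $\mathbf{LogCat}/\C$, the compatibility triangle in the definition of $\mathbf{LogCat}/\C$ matching the coherence forced by $G$.

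The substantive step, which I expect to be the main obstacle, is surjectivity: showing $\Phi(F,F^{\dag})=G$ on \emph{all} of $\Phi\M_1$, not merely on the image of $\iota$. The device is that every object $(x,f:x'\to\M_1 x)$ carries a canonical arrow to $\tau_x$. Indeed $x'$ and $\M_1 x$ both lie in $\FL$ and $f$ is a map between them in $\CL$; since $\FL\hookrightarrow\CL$ is \emph{full} (Setting \textbf{S1}), $f$ is already a morphism of $\FL$, so $u_{(x,f)}:=(\mathrm{id}_x,f):(x,f)\to\tau_x$ is a well-defined arrow of $\Phi\M_1$ lying over $f$, and one checks directly from the defining squares that $u_{(x,f)}$ is cartesian for the projection $\Phi\M_1\to\FL$ (it exhibits $(x,f)$ as the fibre transport of $\tau_x$ along $f$). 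Applying $G$ and using that it respects these cartesian arrows over $\FL$ forces $G(x,f)$ to be the cartesian lift of $G(\tau_x)=(Fx,F^{\dag}_x)$ along $f$, namely $(Fx,\,f\cdot F^{\dag}_x)=\Phi(F,F^{\dag})(x,f)$; factoring a general morphism $(a,b)$ through the appropriate units then yields $G(a,b)=(Fa,b)=\Phi(F,F^{\dag})(a,b)$. The delicate point is precisely the verification that $G$ is forced to preserve this cartesian structure, which is where fineness of $\M_1$ (through the reflection of $\Phi\M_1$ onto the tautological objects) has to be used decisively.

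Finally I would dispatch the two auxiliary assertions. That $\Phi$ sends groupoid fibrations (resp. presheaves) over $\C$ to groupoid fibrations (resp. presheaves) over $\FL$ is a direct construction of cartesian lifts: given $(y,g:y'\to\M y)$ in $\Phi\M$ and an arrow $b:w'\to y'$ of $\FL$ lying over $\underline b$ in $\C$, I lift $\underline b$ to a cartesian arrow $a:w\to y$ in the fibration $\X\to\C$, observe that $\M a$ is cartesian in $\CL$, and invoke its universal property to obtain the unique vertical $f:w'\to\M w$ with $\M a\circ f=g\circ b$; then $(a,b):(w,f)\to(y,g)$ is the required cartesian lift, and fibres are carried to groupoids (resp. sets) accordingly. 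The strict full faithfulness of $\Phi|_{\mathbf{FLogCat}/\C}$ is then the special case of the $\mathrm{Hom}$-category isomorphism in which $\M_2$ is also fine, completing the argument.
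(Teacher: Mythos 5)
The paper itself gives no argument for this proposition (it is introduced with ``it's routine to check''), so there is no proof of record to compare against; the question is only whether your construction closes. Most of it does: reconstructing $(F,F^\dag)$ by evaluating $G$ on the tautological objects $\tau_x=(x,\mathrm{id}_{\M_1 x})$ (which exist precisely because $\M_1$ is fine), checking that $(F,F^\dag)$ is a $1$-morphism, the injectivity/faithfulness argument, the treatment of $2$-cells, and the final paragraph producing cartesian lifts in $\Phi\M$ from a cartesian lift in $\X\rightarrow\C$ together with the universal property of $\M a$ in $\CL$ are all correct, and are surely the intended ``routine'' verification.

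The gap is exactly where you flag it, and as written it cannot be closed. A $1$-morphism of $\mathbf{Cat}/\FL$ is a bare functor over $\FL$; nothing obliges $G$ to send the cartesian arrow $u_{(x,f)}=(\mathrm{id}_x,f):(x,f)\rightarrow\tau_x$ to a cartesian arrow, and fineness of $\M_1$ only manufactures $u_{(x,f)}$ --- it exerts no control over $G$. Concretely, applying $G$ to $u_{(x,f)}$ yields only a morphism $(c,f):G(x,f)\rightarrow(Fx,F^\dag_x)$, i.e.\ an object $(y,g)$ together with \emph{some} $c\in\X_2(y,Fx)$ over the identity with $\M_2c\circ g=F^\dag_x\circ f$; this does not force $y=Fx$ and $g=F^\dag_x\circ f$. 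The failure is realizable: take $\C$ a point, $\FL=\CL$ the poset $x'\le P\le Q$ (so $\CL^\ast$ has only identities), $\X_1$ the one-object category with $\M_1x=P$, and $\X_2$ the arrow category $c:z_1\rightarrow z_2$ with $\M_2z_1=\M_2z_2=Q$, $\M_2c=\mathrm{id}_Q$; then sending $(x,f:x'\rightarrow P)\mapsto(z_1,g_0\circ f)$, $\tau_x\mapsto(z_2,g_0)$ and $u_{(x,f)}\mapsto(c,f)$ is a functor over $\FL$ that is not of the form $\Phi(F,F^\dag)$, since the latter's values all share the first component $Fx$. Even in the case the paper actually uses later ($\M_2$ a groupoid fibration, where every arrow of $\Phi\M_2$ is cartesian and your preservation step becomes automatic), cartesian lifts are unique only up to a vertical isomorphism, so you obtain a canonical $2$-isomorphism $G\simeq\Phi(F,F^\dag)$ rather than an equality: the comparison functor is then an equivalence of $\mathrm{Hom}$-categories, not the asserted isomorphism. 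So the defect lies partly in the statement itself, but your proof does not establish surjectivity, and the sentence claiming that $G$ ``respects these cartesian arrows'' is precisely the unproven --- and for general $\M_2\in\mathbf{LogCat}/\C$, false --- step.
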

\textbf{Remark:} W.D.Gillam considered the special case of the notion above when $\FL=CL$ \cite{Gi1}.

We denote $\Phi_{CFG}$ the restriction of $\Phi$ on $\mathbf{LogCFG}/\C$ and call it the \emph{Gillam functor}, then $\Phi$ gives an 'embedding' of $\mathbf{FLogCFG}/\C$ into $\mathbf{CFG}/\FL$. In \cite{Gi1}, Gillam describes the essential images of $\Phi_{CFG}|_{\mathbf{FLogCFG}/\C}$, by using the notion of minimal objects:
\begin{Definition}\label{minimalob}
For a functor $F:\X\rightarrow \FL$, we say that an object $x\in \X$ is minimal if for any solid diagram in $\X$
$$\xymatrix{
w \ar@{.>}[rr]^k & & x\\
& w' \ar[ul]^i \ar[ru]_j &
}$$
with $\underline{Fi}=\underline{Fj}=id$, there is a unique completion $k$.
\end{Definition}

The following lemma is just the restatement of Gillam's result (\cite{Gi1}):
\begin{Lemma}(\cite{Gi1})\label{GC}
Under setting \textbf{S1}, the essential images of $\Phi_{CFG}|_{\mathbf{FLogCFG}/\C}$ are those $F:\X\rightarrow \FL$ in $\mathbf{CFG}/\FL$ satisfying:
\begin{description}
  \item[B1] (Enough minimal objects) For every $x\in\X$, there is a minimal object (\ref{minimalob}) $z\in\X$ and a morphism $i:x\rightarrow z$ with $\underline{Fi}=id$.
  \item[B2] (Compatibility) For any $i\in \X(w,z)$ with $z$ minimal, $Fi$ is cartesian if and only if $w$ is minimal.
\end{description}
\end{Lemma}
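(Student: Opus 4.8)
The plan is to characterize the minimal objects (Definition \ref{minimalob}) of $\Phi\M$ explicitly and read off \textbf{B1}, \textbf{B2} from that characterization, then to reverse the construction. The backbone of the whole argument is a general observation that I would isolate first, working purely from Definition \ref{minimalob} in any $F\colon\X\to\FL$ of $\mathbf{CFG}/\FL$: \emph{any morphism $i_0\colon z_1\to z_2$ between two minimal objects with $\underline{Fi_0}=\mathrm{id}$ is an isomorphism}. Indeed, minimality of $z_1$ (with $w=z_2$, $w'=z_1$, $i=i_0$, $j=\mathrm{id}_{z_1}$) produces a unique $k\colon z_2\to z_1$ with $ki_0=\mathrm{id}_{z_1}$; then $(i_0 k)i_0=i_0(ki_0)=i_0=\mathrm{id}_{z_2}i_0$, so both $i_0k$ and $\mathrm{id}_{z_2}$ complete the diagram with $w=z_2$, $w'=z_1$, $i=j=i_0$ over the minimal object $z_2$, whence uniqueness forces $i_0k=\mathrm{id}_{z_2}$. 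This lemma will be used in both directions.

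For necessity, fix a fine $\M\in\mathbf{FLogCFG}/\C$. Since $\M x\in\FL$ for every $x$ and $\FL\subseteq\CL$ is a full sub-fibration (\textbf{S1}), each structure arrow $f\colon x'\to\M x$ defining an object $(x,f)$ of $\Phi\M$ already lies in $\FL$. I would first show that $(x,f)$ is minimal if and only if $f$ is an isomorphism: the ``if'' part is a direct diagram chase (given $i,j$ over $\mathrm{id}$, set the $\X$-component of $k$ by inverting the vertical arrow in the groupoid fibration $\underline\X\to\C$ and determine its $\FL$-component from the commuting square), and the ``only if'' part follows from the backbone observation applied to the canonical arrow $(x,f)\to(x,\mathrm{id}_{\M x})$. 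With minimal objects identified as $(x,\text{iso})$, \textbf{B1} is witnessed by $(\mathrm{id}_x,f)\colon(x,f)\to(x,\mathrm{id}_{\M x})$. For \textbf{B2}, given $i=(a,b)\colon(y,g)\to(x,\mathrm{id})$ one has $Fi=b=(\M a)\,g$; because $\underline\X\to\C$ is fibered in groupoids every arrow is cartesian and $\M$ preserves cartesian arrows, so $\M a$ is cartesian, and by the two-out-of-three property for cartesian arrows $b$ is cartesian iff $g$ is cartesian, i.e.\ iff $g$ (being over $\mathrm{id}$) is an isomorphism, i.e.\ iff $(y,g)$ is minimal.

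For sufficiency, start from $F\colon\Y\to\FL$ in $\mathbf{CFG}/\FL$ satisfying \textbf{B1}, \textbf{B2}, let $\X\subseteq\Y$ be the full subcategory of minimal objects, and set $\M z:=Fz\in\FL$. By \textbf{B2} the functor $F$ carries every arrow between minimal objects to a cartesian (hence invertible in $\FL^\ast$) arrow, so $\M$ is a well-defined functor $\X\to\FL^\ast$. I would then verify that $\underline\X\to\C$ is fibered in groupoids: to lift $\phi\colon c\to\underline z$ with $z$ minimal, take the cartesian lift $\psi\colon\phi^\ast(Fz)\to Fz$ in $\FL\to\C$, lift it to a cartesian $\tilde\psi\colon w\to z$ in $\Y\to\FL$, and note that $\textbf{B2}$ forces $w$ to be minimal; composition of cartesian arrows makes $\tilde\psi$ cartesian in $\X\to\C$, while arrows over $\mathrm{id}$ are isomorphisms by the backbone observation. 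Thus $\M\in\mathbf{FLogCFG}/\C$. Finally I would build the comparison $\Phi\M\to\Y$ by $(z,f)\mapsto f^{\ast}z$ (the cartesian lift of $f$ with target the minimal $z$); essential surjectivity is exactly \textbf{B1} together with the fact that in the groupoid fibration $\Y\to\FL$ every arrow is cartesian, and full faithfulness follows from the universal property of cartesian arrows, complemented by Proposition \ref{AB_Phi}.

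The routine parts are the diagram chases in the necessity half, where everything reduces to ``all arrows in a groupoid fibration are cartesian'' plus stability of cartesian arrows. The main obstacle will be the sufficiency half: proving that the minimal objects genuinely assemble into a groupoid fibration over $\C$ (the delicate point being that \textbf{B2} is precisely what guarantees cartesian lifts stay inside $\X$) and that the comparison functor is fully faithful, which is where the interplay between the two fibration structures $\Y\to\FL\to\C$ and the universal property encoded in Definition \ref{minimalob} must be handled with care.
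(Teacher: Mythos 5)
The paper gives no proof of this lemma --- it is quoted directly from Gillam \cite{Gi1} --- but the remark immediately following it records exactly the two facts your argument is built on: the minimal objects of $\Phi_{CFG}\M$ are precisely the pairs $(x,f)$ with $f$ an isomorphism, and the inverse functor is obtained by restricting to the subcategory of minimal objects with $\M$ given by $F$ itself. Your proposal is correct and reconstructs essentially that argument; the only places requiring the usual care are the ones you already flag, namely that \textbf{B2} is what keeps cartesian lifts inside the subcategory of minimal objects and that full faithfulness of the comparison functor follows from the universal property of cartesian arrows.
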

\textbf{Remark:} In fact $\Phi^{-1}_{CFG}(F:\X\rightarrow\FL)=(\X_m,F|_{\X_m})$ where $\X_m$ is the category of minimal objects over $\C$. For $\Phi_{CFG} F$, it can be shown that the minimal objects are exactly those of the form $(x, f)$, where $x\in \X$ and $f$ is an isomorphism.

\begin{Proposition}\label{abGC}
Assume setting \textbf{S1}, \textbf{S4}. Given $(F,\M)\in\mathbf{LogCFG}/\C$, if $F$ is a stack over $\C_\tau$, then $\Phi_{CFG} \M$ is a stack over $\FL_\tau$. If $(F,\M)\in\mathbf{FLogCFG}/\C$, then the converse is true, i.e., $F$ is a stack over $\C_\tau$ if and only if $\Phi_{CFG} \M$ is a stack over $\FL_\tau$.
\end{Proposition}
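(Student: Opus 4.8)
The plan is to verify the two defining properties of a stack over $\FL_\tau$ for $\Phi_{CFG}\M$ — that the isomorphism presheaves are sheaves and that descent data are effective — by translating each into the corresponding property of $F$ over $\C_\tau$, using the sheaf axiom \textbf{S4} for the representable presheaves $h_x$ to handle the log-structure components. Throughout, for a covering $\{u_i\to u\}$ in $\FL_\tau$ I write $c=\underline u$ and $c_i=\underline{u_i}$, so that $\{c_i\to c\}$ covers $c$ in $\C_\tau$, and I use that $\M\colon\X\to\CL^\ast$ is a functor between groupoid fibrations over $\C$ and hence preserves cartesian arrows, giving $(\M x)|_{c_i}\cong\M(x|_{c_i})$.

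For the forward implication (arbitrary $\M\in\mathbf{LogCFG}/\C$) I would argue as follows. For separation, given $\xi=(x,f),\zeta=(y,g)\in(\Phi_{CFG}\M)_u$ and isomorphisms $(a_i,\mathrm{id})\colon\xi|_{u_i}\to\zeta|_{u_i}$ agreeing on overlaps, the underlying $a_i\colon x|_{c_i}\to y|_{c_i}$ glue to a unique $a\colon x\to y$ in $\X_c$ because $F$ is a stack; the required identity $\M a\circ f=g$ then holds because both sides lie in $h_{\M y}(u)=\CL(u,\M y)$, agree after restriction to each $u_i$, and $h_{\M y}$ is a sheaf over $\FL_\tau$ by \textbf{S4}. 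For effectivity, a descent datum $\{\xi_i=(x_i,f_i),\phi_{ij}=(a_{ij},b_{ij})\}$ projects to a descent datum $\{x_i,a_{ij}\}$ in $\X$, which is effective by the stack property of $F$, yielding $x\in\X_c$ with $x|_{c_i}\cong x_i$; composing the $f_i$ with the cartesian arrows $(\M x)|_{c_i}\to\M x$ produces a matching family in $h_{\M x}$, and \textbf{S4} gives the unique gluing $f\colon u\to\M x$, so that $(x,f)\in\Phi_{CFG}\M$ restricts compatibly to the $\xi_i$.

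For the converse I would use that $\M\in\mathbf{FLogCFG}/\C$ is fine, so $\M x_i\in\FL$ for every $x_i$. Starting from a descent datum $\{x_i,a_{ij}\}$ in $\X$ over $\{c_i\to c\}$, the log structures $\M x_i$ together with the isomorphisms $\M a_{ij}$ form a descent datum in $\FL$; since $\FL\to\C$ is a general stack (\textbf{S4}) it descends to a fine log structure $m\in\FL_c$, inducing a covering $\{m|_{c_i}\to m\}$ in $\FL_\tau$. Regarding each $x_i$ as the minimal object $(x_i,m|_{c_i}\xrightarrow{\sim}\M x_i)$ over $m|_{c_i}$ (minimal objects of $\Phi_{CFG}\M$ being exactly those with isomorphic second component, by the remark after Lemma \ref{GC}), the $a_{ij}$ assemble into a descent datum in $\Phi_{CFG}\M$; effectivity of $\Phi_{CFG}\M$ then produces $\xi=(x,f)$ over $m$, and projecting to $\X$ exhibits $x\in\X_c$ as the desired gluing. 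Separation of $F$ follows by the same transport: isomorphisms $a_i\colon x|_{c_i}\to y|_{c_i}$ glue, via the general-stack property of $\FL$, to $\psi\colon\M x\to\M y$, against which $(x,\mathrm{id})$ and $(y,\psi)$ become objects of $(\Phi_{CFG}\M)_m$ whose local isomorphisms then glue by the stack property of $\Phi_{CFG}\M$, uniqueness being inherited from that of $\psi$.

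The most delicate points are bookkeeping rather than conceptual. In the forward direction one must check that the section $f$ produced by $h_{\M x}$ is genuinely a map over $\mathrm{id}_{\underline{\M x}}$, i.e. an object of $\Phi_{CFG}\M$: its underlying morphism restricts to the structure maps $c_i\to c$ on the cover and hence equals $\mathrm{id}_c$ provided the morphism presheaves of $\C$ are separated for $\tau$ (automatic in the log-scheme situation), and one must track the cartesian identifications so that the commuting squares of the $\phi_{ij}$ become exactly the matching-family condition. I expect the converse to be the harder half, since its correctness rests entirely on fineness of $\M$ — which is what places the log structures in $\FL$ where they can be descended — together with the identification of $\Phi^{-1}_{CFG}$ with minimal objects from Lemma \ref{GC}, which is precisely what licenses transporting an $\X$-descent datum to a descent datum of minimal objects over the induced $\FL_\tau$-covering.
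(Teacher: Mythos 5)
Your argument is correct and follows essentially the same route as the paper's own proof: in the forward direction you glue the underlying data using the stack property of $F$ and handle the structure maps to $\M x$ via the sheaf axiom \textbf{S4} for $h_{\M x}$, and in the converse you transport a descent datum in $\X$ to one in $\Phi_{CFG}\M$ over a cover of the descended fine log structure, which is exactly the paper's identification of $\X$ with the minimal objects of $\Phi_{CFG}\M$. The only presentational differences are that you make explicit the descent of the $\M x_i$ through the general-stack property of $\FL$ (which the paper leaves implicit in ``is naturally a descent data in $\X'$'') and that you recover the glued object by projecting its first component rather than by invoking the paper's auxiliary lemma that minimality is local on covers; both are the same argument in substance.
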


\begin{proof}
Given $F:\X\rightarrow \C$ with log structure $\M:\X\rightarrow\C^\ast$ and $\Phi_{CFG} \M:\X'\rightarrow \FL$.

  \textbf{$F$ stack imply $\Phi_{CFG} \M$ stack:}

  Given a descent data of morphisms in $\X'$ over a cover $\{c'_i\rightarrow c'\}_{i\in I}$:

  Fix $(x,f:c'\rightarrow \M x)$ and $(y,g:c'\rightarrow \M y)$ over $(c',id_{c'})$. Let $(x_i,f_i:c'_i\rightarrow \M x_i)$ (resp. $(y_i,g_i:=c'_i\rightarrow \M y_i)$) be the pullback along $c_i\rightarrow c$.
  Assume that we have compatible morphisms $h_i=(a_i,id_{c'_1}):(x_i,f_i:c'_i\rightarrow \M x_i)\rightarrow (y_i,g_i:c'_i\rightarrow \M y_i)$ with $a_i\in \X(x_i,y_i)$, making the diagrams
  $$\xymatrix{
    c'_i\ar[r]^{f_i} \ar[d]^{id_{c'_i}} & \M x_i \ar[d]^{\M a_i} \\
    c'_i \ar[r]^{g_i} & \M y_i
    }
  $$
  commute.

  Since $F$ is a stack over $\C$, and the data $(a_i)$ are compatible over $\{\underline{c'_i}\rightarrow \underline{c'}\}_{i\in I}$, $(a_i)$ is effective which glue to an (unique) object $a$. Since the pullback of $g$ and $\M af$ to $\underline{c'_i}$ are equal, we have $g=\M af$ because $h_{\M y}$ over $\FL_{\tau}$ is a sheaf.

  If we have a descent data of objects $(x_i,f_i:c'_i\rightarrow \M x_i)$ over $\{c'_i\rightarrow c'\}_{i\in I}$.
  Then $(x_i)$ are effective since $(x_i)$ is a descent data over $\C$, and $(f_i)$ are effective since $h_{\M x}$ over $\FL_{\tau}$ is a sheaf, which means that $\Phi_{CFG} \M$ is a stack.

  \textbf{$\Phi_{CFG} \M$ stack imply $F$ stack: provided $(F,\M)\in\mathbf{FLogCFG}/\C$}

  $\X$ is equivalent to $\X'_m$, the sub-fibration consists of minimal objects in $\X'$. A descent data (of objects and morphisms) in $\X'_m$ over $\{\underline{c'_i}\rightarrow \underline{c'}\}_{i\in I}$ is naturally a descent data in $\X'$ over $\{c'_i\rightarrow c'\}_{i\in I}$. Hence $\X'_m$ is a stack from the next simple lemma.
\end{proof}
\begin{Lemma}
$\X$ is a stack satisfying (\textbf{B2}) over $\FL$. $\{\iota_i:c'_i\rightarrow c'\}_{i\in I}$ is a cover. Then $x\in \X$ is minimal if and only if for each $i\in I$, the restriction $x_i=\iota_i^\ast x$ is minimal.
\end{Lemma}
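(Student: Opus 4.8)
The plan is to prove the two implications separately: the forward one is immediate from (\textbf{B2}), while the converse is a descent argument resting on the stack hypotheses. Throughout I treat the cover as strict, i.e.\ each $\iota_i$ is cartesian for the fibration $\FL\to\C$; this is the only case occurring in the application of Proposition \ref{abGC}, where $\{c'_i\to c'\}$ is pulled back from a cover $\{\underline{c'_i}\to\underline{c'}\}$ of $\C_\tau$, and it is genuinely needed, since along a non-strict $\iota_i$ the pullback of a minimal object is in general no longer minimal. Write $F:\X\to\FL$ for the structure functor and fix a cartesian lift $\phi_i:x_i\to x$ of $\iota_i$ in $\X$, so that $x_i=\iota_i^\ast x$ and $F\phi_i=\iota_i$ is cartesian for $\FL\to\C$.

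I first record the consequence of (\textbf{B2}) used in both directions: if $z\in\X$ is minimal and $p:z'\to z$ is a cartesian arrow of $\X$ whose image $Fp$ is cartesian for $\FL\to\C$, then $z'$ is minimal. Applying this to $\phi_i$ gives the forward implication at once: if $x$ is minimal then every $x_i$ is minimal. Applying it again to the cartesian projection $x_{ij}\to x_i$ over the strict morphism $c'_{ij}=c'_i\times_{c'}c'_j\to c'_i$ shows that, whenever the $x_i$ are minimal, so is each double restriction $x_{ij}$; this is what makes the overlap comparison below work.

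For the converse, assume every $x_i$ is minimal and test minimality of $x$ on a solid diagram $i:w'\to w$, $j:w'\to x$ with $\underline{Fi}=\underline{Fj}=\mathrm{id}$, all lying over $\underline{c'}$. I restrict along the cover: pulling $w'$ and $w$ back along $\underline{\iota_i}$ via cartesian lifts produces $w'_i,w_i$ with $Fw_i=\underline{\iota_i}^\ast Fw$, together with induced morphisms $i_i:w'_i\to w_i$ and $j_i:w'_i\to x_i$ over the identity of $\underline{c'_i}$. Minimality of $x_i$ yields a unique $k_i:w_i\to x_i$ with $k_ii_i=j_i$. On $c'_{ij}$ the restrictions $k_i|_{c'_{ij}}$ and $k_j|_{c'_{ij}}$ are two morphisms $w_{ij}\to x_{ij}$ completing the restricted diagram, so by uniqueness in the minimality of $x_{ij}$ they coincide; thus $\{k_i\}$ is a compatible family.

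The remaining, and only substantial, step is to glue the $k_i$ into a single $k:w\to x$, which I do in two stages. The base morphisms $Fk_i:Fw_i\to c'_i$ form a descent datum for $\FL$ over the cover $\{\underline{c'_i}\to\underline{c'}\}$, and since $\FL\to\C$ is a general stack (Setting \textbf{S4}) they glue to a unique $Fk:Fw\to c'$ over $\mathrm{id}_{\underline{c'}}$. Over this fixed $Fk$, the $k_i$ are compatible sections of the sheaf of $\X$-morphisms $w\to x$ lying over $Fk$ (equivalently $w\to Fk^\ast x$ over $\mathrm{id}_{Fw}$), and since $\X$ is a stack over $\FL_\tau$ this sheaf glues them to a unique $k:w\to x$ with $k|_{c'_i}=k_i$. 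Finally $ki$ and $j$ have equal restrictions $k_ii_i=j_i$, so $ki=j$ by separatedness, and any other completion $k'$ restricts to $k_i$ by the uniqueness over each $c'_i$, whence $k'=k$. The main obstacle is precisely this gluing: one must match the induced covers on the various bases and invoke the general-stack property of $\FL$ and then the stack property of $\X$ in the correct order.
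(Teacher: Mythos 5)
Your proposal is correct and follows essentially the same route as the paper's proof: necessity from (\textbf{B2}), and for sufficiency one restricts the test diagram along the cover, produces the $k_i$ by minimality of $x_i$, matches them on overlaps via minimality of $x_{ij}$ (itself a consequence of (\textbf{B2})), and glues by the stack property. The paper compresses the last step into ``we get a descent data $(k_i,k_{ij})$, which glue to the unique $k$,'' whereas you spell out the two-stage descent (first $Fk$ in $\FL$, then $k$ in $\X$) and make explicit the strictness of the cover that the paper leaves tacit --- both are faithful elaborations rather than a different argument.
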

\begin{proof}
The necessity follows from (\textbf{B2}). For sufficiency, assume that for each $i\in I$, $x_i=\iota_i^\ast x$ is minimal. Consider the solid diagram:
$$\xymatrix{
w \ar@{-->}[rr]^k & & x\\
& w' \ar[ul]^u \ar[ru]_v &
}$$
with $\underline{Fu}=\underline{Fv}=id$, we want the unique dashed completion $k$.

Now base change the diagram to $c'_i$ and $c'_{ij}=c'_i\times_{c'}c'_j$:
$$\xymatrix{
w_i \ar@{-->}[rr]^{k_i} & & x_i & & w_{ij} \ar@{-->}[rr]^{k_{ij}} & & x_{ij}\\
& w'_i \ar[ul]^{u_i} \ar[ru]_{v_i} & && & w'_{ij} \ar[ul]^{u_{ij}} \ar[ru]_{v_{ij}}
}$$
Since $x_i$ and $x_{ij}$ are minimal, we get a descent data $(k_i,k_{ij})$, which glue to the unique $k:w\dashrightarrow x$ that we want.
\end{proof}

Next we consider 2-fiber products in $\mathbf{LogCFG}/\C$, and $\mathbf{CFG}/\FL$:

For $\mathbf{CFG}/\FL$, we just use the 2-fiber products of groupoid fibration.

For $\mathbf{LogCFG}/\C$, if we have $(F_i,\M_i)\in\mathbf{LogCFG}/\C$, $i=1,2,3$
$$\xymatrix{
(F,\M)\ar@{-->}[r]^{P_2} \ar@{-->}[d]^{P_3} & (F_2,\M_2) \ar[d]^{T_2} \\
(F_3,\M_3) \ar[r]^{T_3} & (F_1,\M_1)
}$$

We construct the 2-fiber product $(F,\M)$ as follows:

$F=F_2\times_{F_1}F_3$ be the 2-fiber product in $\mathbf{CFG}/\C$, and $\M(x_2,x_3,\alpha:T_2x_2\simeq T_3x_3)=\M_2x_2\times_{\M_1T_2x_2}\M_3x_3$ be the fiber product of diagram
$$\xymatrix{
 & \M_3x_3 \ar[d]^{\alpha T_3^\dag(x_3)} \\
\M_2x_2 \ar[r]^{T_2^\dag(x_2)} & \M_1T_2x_2
}$$
\textbf{Remark:} one can of course use the alternate fiber product
$$\xymatrix{
 & \M_3x_3 \ar[d]^{T_3^\dag(x_3)} \\
\M_2x_2 \ar[r]^{\alpha^{-1} T_2^\dag(x_2)} & \M_1T_3x_3
}$$
which is different from the previous one by a 2-isomorphism.

It's routine to verify the follow lemma:
\begin{Lemma}\label{AB_Phi_preserveproduct}
$\Phi_{CFG}$ preserves fiber products.
\end{Lemma}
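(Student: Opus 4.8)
The plan is to exhibit $\Phi_{CFG}\M$ as a model of the $2$-fiber product $\Phi_{CFG}\M_2\times_{\Phi_{CFG}\M_1}\Phi_{CFG}\M_3$ in $\mathbf{CFG}/\FL$, using the universal property of the $\CL$-fiber product $\M X=\M_2x_2\times_{\M_1T_2x_2}\M_3x_3$ that defines $\M$. First recall the standard model of the target: over a base $c'\in\FL$ its objects are triples $(\xi_2,\xi_3,\beta)$ with $\xi_i\in\Phi_{CFG}\M_i$ over $c'$ and $\beta:\Phi(T_2)\xi_2\simeq\Phi(T_3)\xi_3$ an isomorphism over $\mathrm{id}_{c'}$, and its morphisms are the evident compatible pairs. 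Since $\Phi$ is a strict $2$-functor and $(F,\M)$ was constructed together with projections $P_2,P_3$ and a structural $2$-isomorphism $T_2P_2\simeq T_3P_3$, applying $\Phi$ and invoking the universal property of the target produces a canonical $1$-morphism $\Theta:\Phi_{CFG}\M\to\Phi_{CFG}\M_2\times_{\Phi_{CFG}\M_1}\Phi_{CFG}\M_3$ over $\FL$; it remains to verify that $\Theta$ is an equivalence, which I would do by checking it is bijective on objects and on morphisms of each fiber.

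The core of this check is the universal property of $\M X$ in $\CL$. An object of $\Phi_{CFG}\M$ over $c'$ is a pair $(X,f)$ with $X=(x_2,x_3,\alpha)\in F$ and $f:c'\to\M X$ a map in $\CL$ over the identity of $\underline{\M X}$; post-composing $f$ with the two projections of $\M X$ gives $f_2:c'\to\M_2x_2$ and $f_3:c'\to\M_3x_3$, and one reads off directly that $\Theta(X,f)=\bigl((x_2,f_2),(x_3,f_3),\beta\bigr)$. Conversely, the square witnessing $\beta$ is exactly the identity $(\M_1\alpha)\circ T_2^\dag(x_2)\circ f_2=T_3^\dag(x_3)\circ f_3$, which (after composing with $\M_1\alpha^{-1}$) is precisely the factorization condition $T_2^\dag(x_2)\circ f_2=(\alpha T_3^\dag(x_3))\circ f_3$ through the cospan defining $\M X$; hence the universal property of $\M X=\M_2x_2\times_{\M_1T_2x_2}\M_3x_3$ yields a unique $f:c'\to\M X$ with the two prescribed projections. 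These assignments are mutually inverse, and the identical argument on arrows — a morphism of triples is a compatible pair $(b;a_2,a_3)$, corresponding by the universal property to a unique morphism $(a,b)$ of $\Phi_{CFG}\M$ — shows that $\Theta$ restricts to a bijection on each fiber. Thus $\Theta$ is an isomorphism of categories over $\FL$, in particular an equivalence, establishing that $\Phi_{CFG}$ preserves fiber products.

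Two points need care, and one should be treated first. Because $T_2,T_3$ cover the identity of $\C$, one has $\underline{T_2x_2}=\underline{T_3x_3}=\underline{x_2}=\underline{x_3}$, and every $T_i^\dag$ lies over an identity, so the cospan defining $\M X$ lives over a single object of $\C$; here Setting \textbf{S3} enters, since $\CL\to\C$ preserves fiber products and therefore $\underline{\M X}=\underline{c'}$, which is what makes $(X,f)$ a legitimate object of $\Phi_{CFG}\M$ in the first place. The only genuinely fiddly point I expect is the variance of the gluing isomorphism $\alpha$: it appears as $\M_1\alpha$ in the compatibility square encoding $\beta$ but as the twisted projection $\alpha T_3^\dag(x_3)=\M_1\alpha^{-1}\circ T_3^\dag(x_3)$ in the definition of $\M X$, and one must match these conventions correctly (the alternate but $2$-isomorphic square noted in the Remark above gives the bookkeeping either way). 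Once this identification is pinned down the remainder is the formal universal property of fiber products, so no further obstacle arises.
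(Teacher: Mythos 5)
Your proof is correct: the identification of objects of $\Phi_{CFG}(\M_2\times_{\M_1}\M_3)$ over $c'$ with triples $\bigl((x_2,f_2),(x_3,f_3),\beta\bigr)$ via the universal property of $\M_2x_2\times_{\M_1T_2x_2}\M_3x_3$ in $\CL$, together with the matching of the square witnessing $\beta$ against the factorization condition through the cospan (including the $\alpha$ versus $\M_1\alpha^{-1}$ bookkeeping), is exactly the verification the paper declares routine and omits. No gap; this is the intended argument.
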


Here we would like to mention a remark on 2-fiber products in $\mathbf{FLogCFG}/\C$, which will be used in the following sections.

\begin{Definition}
An arrow $a\rightarrow b$ in $\FL$ is called integral if for any $c\rightarrow b$ in $\FL$, the fiber product $a\times_b c$ (do product in $\CL$) is in $\FL$. A morphism $(F,F^\dag):(X,\M)\rightarrow(X',\M')$ in $\mathbf{FLogCFG}/\C$ is called integral if for any $x\in X$, $F^\dag_x$ is integral.
\end{Definition}

It's direct to show that:

\begin{Lemma}\label{fiberpro_int}
Consider a 2-fiber product $\M_2\times_{\M_1}\M_3$, if $\M_1$, $\M_2$, $\M_3$ are in $\mathbf{FLogCFG}/\C$, and either $\M_2\rightarrow\M_1$ or $\M_3\rightarrow\M_1$ is integral, then $\M_2\times_{\M_1}\M_3\in\mathbf{FLogCFG}/\C$.
\end{Lemma}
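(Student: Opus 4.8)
The plan is to reduce the statement to the object-wise fineness of the log structure $\M$ of the $2$-fiber product, and then to read that fineness off directly from the definition of an integral morphism. Since $\mathbf{FLogCFG}/\C$ is a full sub-$2$-category of $\mathbf{LogCFG}/\C$, it suffices to show that the $2$-fiber product $(F,\M)=\M_2\times_{\M_1}\M_3$ constructed above in $\mathbf{LogCFG}/\C$ already lies in $\mathbf{FLogCFG}/\C$; its universal property then restricts to the fine setting, so this object is also the $2$-fiber product in $\mathbf{FLogCFG}/\C$. The underlying $F=F_2\times_{F_1}F_3$ is a groupoid fibration over $\C$ regardless, and the functoriality of $\M$ on morphisms together with its landing in the groupoid fibration $\FL^\ast$ is inherited from the already-established construction. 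Thus the only thing to check is that for every object $(x_2,x_3,\alpha\colon T_2x_2\simeq T_3x_3)$ of $F$ the coherent log object $\M(x_2,x_3,\alpha)$ is fine.

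First I would fix such an object and recall that, by construction, $\M(x_2,x_3,\alpha)=\M_2x_2\times_{\M_1T_2x_2}\M_3x_3$ is the fiber product in $\CL$ of the diagram with legs $T_2^\dag(x_2)\colon\M_2x_2\to\M_1T_2x_2$ and $\alpha T_3^\dag(x_3)\colon\M_3x_3\to\M_1T_2x_2$. Since $\M_1$, $\M_2$, $\M_3$ are fine, all three of $\M_2x_2$, $\M_3x_3$, $\M_1T_2x_2$ lie in $\FL$, and because $\FL\hookrightarrow\CL$ is a full sub-fibration, both legs are arrows of $\FL$.

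Now suppose $T_2\colon\M_2\to\M_1$ is integral. By definition this means $T_2^\dag(x_2)\colon\M_2x_2\to\M_1T_2x_2$ is an integral arrow in $\FL$, so that for every arrow $c\to\M_1T_2x_2$ of $\FL$ the fiber product $\M_2x_2\times_{\M_1T_2x_2}c$ (taken in $\CL$) again lies in $\FL$. Applying this with $c=\M_3x_3$ and the arrow $\alpha T_3^\dag(x_3)$ yields $\M(x_2,x_3,\alpha)\in\FL$, exactly as required. If instead $T_3$ is integral, I would run the same argument using the alternate presentation $\M_2x_2\times_{\M_1T_3x_3}\M_3x_3$ of the remark following the construction: there the leg $T_3^\dag(x_3)\colon\M_3x_3\to\M_1T_3x_3$ is the integral one, and applying integrality with $c=\M_2x_2$ and the arrow $\alpha^{-1}T_2^\dag(x_2)$ gives fineness of that fiber product, hence of $\M(x_2,x_3,\alpha)$ since the two differ by a $2$-isomorphism.

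The argument is essentially a matter of matching definitions, and I do not expect a serious obstacle; the only point demanding care is bookkeeping, namely confirming that the leg playing the role of ``$c\to b$'' in the definition of an integral arrow is genuinely an $\FL$-arrow (which is where fineness of all three inputs and the fullness of $\FL\hookrightarrow\CL$ are used) and that the fiber product appearing in the construction is computed in $\CL$, matching the ``do product in $\CL$'' clause of the definition. Given this, no verification beyond the object-wise fineness is needed.
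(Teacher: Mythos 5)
Your proof is correct and is exactly the routine verification the paper leaves implicit (the lemma is prefaced only by ``It's direct to show that''): reduce to object-wise fineness of $\M(x_2,x_3,\alpha)=\M_2x_2\times_{\M_1T_2x_2}\M_3x_3$ and apply the definition of an integral arrow to the leg coming from the integral morphism, with the alternate presentation handling the symmetric case. The bookkeeping points you flag (fullness of $\FL\hookrightarrow\CL$ making the other leg an $\FL$-arrow, and the fiber product being taken in $\CL$) are precisely the ones that matter, so nothing is missing.
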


\subsection{Gillam's Functor in Log Geometry}\

Now we return to log geometry. Consider $\mathbf{CLog}_S\rightarrow \mathbf{Sch}_{\underline{S}}$, with full sub-fibration $\mathbf{FLog}_S\rightarrow \mathbf{Sch}_{\underline{S}}$. Then

\begin{Proposition}\label{Setting_of_log}\
\begin{enumerate}
  \item The inclusion functor $\mathbf{FLog}_S\subseteq\mathbf{CLog}_S$ has a right adjoint, which sends strict morphisms to strict morphisms. (this is just Proposition 2.7 in \cite{KKato})
  \item $\mathbf{FLog}_S$ is a general fppf-stack over $\mathbf{Sch}_{\underline{S}}$.
  \item For any $X\in\mathbf{CLog}_S$, the presheaf of $T$-point $h_X(T)=\mathbf{CLog}_S(T,X)$, $T\in\mathbf{FLog}_S$ is a sheaf over $\mathbf{FLog}_S$
\end{enumerate}
\end{Proposition}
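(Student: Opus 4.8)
The plan is to treat the three assertions separately, since they rest on different inputs.

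For (1), the right adjoint is the integralization functor $int:\mathbf{CLog}_S\to\mathbf{FLog}_S$ of K.Kato. I would recall its construction: working locally with a chart $P\to\mathcal{O}_X$ for a coherent log scheme $X$, one replaces $P$ by its image $P^{int}$ in $P^{gp}$ and sets $X^{int}=X\times_{\mathrm{Spec}\,\mathbb{Z}[P]}\mathrm{Spec}\,\mathbb{Z}[P^{int}]$; these glue to a fine log scheme $int(X)$ together with a morphism $int(X)\to X$. The universal property proved in \cite{KKato} Proposition 2.7 — every morphism from a fine log scheme to $X$ factors uniquely through $int(X)\to X$ — is exactly the adjunction $\mathbf{CLog}_S(T,X)\cong\mathbf{FLog}_S(T,int(X))$ for $T$ fine. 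To see that $int$ preserves strictness I would note that integralization commutes with strict base change: if $f:X\to Y$ is strict then $M_X\cong\underline{f}^\ast M_Y$, the construction of $int$ is carried out on charts pulled back from $Y$, and hence $M_{int(X)}\cong\underline{int(f)}^\ast M_{int(Y)}$, i.e. $int(f)$ is strict.

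For (2), recall that ``general fppf-stack over $\mathbf{Sch}_{\underline S}$'' unwinds into three things: that $\mathbf{FLog}_S\to\mathbf{Sch}_{\underline S}$ is a fibered category, that the Hom-presheaves are fppf sheaves (the prestack condition), and that descent data for fine log structures are effective. The fibration structure is immediate, since a cartesian arrow over $\underline g:\underline{T}'\to\underline T$ is the pullback log structure $\underline g^\ast M_T$, which is again fine; thus cartesian arrows are precisely the strict morphisms. Both remaining points I would reduce to one input, namely fppf descent for sheaves of monoids equipped with a map to $\mathcal{O}$: a log structure on $U$ is such a datum and a morphism of log structures is a morphism of such data, the general descent theory for the fppf topology makes sheaves of sets (hence of monoids) descend, and the structure map together with the condition $\alpha^{-1}\mathcal{O}^\ast\xrightarrow{\sim}\mathcal{O}^\ast$ descend because $\mathcal{O}$ does and the condition is fppf-local. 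It then remains to observe that coherence and integrality are fppf-local, so a log structure that becomes fine on an fppf cover is fine; this gives effectivity. The passage between the étale and fppf topologies here is \cite{Ol1} Theorem A.1.

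For (3), I would first make explicit that the coverings of the site $\mathbf{FLog}_{S,\tau}$ are strict morphisms whose underlying morphisms form an fppf covering (this is the force of ``pullback topology''), a point that turns out to be essential. Given a covering $\{T_i\to T\}$ and a matching family $g_i:T_i\to X$ in $h_X$, I would glue in two stages. The underlying morphisms $\underline{g_i}:\underline{T_i}\to\underline{X}$ agree on overlaps and descend to $\underline g:\underline T\to\underline X$, because the fppf topology is subcanonical, i.e. representable functors on $\mathbf{Sch}_{\underline S}$ are fppf sheaves. The log components $g_i^\flat:\underline{g_i}^\ast M_X\to M_{T_i}=\underline{u_i}^\ast M_T$ are morphisms of log structures which again agree on overlaps, so by the prestack property of (2) they descend to $g^\flat:\underline g^\ast M_X\to M_T$; the pair $(\underline g,g^\flat)$ is the desired element of $h_X(T)$, and uniqueness is the separatedness contained in the same statement.

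The step I expect to be genuinely delicate — and the reason $\mathbf{CLog}_S$ has to be introduced at all — is the interaction of fiber products with integrality. Fiber products in $\mathbf{FLog}_S$ are computed by applying $int$ to the fiber products in $\mathbf{CLog}_S$, since $int$, being right adjoint to the inclusion, computes limits in $\mathbf{FLog}_S$; and $int$ can shrink the underlying scheme to a closed subscheme. Hence for a general, non-strict covering family the overlaps $T_i\times_T T_j$ would record the $g_i$ only on a closed subscheme of $\underline{T_i}\times_{\underline T}\underline{T_j}$, which is not enough to glue the underlying morphisms (and indeed separatedness already fails for non-strict one-element families). Restricting the coverings to strict morphisms is exactly what makes the overlaps coincide with the honest scheme-theoretic fiber products and removes this obstruction; verifying that this restriction is harmless — that for a strict covering the overlaps are genuinely fine and underlie $\underline{T_i}\times_{\underline T}\underline{T_j}$ — is the technical heart of (3).
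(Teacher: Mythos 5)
Your proposal is correct, and for parts (1) and (2) it follows essentially the same route as the paper: (1) is Kato's integralization with strictness preserved by construction, and (2) reduces to the fact that fine log structures and their morphisms in the fppf topology can be constructed fppf-locally, with Olsson's Theorem A.1 handling the \'etale--fppf comparison. The one place you diverge is part (3). The paper disposes of it in one line by the universal property of integralization: $h_X=h_{X^{int}}$ for any coherent $X$, so the sheaf condition for $h_X$ is inherited from the fine case, where it is already contained in (2). You instead glue directly for coherent $X$, in two stages: the underlying scheme morphisms descend because the fppf topology is subcanonical, and the log components descend by descent of morphisms of sheaves of monoids. Both arguments work; the paper's reduction is shorter and avoids having to descend morphisms into a merely coherent (possibly non-fine) pullback log structure, while your direct argument has the merit of making visible exactly where strictness of the coverings is used --- your closing observation that strictness is what forces the overlaps $T_i\times_T T_j$ to be honest scheme-theoretic fiber products with fine pullback log structures, so that no integralization (and hence no shrinking to a closed subscheme) intervenes, is precisely the point the paper relegates to its remark on why $\mathbf{CLog}_S$ is introduced. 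One small caveat: when you descend the components $g_i^\flat:\underline{g_i}^\ast M_X\to M_{T_i}$ you attribute this to ``the prestack property of (2)'', but (2) concerns the fibered category of \emph{fine} log structures, whereas $\underline{g}^\ast M_X$ need only be coherent; you should instead invoke plain fppf descent for morphisms of sheaves of monoids over $\mathcal{O}$, which is what you actually use and which holds without the fineness hypothesis.
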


Hence the abstract Gillam's functor applies. Denote
$$\Phi_{log}:\mathbf{LogCFG}/{\mathbf{Sch}_{\underline{S}}}\rightarrow\mathbf{CFG}/\mathbf{Flog}_S$$
the Gillam functor in this case. This paper aim to understand how two notions of log moduli stacks are related under this functor.

Before we prove the proposition, let's state Olsson's comparison theorem of log structures.

$X$ is a scheme, with $X_{fl}$ (resp. $X_{et}$) the ringed topoi over small fppf (resp. \'etale) site. Given log structure $\M\rightarrow \mathcal{O}_{X_{et}}$, we get a prelog structure $\pi^{-1}\M\rightarrow\pi^{-1}\mathcal{O}_{X_{et}}\rightarrow\mathcal{O}_{X_{fl}}$, with the associated log structure $\pi^\ast\M\rightarrow\mathcal{O}_{X_{fl}}$. This gives a functor $\pi^\ast$ from the category of \'etale fine log structures on $X$ to the category of fppf fine log structures on $X$.

\begin{Theorem}(\cite{Ol1} Corollary A.1)\label{Olsson_descent_fine_structure}
The functor $\pi^\ast$ is an equivalence.
\end{Theorem}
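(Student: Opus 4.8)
The plan is to prove that $\pi^*$ is an equivalence by establishing full faithfulness and essential surjectivity separately, the main external input being the comparison of fppf and étale cohomology for smooth group schemes: $\pi_*\mathbb{G}_m = \mathbb{G}_m$ and $R^1\pi_*\mathbb{G}_m = 0$ (Grothendieck), together with the analogous agreement of fppf and étale $H^1$ for finite étale group schemes. Throughout I write $\overline{\mathcal{M}} = \mathcal{M}/\mathcal{O}^*$ for the ghost sheaf and use that a fine log structure $\mathcal{M}$ is recovered from the extension $1 \to \mathcal{O}^* \to \mathcal{M}^{gp} \to \overline{\mathcal{M}}^{gp} \to 0$ together with the submonoid $\overline{\mathcal{M}} \subseteq \overline{\mathcal{M}}^{gp}$.

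For full faithfulness I would reduce to a local computation of morphism sheaves. Étale-locally $\mathcal{M}_1$ admits a chart $P \to \mathcal{M}_1$ with $P$ a fine monoid, so a morphism $\mathcal{M}_1 \to \mathcal{M}_2$ is determined by a compatible monoid map $P \to \Gamma(X_{et},\mathcal{M}_2)$ over $\mathcal{O}$. Hence it suffices to show $\pi_*\pi^*\mathcal{M}_2 = \mathcal{M}_2$ as log structures on $X_{et}$. This follows by applying $\pi_*$ to $1 \to \mathcal{O}^*_{et} \to \mathcal{M}_2^{gp} \to \overline{\mathcal{M}_2}^{gp} \to 0$: the groupified ghost sheaf $\overline{\mathcal{M}_2}^{gp}$ is an étale sheaf whose fppf incarnation is its own pullback, and $\pi_*\mathcal{O}^*_{fl} = \mathcal{O}^*_{et}$, so $\pi_*(\pi^*\mathcal{M}_2)^{gp} = \mathcal{M}_2^{gp}$; intersecting with the pulled-back submonoid recovers $\mathcal{M}_2$. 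Since morphisms of log structures, charts and $\pi^*$ all satisfy étale (hence fppf) descent, this local identity globalizes.

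For essential surjectivity, given a fine fppf log structure $\mathcal{N}$ on $X_{fl}$ I would descend it in two stages. First, the ghost sheaf: fppf-locally $\overline{\mathcal{N}}$ is the constant sheaf on a sharp fine monoid $\overline{P}$, so its isomorphism class is a torsor under the automorphism sheaf of $\overline{P}$, which is a \emph{finite constant}, hence finite étale, group. By the cohomological comparison for finite étale groups, this torsor is induced from an étale one, whence $\overline{\mathcal{N}} = \pi^*\overline{\mathcal{M}}$ for a canonical étale sheaf of fine monoids $\overline{\mathcal{M}}$. Second, with the ghost sheaf fixed, the fine log structures lifting it are classified by the class of $\mathcal{N}^{gp}$ in $\mathrm{Ext}^1(\overline{\mathcal{N}}^{gp}, \mathcal{O}^*_{fl})$; using $\pi_*\mathbb{G}_m = \mathbb{G}_m$ and $R^1\pi_*\mathbb{G}_m = 0$, the natural map from the étale to the fppf Ext group is a bijection, so the extension, and with it the whole log structure, descends to an étale fine log structure $\mathcal{M}$ with $\pi^*\mathcal{M} \cong \mathcal{N}$.

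I expect the main obstacle to be the first stage of essential surjectivity, namely making rigorous that the ghost sheaf of an fppf fine log structure descends to the étale site. One must check that $\overline{\mathcal{N}}$ is genuinely fppf-locally constant with finitely generated sharp fibers, so that its classification is governed by a finite étale automorphism group rather than some non-smooth group scheme for which fppf and étale cohomology diverge in characteristic $p$, and one must track that fineness (coherence and integrality of the monoid) is preserved under the descent. Once the ghost sheaf is handled, the extension step is formal given the vanishing of $R^1\pi_*\mathbb{G}_m$, which is precisely the smoothness of $\mathbb{G}_m$ that rescues the argument in positive characteristic.
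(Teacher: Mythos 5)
The paper offers no proof of this statement: it is quoted verbatim from \cite{Ol1}, Corollary A.1, so the only honest comparison is with Olsson's own argument in the appendix of that paper. Your proposal correctly isolates the decisive cohomological input --- Grothendieck's comparison $H^i_{fppf}(X,\mathbb{G}_m)=H^i_{et}(X,\mathbb{G}_m)$ for the smooth group $\mathbb{G}_m$ --- and the two-step structure (ghost sheaf first, then the extension by $\mathcal{O}^*$) is a reasonable skeleton; the full-faithfulness half is essentially sound, modulo the unverified assertion that the unit $\overline{\mathcal{M}}^{gp}\to\pi_*\pi^*\overline{\mathcal{M}}^{gp}$ is an isomorphism.

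The genuine gap is in the first stage of essential surjectivity, and it is not merely a matter of ``making it rigorous'': the claim that $\overline{\mathcal{N}}$ is fppf-locally the constant sheaf on a sharp fine monoid is false. The ghost sheaf of a fine log structure is constructible but almost never locally constant; for the divisorial log structure on $\mathbb{A}^1$ the stalk of $\overline{\mathcal{M}}$ is $\mathbb{N}$ at the origin and $0$ elsewhere, so no fppf (or any) neighborhood of the origin makes it constant. Consequently $\overline{\mathcal{N}}$ is not a torsor under the finite constant group $\mathrm{Aut}(\overline{P})$, and the proposed $H^1$-classification of its twisted forms does not apply; the data of how stalks specialize along the closure relations is exactly what this argument loses. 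Olsson's actual proof circumvents this by never descending the ghost sheaf in isolation: he shows that a fine log structure in the fppf topology admits a chart \emph{\'etale}-locally, by choosing generators of the finitely generated monoid $\overline{\mathcal{M}}_{\bar x}$ at a geometric point and lifting them to sections of $\mathcal{M}$ over an \'etale neighborhood, the obstruction to lifting a section of $\overline{\mathcal{M}}^{gp}$ to $\mathcal{M}^{gp}$ living in $H^1(\mathbb{G}_m)$, which vanishes on a strictly henselian local ring in both topologies --- this is where your $R^1\pi_*\mathbb{G}_m=0$ is really used. Once a chart $P\to\mathcal{M}$ exists \'etale-locally, the log structure is by definition the pullback of the associated \'etale log structure, and essential surjectivity follows by gluing. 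You would need to restructure your second half along these lines (or stratify and then solve a nontrivial gluing problem) for the argument to go through.
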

\textbf{Remark:} It's easy to generalize this result to the case when $X$ is an algebraic stack.

\begin{proof}[Proof of Proposition]
(1) is just Proposition 2.7 in \cite{KKato}. That integral cover sends strict morphisms to strict morphisms is directly from the construction.

(2) We may assume that the log structures are all on fppf topology by Theorem \ref{Olsson_descent_fine_structure}. Notice that $\mathbf{FLog}_S$ is equivalent to the fibered category over the category of $\underline{S}$-schemes whose objects are morphisms
$$(\underline{X},\M_X)\rightarrow(\underline{S},\M_S)$$,
where $\M_X$ is a fine log structure on $\underline{X}_{fl}$. Since log structures and morphisms of log structures
in the fppf topology may be constructed fppf-locally, it follows that $\mathbf{FLog}_S$ is a stack with respect to the fppf topology.

(3)  We may just assume $X\in\mathbf{FLog}_S$ since $h_X=h_{X^{int}}$ by the universal property of $X^{int}$. Hence $h_X=h_{X^{int}}$ is a sheaf from (2).
\end{proof}
We give topology to $\mathbf{CLog}_S$ where the covers are strict fppf covers. Denote $\mathbf{LogStack}/\mathbf{Sch}_{\underline{S}}$ (resp. $\mathbf{FLogStack}/\mathbf{Sch}_{\underline{S}}$) the 2-category of stacks over $\mathbf{Sch}_{\underline{S}}$ with (resp. fine) log structures, and $\mathbf{Stack}^B/\mathbf{FLog}_S$ the 2-category of stacks over $\mathbf{FLog}_S$ with enough compatible minimal objects (\ref{minimalob}).
\begin{Corollary}\label{logGC}
The functor $$\Phi_{log}:\mathbf{LogCFG}/\mathbf{Sch}_{\underline{S}}\rightarrow\mathbf{CFG}/\mathbf{FLog}_S$$ restricts to 2-cartesian preserving functor
$$\Phi_{log}^{sta}:\mathbf{LogStack}/\mathbf{Sch}_{\underline{S}}\rightarrow\mathbf{Stack}/\mathbf{FLog}_S$$ and restricts to a strict equivalence of 2-categories $$\mathbf{FLogStack}/\mathbf{Sch}_{\underline{S}}\rightarrow\mathbf{Stack}^B/\mathbf{FLog}_S$$
which preserve 2-Cartesian diagram
$$\xymatrix{
\M_2\times_{\M_1}\M_3 \ar[r] \ar[d] & \M_3 \ar[d]^{F_3} \\
\M_2 \ar[r]^{F_2} & \M_1
}$$
with either $F_2$ or $F_3$ integral.
\end{Corollary}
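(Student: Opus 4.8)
The plan is to assemble the corollary from the abstract results already established for the triple $(\mathbf{Sch}_{\underline{S}},\mathbf{FLog}_S,\mathbf{CLog}_S)$, which verifies Setting \textbf{S1}--\textbf{S4} by Proposition \ref{Setting_of_log}. Consequently Proposition \ref{AB_Phi}, Lemma \ref{GC}, Proposition \ref{abGC}, Lemma \ref{AB_Phi_preserveproduct} and Lemma \ref{fiberpro_int} all apply verbatim with $\C=\mathbf{Sch}_{\underline{S}}$, $\FL=\mathbf{FLog}_S$, $\CL=\mathbf{CLog}_S$ and $\tau$ the fppf topology. The proof is then a matter of combining these statements and checking their compatibility.

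First I would verify that $\Phi_{log}$ carries $\mathbf{LogStack}/\mathbf{Sch}_{\underline{S}}$ into $\mathbf{Stack}/\mathbf{FLog}_S$: this is exactly the first implication of Proposition \ref{abGC}, that $F$ being a stack over $\C_\tau$ forces $\Phi_{CFG}\M$ to be a stack over $\FL_\tau$. Since $\Phi_{CFG}$ preserves fiber products by Lemma \ref{AB_Phi_preserveproduct}, and since $2$-fiber products of stacks are again stacks (the log structure being computed object-wise as in the $\mathbf{LogCFG}$ construction), the restricted functor $\Phi^{sta}_{log}$ is $2$-cartesian preserving. This establishes the first assertion.

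For the equivalence on the fine locus I would treat full faithfulness and essential surjectivity separately. Strict full faithfulness is immediate from the last sentence of Proposition \ref{AB_Phi}, restricted from $\mathbf{FLogCat}/\C$ to the full sub-$2$-category $\mathbf{FLogStack}/\mathbf{Sch}_{\underline{S}}$. For essential surjectivity, note that $\mathbf{Stack}^B/\mathbf{FLog}_S$ consists of stacks over $\FL_\tau$ whose underlying fibration satisfies conditions \textbf{B1} and \textbf{B2} of Lemma \ref{GC}, the conjunction being precisely the defining property ``enough compatible minimal objects''. Given such a $\mathcal{G}$, Lemma \ref{GC} produces $(F,\M)\in\mathbf{FLogCFG}/\C$ with $\mathcal{G}\simeq\Phi_{CFG}\M$; it remains to see $(F,\M)$ is a stack. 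Here the converse implication of Proposition \ref{abGC}, valid exactly because $(F,\M)$ is fine, shows that $\Phi_{CFG}\M\simeq\mathcal{G}$ being a stack over $\FL_\tau$ forces $F$ to be a stack over $\C_\tau$, so $(F,\M)\in\mathbf{FLogStack}/\mathbf{Sch}_{\underline{S}}$. Combined with strict full faithfulness, this yields the claimed strict equivalence.

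Finally, for preservation of the displayed $2$-Cartesian square under the integrality hypothesis on $F_2$ or $F_3$: the fiber product $\M_2\times_{\M_1}\M_3$ formed in $\mathbf{LogCFG}/\C$ again lies in $\mathbf{FLogCFG}/\C$ by Lemma \ref{fiberpro_int}, so the square lives in the domain of the equivalence, and Lemma \ref{AB_Phi_preserveproduct} sends it to the corresponding fiber product on the $\mathbf{FLog}_S$ side. The one point demanding care throughout is the interface between the two descriptions of the essential image: Lemma \ref{GC} characterizes it at the level of $\mathbf{CFG}/\FL$, whereas the stack condition must be transported across $\Phi_{CFG}$ via Proposition \ref{abGC}, and one must check that these two restrictions are mutually compatible so that the equivalence of groupoid fibrations genuinely restricts to an equivalence of stacks. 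Once this compatibility is recorded, the remainder is routine bookkeeping with the cited results.
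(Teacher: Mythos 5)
Your proposal is correct and follows exactly the route the paper intends: the corollary is stated without a separate proof precisely because it is the specialization of Proposition \ref{AB_Phi}, Lemma \ref{GC}, Proposition \ref{abGC}, Lemma \ref{AB_Phi_preserveproduct} and Lemma \ref{fiberpro_int} to the triple $(\mathbf{Sch}_{\underline{S}},\mathbf{FLog}_S,\mathbf{CLog}_S)$, whose hypotheses are checked in Proposition \ref{Setting_of_log}. Your assembly of these ingredients, including the use of the converse direction of Proposition \ref{abGC} for essential surjectivity onto $\mathbf{Stack}^B/\mathbf{FLog}_S$, is exactly the argument the paper leaves to the reader.
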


\section{log algebraic spaces and algebraic log spaces}
\subsection{Basic notions on log versions of algebraic space}\

In this subsection we introduce various definitions of algebraic log space and their morphisms.

We fix a fine log scheme $S$, and denote $\mathbf{Sch}_{\underline{S},fppf}$ the site $\mathbf{Sch}_{\underline{S}}$ with fppf topology. We give $\mathbf{Flog}_{S}$ the topology where the covers are strict fppf covers.

Notice that if $X$ is a log scheme, then $h_X$ is a sheaf over $\mathbf{Flog}_{S,fppf}$ (Proposition \ref{Setting_of_log} (3)). We use notation $X$ instead of $h_X\simeq\Phi_{log} X$ in $\mathbf{CFG}/\mathbf{Flog}_S$.

\begin{Definition}
Let $\mathbf{P}$ be a property of morphisms between (resp. fine) log schemes. $\mathbf{P}$ is called smooth (resp. \'etale) locally on the base if: given $f:X\rightarrow Y$, and $\{U_i\rightarrow Y\}_{i\in I}$ a covering in strict log smooth (resp. \'etale) topology, then $f:X\rightarrow Y\in\mathbf{P}$ if and only if $f_{U_i}: X\times_YU_i\rightarrow U_i\in\mathbf{P}$ for any $i\in I$.
\end{Definition}

\begin{Definition}
Let $\mathbf{P}$ be a property of morphisms between (resp. fine) log schemes. $\mathbf{P}$ is called stable under base change if $f:X\rightarrow Y$ has property $\mathbf{P}$ and $U\rightarrow Y$, then the base change $f_U:X\times_YU\rightarrow U$ has property $\mathbf{P}$.
\end{Definition}
\textbf{Remark:} If $\mathbf{P}$ is a property in $\mathbf{Log}$ stable under base change, it may not restricts to a property in $\mathbf{Flog}$ stable under base change. This is because fiber products in $\mathbf{Log}$ and $\mathbf{Flog}$ are not compatible.

\begin{Definition}
If $\mathbf{P}$ is a property of morphisms in $\mathbf{Sch}$, we define the property of `strict $\mathbf{P}$' as $\mathbf{P}^{strict}=\{f\textit{ is strict and } \underline{f}\in \mathbf{P}\}$.
\end{Definition}
\textbf{Remark:} If $\mathbf{P}$ is a property of morphisms in $\mathbf{Sch}$, stable under base change (smooth or \'etale locally on the base), then $\mathbf{P}^{strict}=\{f\textit{ is strict and } \underline{f}\in \mathbf{P}\}$ is a property in $\mathbf{Log}$ ($\mathbf{Flog}$) stable under base change  (smooth or \'etale locally on the base). However, properties such as `morphism whose underlining morphism on scheme is smooth' is not stable under base change (smooth or \'etale locally on the base).

\begin{Definition}\label{DEF_ab_rep}
Let $\mathbf{P}$ (resp. $\mathbf{Q}$) be a property of morphisms in $\mathbf{Log}_S$ (resp. $\mathbf{Flog}_S$), stable under base change and smooth locally on the base.
\begin{enumerate}
  \item Let $f:\X\rightarrow\Y$ be a morphism in $\mathbf{CFG}/\mathbf{Flog}_S$. $f$ is representable if for any morphism $U\rightarrow \Y$ with $U\in\mathbf{Flog}_S$, we have $\X\times_{\Y}U\in\mathbf{Flog}_S$.  $f$ has property $\mathbf{Q}$ if for every $U\rightarrow \Y$ with $U\in\mathbf{Flog}_S$, $\X\times_{\Y}U\rightarrow U$ has property $\mathbf{Q}$.
  \item A morphism in $\mathbf{LogCFG}/\mathbf{Sch}_{\underline{S}}$ is representable if the underlining morphism in $\mathbf{CFG}/\mathbf{Sch}_{\underline{S}}$ is representable by scheme. $f$ has property $\mathbf{P}$ if for every $U\rightarrow \Y$ with $U\in\mathbf{Log}_S$, $\X\times_{\Y}U\rightarrow U$ has property $\mathbf{P}$.
\end{enumerate}
\end{Definition}
\textbf{Remark:} To check that a representable morphism in $\mathbf{LogCFG}/\mathbf{Sch}_{\underline{S}}$ has property $\mathbf{P}$, it's sufficient to check property under base change on strict morphism $U\rightarrow \Y$.
\begin{Definition}
We call a sheaf over $\mathbf{Sch}_{\underline{S},fppf}$ an algebraic space if its diagonal is representable by schemes. and admit an \'etale covering by scheme.
\end{Definition}

\begin{Definition}
$ X$ is an algebraic space with \'etale topology. A log structure of $ X$ is a pair $(\M, \alpha)$ where $\M$ is a coherent sheaf of (resp. fine) monoid and $\alpha:\M\rightarrow \mathcal{O}_{\X}$ is a homomorphism of monoids to multiply monoid of $\mathcal{O}_{ X}$, satisfying that $\alpha|_{\alpha^{-1}\mathcal{O}_{ X}^\ast}:\alpha^{-1}\mathcal{O}_{ X}^\ast\rightarrow \mathcal{O}_{ X}^\ast$ is an isomorphism. $( X,\M)$ is called an \textbf{(resp. fine) log algebraic space}. A log algebraic space is called a \emph{log scheme} if the underlining space is a scheme. $\mathbf{LAlg}_{\underline{S}}$ (resp. $\mathbf{FLAlg}_{\underline{S}}$) stands for the category of log algebraic spaces (resp. fine log algebraic spaces).
\end{Definition}
\textbf{Remark:} the category of log algebraic spaces over $\underline{S}$ is equivalent to the subcategory of $\mathbf{LogCFG}/\underline{S}$ consisting of objects whose underlining groupoid fibration are algebraic spaces.

\begin{Definition}\label{logalg}
We call a sheaf $X$ over $\mathbf{Flog}_{S,fppf}$ an \textbf{algebraic log space} if: (1) The diagonal $\Delta_X$ is representable. (2) $X$ admits a morphism $i:U\rightarrow X$ where $U$ is a fine log scheme, $i$ is strict, surjective, log \'etale. Such $i$ is called a \emph{chart} of the space. $\X\in\mathbf{CFG}/\mathbf{Flog}_{S}$ is called an algebraic log space if it is equivalent to an algebraic log space. An algebraic log space is called a \emph{fine log scheme} if it is isomorphic to a sheaf $h_X$ where $X$ is a fine log scheme. $\mathbf{AlgL}_S$ stands for the category of algebraic log spaces.
\end{Definition}
\textbf{Remark:} By an abstract argument, having representable diagram is equivalent to that every morphism $i:U\rightarrow X$ from fine log scheme is representable. Hence it make sense to say that $i$ is strict, surjective, log \'etale.

\begin{Lemma}\label{Phi_presevescheme}
If $\X\in\mathbf{LogCFG}/\mathbf{Sch}_{\underline{S}}$ is a log scheme, then $\Phi_{log} \X$ is a fine log scheme. If $\X\in\mathbf{FLogCFG}/\mathbf{Sch}_{\underline{S}}$, then $\X$ is a fine log scheme if and only if $\Phi_{log} \X$ is a fine log scheme.
\end{Lemma}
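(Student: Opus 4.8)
The plan is to compute $\Phi_{log}\X$ explicitly as a category fibered over $\mathbf{Flog}_S$ and identify it with a functor of points. Write $X=(\underline{X},\M_X)$ for the coherent log scheme corresponding to $\X$, so that the underlying fibered category is $h_{\underline X}$ (with objects $\phi\colon V\to\underline X$) and the log structure sends $\phi$ to the pullback $\phi^\ast X\in\mathbf{CLog}_S^\ast$. First I would unwind the explicit definition of the Gillam functor $\Phi$ given just before Proposition \ref{AB_Phi}: an object of $\Phi_{log}\X$ lying over $T'\in\mathbf{Flog}_S$ is a pair $(\phi,f)$ where $\phi\colon\underline{T'}\to\underline X$ and $f\colon T'\to\phi^\ast X$ is a morphism in $\mathbf{CLog}_S$ over $\mathrm{id}_{\underline{T'}}$. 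Since a log morphism over the identity of the underlying scheme is exactly a morphism of log structures, the pair $(\phi,f)$ is precisely (via $T'\to\phi^\ast X\to X$ and the universal property of the pullback) a morphism of coherent log schemes $T'\to X$. Thus $\Phi_{log}\X$ is equivalent to the presheaf $T'\mapsto\mathbf{CLog}_S(T',X)=h_X(T')$; checking that morphisms match is routine because $h_{\underline X}$ is a presheaf.

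Having recorded this identification, I would invoke Proposition \ref{Setting_of_log}: the fineization right adjoint gives $h_X=h_{X^{int}}$, so $\Phi_{log}\X\simeq h_{X^{int}}$ is representable by the fine log scheme $X^{int}$, hence is a fine log scheme in the sense of Definition \ref{logalg}. This settles the first assertion. For the forward direction of the second assertion, when $\X$ is itself fine we have $X=X^{int}$, so the same computation gives $\Phi_{log}\X\simeq h_X$, again a fine log scheme.

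For the converse, which is the only nonformal point, suppose $\X\in\mathbf{FLogCFG}/\mathbf{Sch}_{\underline S}$ and $\Phi_{log}\X$ is a fine log scheme, say $\Phi_{log}\X\simeq h_Y$ with $Y$ a fine log scheme. Regarding $Y$ as an object of $\mathbf{FLogCFG}/\mathbf{Sch}_{\underline S}$, the first part gives $h_Y\simeq\Phi_{log}Y$, whence $\Phi_{log}\X\simeq\Phi_{log}Y$ in $\mathbf{CFG}/\mathbf{Flog}_S$. Since both $\X$ and $Y$ lie in $\mathbf{FLogCFG}/\mathbf{Sch}_{\underline S}$ and $\Phi_{log}$ restricted to fine objects is strictly fully faithful (Proposition \ref{AB_Phi}), it reflects equivalences, so $\X\simeq Y$ and $\X$ is a fine log scheme.

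I expect the main work to lie in the first step: correctly matching the ``over $\mathrm{id}$'' condition and the direction of the log-structure morphism so that the pairs $(\phi,f)$ genuinely biject with coherent log morphisms $T'\to X$, and then passing cleanly between the coherent and fine settings through the adjunction identity $h_X=h_{X^{int}}$. Once this identification is in place, the forward direction is immediate and the converse is purely formal, flowing from strict full faithfulness.
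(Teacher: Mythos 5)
Your proposal is correct and follows essentially the same route as the paper: the paper's proof is exactly the identification $\Phi_{log}X=h_{X^{int}}$ (which you helpfully unwind from the definition of the Gillam functor together with Proposition \ref{Setting_of_log}) for the first part, and strict full faithfulness of $\Phi_{log}$ on $\mathbf{FLogCFG}/\mathbf{Sch}_{\underline{S}}$ (Proposition \ref{AB_Phi}) to reflect the equivalence $\X\simeq Y$ for the converse. The only difference is that you spell out the step the paper states without proof.
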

\begin{proof}
The first part is because that for log scheme $X$, $\Phi_{log}X=h_{X^{int}}$. For the second part, if $\Phi_{log} \X\simeq\Phi_{log} X$ for some fine log scheme, then $\X\simeq X$ since $\Phi_{log}|_{\mathbf{FLogCFG}/\mathbf{Sch}_{\underline{S}}}$ is strict fully faithful (Proposition \ref{AB_Phi}).
\end{proof}

The following lemma can be proved by classical argument:
\begin{Lemma}
Given $X, Y, Z\in\mathbf{CFG}/\mathbf{Flog}_{S}$ and morphisms $X\rightarrow Z$, $Y\rightarrow Z$. If $X$, $Y$, $Z$ are algebraic log spaces, then so is $X\times_ZY$.
\end{Lemma}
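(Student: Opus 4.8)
The plan is to verify the two defining conditions of an algebraic log space (Definition \ref{logalg}) for $W := X\times_Z Y$: representability of the diagonal $\Delta_W$, and the existence of a strict, surjective, log \'etale chart from a fine log scheme.

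First I would treat the diagonal. By the remark following Definition \ref{logalg}, $\Delta_W$ is representable if and only if for every pair of fine log schemes $T, T'$ equipped with morphisms to $W$, the fiber product $T\times_W T'$ is a fine log scheme. Unwinding $W = X\times_Z Y$, a morphism $T\to W$ amounts to morphisms $T\to X$ and $T\to Y$ whose composites to $Z$ agree; hence a $V$-point of $T\times_W T'$ is a pair $(V\to T, V\to T')$ whose images agree in both $X$ and $Y$. This yields the canonical identification
$$T\times_W T' \;=\; (T\times_X T')\times_{(T\times_S T')}(T\times_Y T').$$
Since $X$ and $Y$ have representable diagonals, $T\times_X T'$ and $T\times_Y T'$ are fine log schemes; $T\times_S T'$ is a fine log scheme as well; and a fiber product of fine log schemes is again a fine log scheme. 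Therefore $T\times_W T'$ is representable, proving that $\Delta_W$ is representable.

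Next I would build the chart. Choose charts $a:U_X\to X$ and $b:U_Y\to Y$, i.e. strict, surjective, log \'etale morphisms from fine log schemes. Using the composites $U_X\to X\to Z$ and $U_Y\to Y\to Z$, form $U := U_X\times_Z U_Y$; representability of $\Delta_Z$ (again via the remark after Definition \ref{logalg}) guarantees that $U$ is a fine log scheme. The induced morphism $U\to W$ factors as
$$U_X\times_Z U_Y \longrightarrow X\times_Z U_Y \longrightarrow X\times_Z Y,$$
where the first arrow is a base change of $a$ and the second a base change of $b$. Since strictness, surjectivity, and log \'etaleness are stable under base change and composition, the composite $U\to W$ is strict, surjective, and log \'etale, giving the required chart.

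The only delicate points, both routine but requiring care, are the fine-ness of the fiber products and the tracking of representability. Because fiber products in $\mathbf{Flog}_S$ differ from those in $\mathbf{Log}_S$, I would appeal to the existence of fiber products in the category of fine log schemes, obtained from the coherent fiber product via the functor $int$ of Proposition \ref{Setting_of_log}, to justify that each fiber product above stays fine; and I would check that base-changing the representable morphisms $a,b$ along the (a priori non-representable) maps $X\times_Z U_Y\to X$ and $X\times_Z Y\to Y$ makes sense by testing against fine log schemes, where it reduces, via the identity $U\times_W T = U_X\times_X T$ on test points $T$, to the corresponding base change of $a$ or $b$. This bookkeeping is the main, though entirely standard, obstacle.
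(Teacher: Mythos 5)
Your proof is correct and is exactly the ``classical argument'' that the paper invokes for this lemma without writing it out: the identification $T\times_W T' = (T\times_X T')\times_{(T\times_S T')}(T\times_Y T')$ disposes of the diagonal, and the factorization $U_X\times_Z U_Y \rightarrow X\times_Z U_Y \rightarrow X\times_Z Y$ through two base changes of the given charts produces the strict surjective log \'etale cover. The two delicate points you flag --- that fine fiber products (via $int$) represent the sheaf-theoretic fiber products, and that the properties of $a$, $b$ are tested after base change along fine log schemes --- are precisely the right ones, and are handled correctly.
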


In section 2 we only define fiber product $X\times_SY$ when one of $X\rightarrow S$, $Y\rightarrow S$ is integral, now we construct arbitrary fiber product in $\mathbf{FLAlg}_{\underline{S}}$.

\begin{Lemma}\label{integralpart}
The inclusion functor $\mathbf{FLAlg}_{\underline{S}}\subseteq\mathbf{LAlg}_{\underline{S}}$ has a right adjoint `int', which respects strict morphism. If $f:X\rightarrow Y$ is a strict morphism, then $X\times_{Y}Y^{int}\simeq X^{int}$.
\end{Lemma}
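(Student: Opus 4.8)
The plan is to first dispose of the case of log schemes, where the functor \emph{int} is already available by Proposition \ref{Setting_of_log}(1) (Kato's Proposition 2.7 in \cite{KKato}), and then to build the general functor by strict \'etale descent. The key technical input, from which everything else follows formally, is the base change formula for strict morphisms, so I would establish it first. Given a strict morphism $f:X\rightarrow Y$ of coherent log schemes, form the fiber product $P=X\times_Y Y^{int}$ in coherent log schemes and write $p:P\rightarrow X$, $q:P\rightarrow Y^{int}$ for the projections, and $r=f\circ p$. Since $f$ is strict, $\underline{f}^\ast\M_Y\simeq\M_X$, so the log structure on $P$ is the amalgamated sum $p^\ast\M_X\oplus_{r^\ast\M_Y}q^\ast\M_{Y^{int}}$, which collapses to $q^\ast\M_{Y^{int}}$ because $p^\ast\M_X\simeq r^\ast\M_Y$; in particular $q$ is strict and $\M_P$ is fine. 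This computation is the one genuinely log-theoretic point, and I would check it on charts.

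Given that $P$ is fine, I would verify that $p:P\rightarrow X$ has the universal property of $X^{int}$: for a fine $Z$ and a morphism $Z\rightarrow X$, the composite $Z\rightarrow X\rightarrow Y$ factors uniquely through $Y^{int}$ by the adjunction of Proposition \ref{Setting_of_log}(1), and the universal property of the fiber product then yields a unique factorization $Z\rightarrow P$ over $X$. Hence $P\simeq X^{int}$, which is exactly the base change statement $X\times_Y Y^{int}\simeq X^{int}$ at the level of log schemes; preservation of strictness on morphisms is inherited directly from Proposition \ref{Setting_of_log}(1).

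To pass to log algebraic spaces, I would choose a strict \'etale cover $U\rightarrow X$ by a log scheme and set $R=U\times_X U$, a log scheme with strict \'etale projections. Applying the scheme-level \emph{int} gives log schemes $U^{int}$ and $R^{int}$, and the base change formula along the two strict projections $R\rightrightarrows U$ shows $R^{int}\simeq R\times_U U^{int}$ for each projection; hence $R^{int}\rightrightarrows U^{int}$ is again a strict \'etale groupoid (equivalence relation), and I would define $X^{int}$ as the quotient log algebraic space $U^{int}/R^{int}$, which carries a descended fine log structure. Independence of the chosen cover and functoriality follow by applying the same base change formula to a common refinement. The universal property (adjunction) and the preservation of strict morphisms then descend from the scheme case along $U\rightarrow X$: a morphism $Z\rightarrow X$ from a fine log algebraic space is tested on the strict \'etale cover $Z\times_X U\rightarrow Z$, reducing the factorization through $X^{int}$ to the scheme statement already proved. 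Finally the general formula $X\times_Y Y^{int}\simeq X^{int}$ for strict $f$ in $\mathbf{LAlg}_{\underline{S}}$ follows by repeating the formal fiber-product argument of the second paragraph verbatim, now that \emph{int} and fiber products are available in $\mathbf{LAlg}_{\underline{S}}$, the fine-ness of $X\times_Y Y^{int}$ being again checked \'etale-locally by the amalgamated-sum computation.

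The main obstacle I anticipate is the descent step: verifying that $R^{int}\rightrightarrows U^{int}$ is a well-defined strict \'etale groupoid whose quotient is genuinely a fine log algebraic space (representable diagonal and a strict \'etale chart), and that the construction does not depend on the cover. All of this hinges on \emph{int} commuting with strict \'etale base change, so the care is concentrated in the chart computation giving $\M_P\simeq q^\ast\M_{Y^{int}}$ and its compatibility with gluing; the remaining adjunction and functoriality assertions are then formal consequences.
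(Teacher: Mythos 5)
Your argument is correct, but it globalizes differently from the paper. The paper never leaves the \'etale site of $\underline{X}$: on an affine chart $P\to\M$ over $\mathrm{Spec}\,A$ it forms $\mathrm{Spec}(A\otimes_{\mathbb{Z}[P]}\mathbb{Z}[P^{int}])$ with log structure $(P^{int})^a$, checks by the explicit comparison of monoid-algebra diagrams that this represents $h_{(X,\M)}$ on $\mathbf{Flog}_S$, deduces chart-independence from Yoneda, glues the local algebras into a quasi-coherent $\mathcal{O}_X$-algebra $\mathcal{O}_X^{Int}$, and sets $X^{int}=\mathrm{Spec}_X\mathcal{O}_X^{Int}$; this exhibits $X^{int}\to X$ as affine (indeed a closed immersion, used in the subsequent remark), and the base-change formula for strict $f$ is then read off the construction. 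You invert this order: you take the base-change formula as the primary lemma, prove it for log schemes by the amalgamated-sum computation $\M_P\simeq p^\ast\M_X\oplus_{r^\ast\M_Y}q^\ast\M_{Y^{int}}\simeq q^\ast\M_{Y^{int}}$ plus the universal property, and then rebuild $X^{int}$ as the quotient of the \'etale equivalence relation $R^{int}\rightrightarrows U^{int}$. Both routes are sound; yours makes the formula $X\times_Y Y^{int}\simeq X^{int}$ an honest lemma rather than an ``obvious from the construction'' remark, at the price of invoking the quotient theorem for \'etale equivalence relations and a cover-independence check, while the paper's relative-Spec gluing yields affineness of $X^{int}\to X$ for free. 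Two points to tighten in your write-up: when testing the universal property against a fine log algebraic space $Z$, the cover $Z\times_X U$ is again only an algebraic space, so one further scheme cover is needed before quoting the scheme-level statement; and you should note that the groupoid structure maps of $R$ are strict (being pullbacks of the strict cover $U\to X$), which is what lets the functor $int$ carry the whole groupoid diagram, not just the two projections, to a groupoid on $U^{int}$.
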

\begin{proof}
Given a log algebraic space $(X,\M)$.

First assume that $X$ is an affine scheme $SpecA$, and $\M$ has a global chart $P\rightarrow\M$. Define $X^{int}=(SpecA\times_{Z[P]}Z[P^{int}], (P^{int})^a)$, where $P^{int}=image(P\rightarrow P^{gp})$. Then $X^{int}$ represents the sheaf $h_{(X,\M)}$ on $\mathbf{Flog}_S$, since $h_{(X,\M)}(U)$ is equal to the set of diagrams
$$\xymatrix{
Z[P] \ar[r] \ar[d] & Z[\Gamma(\underline{U},\M_U)] \ar[d] \\
A \ar[r] & \Gamma(\underline{U},\mathcal{O}_U)
}$$

which is equivalent to the set of diagrams:
$$\xymatrix{
Z[P^{int}] \ar[r] \ar[d] & Z[\Gamma(\underline{U},\M_U)] \ar[d] \\
A\times_{Z[P]}Z[P^{int}] \ar[r] & \Gamma(\underline{U},\mathcal{O}_U))
}$$
since $\M_U$ is integral.

By Yoneda lemma, the fine log scheme $(SpecA\times_{Z[P]}Z[P^{int}], (P^{int})^a)$ is independent of the choice of $P$. Hence for a general algebraic space $X$, we can choose chart \'etale locally, do the procedure above, and glue the local results to a quasi-coherent sheaf of algebra $\mathcal{O}_X^{Int}$ with a log structure $\M^{Int}$ satisfying commutative diagram:
$$\xymatrix{
\M \ar[r] \ar[d] & \mathcal{O}_X \ar[d] \\
\M^{Int} \ar[r] & \mathcal{O}_X^{Int}
}$$.
Define $X^{int}=Spec_X\mathcal{O}_X^{Int}$ with log structure induced from $\M^{Int}\rightarrow\mathcal{O}_X^{Int}$, which is functorial. One can see that any morphism from fine log algebraic space to $(X,\M)$ factors through $X^{int}$. So the functor $X\mapsto X^{int}$ is the right adjoint to the inclusion functor $\mathbf{FLAlg}_{\underline{S}}\subseteq\mathbf{LAlg}_{\underline{S}}$.

The rest of the Lemma is obvious from the construction.
\end{proof}
\textbf{Remark:} From the construction, the canonical morphism $X^{int}\rightarrow X$ is a closed immersion. We call $X^{int}$ the \emph{integral part of $(X,\M)$}.

\begin{Corollary}
$\mathbf{FLAlg}_{\underline{S}}$ has fiber products.
\end{Corollary}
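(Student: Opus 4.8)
The plan is to mimic Kato's construction of fiber products in the category of fine log schemes: first produce the fiber product in the larger category $\mathbf{LAlg}_{\underline{S}}$ of (coherent) log algebraic spaces, where no integrality obstruction occurs, and then integralize it by means of the right adjoint `int' furnished by Lemma \ref{integralpart}. Concretely, given a diagram $X\rightarrow Z\leftarrow Y$ in $\mathbf{FLAlg}_{\underline{S}}$, I would set
$$X\times_Z^{\mathbf{FLAlg}}Y:=\left(X\times_Z Y\right)^{int},$$
where $X\times_Z Y$ denotes the fiber product taken in $\mathbf{LAlg}_{\underline{S}}$, and claim that this represents the fiber product functor in $\mathbf{FLAlg}_{\underline{S}}$.

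First I would establish that $\mathbf{LAlg}_{\underline{S}}$ admits arbitrary fiber products. The underlying algebraic space is the ordinary fiber product $\underline{X}\times_{\underline{Z}}\underline{Y}$, which exists; for the log structure one takes the amalgamated sum (pushout) $\M_X\oplus_{\M_Z}\M_Y$ of the pulled-back charts, associated to the resulting prelog structure on $\underline{X}\times_{\underline{Z}}\underline{Y}$. Since coherence is preserved under pullback and under pushout of finitely generated monoids, this produces a coherent log structure, and the construction can be carried out \'etale-locally on charts and glued, exactly as in the proof of Lemma \ref{integralpart}. Checking the universal property in $\mathbf{LAlg}_{\underline{S}}$ is then the same computation as for coherent log schemes and is routine.

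With the coherent fiber product in hand, the universal property in $\mathbf{FLAlg}_{\underline{S}}$ follows formally. For any $W\in\mathbf{FLAlg}_{\underline{S}}$, the adjunction of Lemma \ref{integralpart} gives
$$\mathbf{FLAlg}_{\underline{S}}\bigl(W,(X\times_Z Y)^{int}\bigr)\;\cong\;\mathbf{LAlg}_{\underline{S}}\bigl(W,X\times_Z Y\bigr),$$
and the right-hand side is by definition the set of pairs of morphisms $W\rightarrow X$, $W\rightarrow Y$ agreeing over $Z$ in $\mathbf{LAlg}_{\underline{S}}$. Because $\mathbf{FLAlg}_{\underline{S}}$ is a full subcategory of $\mathbf{LAlg}_{\underline{S}}$ and $W,X,Y,Z$ are all fine, these are precisely the compatible pairs in $\mathbf{FLAlg}_{\underline{S}}$. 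Hence $(X\times_Z Y)^{int}$ represents the fiber product functor, as desired.

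The main obstacle is the first step, namely building the coherent fiber product at the level of algebraic spaces rather than schemes: one must choose charts \'etale-locally, form the monoid pushouts, and verify that the resulting local coherent log structures glue compatibly and are independent of the chosen charts. This gluing is the only genuinely geometric input; once it is in place, the passage to the fine category via `int' is purely formal, resting entirely on the adjunction and fullness recorded in Lemma \ref{integralpart}.
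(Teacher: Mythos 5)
Your proposal is correct and is essentially the paper's own argument: the paper likewise forms the fiber product in $\mathbf{LAlg}_{\underline{S}}$ and then applies the integralization functor `int' from Lemma \ref{integralpart}, with the universal property following formally from the adjunction. You simply spell out the coherent fiber product and the Hom-set computation in more detail than the paper does.
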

\begin{proof}
The fiber product in $\mathbf{FLAlg}_{\underline{S}}$ can be constructed as follows: first take fiber products $Z$ in $\mathbf{LAlg}_{\underline{S}}$, then take the integral part of $Z$. This construction is compatible with \ref{fiberpro_int}.
\end{proof}

The next proposition is a restatement of Lemma \ref{integralpart}:

\begin{Proposition}
$\Phi_{log}|_{\mathbf{FLAlg}_{\underline{S}}}$ preserves fiber products.
\end{Proposition}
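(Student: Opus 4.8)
The plan is to factor the fine fiber product through the coherent one and then observe that $\Phi_{log}$ cannot distinguish a coherent log algebraic space from its integral part. Concretely, let $X\rightarrow Z\leftarrow Y$ be morphisms in $\mathbf{FLAlg}_{\underline{S}}$, and write $W=X\times_Z Y$ for the fiber product computed in $\mathbf{LAlg}_{\underline{S}}$, i.e. carrying the coherent log structure produced by the fiber-product construction of Section 2. By the Corollary immediately preceding this statement, the fiber product of $X,Y$ over $Z$ taken in $\mathbf{FLAlg}_{\underline{S}}$ is the integral part $W^{int}$ furnished by Lemma \ref{integralpart}. So what has to be shown is that $\Phi_{log}W^{int}$ is the $2$-fiber product of $\Phi_{log}X$ and $\Phi_{log}Y$ over $\Phi_{log}Z$ in $\mathbf{CFG}/\mathbf{Flog}_S$.

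I would reduce the argument to three identifications: (i) the fiber product of $X,Y$ over $Z$ in $\mathbf{FLAlg}_{\underline{S}}$ equals $W^{int}$; (ii) $\Phi_{log}W^{int}\simeq\Phi_{log}W$ in $\mathbf{CFG}/\mathbf{Flog}_S$; and (iii) $\Phi_{log}W\simeq\Phi_{log}X\times_{\Phi_{log}Z}\Phi_{log}Y$. Step (i) is just the construction in the Corollary. Step (iii) is Lemma \ref{AB_Phi_preserveproduct} ($\Phi_{CFG}$, hence $\Phi_{log}$, preserves fiber products), applied to the representable objects $X,Y,Z$ of $\mathbf{LAlg}_{\underline{S}}\subseteq\mathbf{LogCFG}/\mathbf{Sch}_{\underline{S}}$: the fiber product that $\Phi$ was shown to preserve is precisely the coherent one defining $W$, whose log structure is the fiber product taken in $\mathbf{CLog}_S$. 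Chaining (i)--(iii) yields $\Phi_{log}(X\times_Z Y)\simeq\Phi_{log}X\times_{\Phi_{log}Z}\Phi_{log}Y$, the desired statement.

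The only point needing real argument is (ii), and I would prove it on functors of points. Since $\Phi_{log}$ sends a coherent log algebraic space to its representable sheaf on $\mathbf{Flog}_S$ (with $h_W(U)=\mathbf{CLog}_S(U,W)$ for $U\in\mathbf{Flog}_S$), it suffices to exhibit $h_W\simeq h_{W^{int}}$ there. This is exactly the adjunction of Lemma \ref{integralpart}: for any \emph{fine} test object $U\in\mathbf{Flog}_S$, a morphism $U\rightarrow W$ in $\mathbf{CLog}_S$ factors uniquely through $W^{int}$ because $U$ is fine, so $\mathbf{CLog}_S(U,W)=\mathbf{Flog}_S(U,W^{int})$ naturally in $U$. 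This is the same computation that gives $\Phi_{log}X=h_{X^{int}}$ in Lemma \ref{Phi_presevescheme}, now read for $W$.

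The main obstacle — and the reason the assertion is not automatic from Lemma \ref{AB_Phi_preserveproduct} — is precisely the incompatibility of the two fiber products: $\Phi_{log}$ preserves the \emph{coherent} fiber product $W$, whereas the fiber product in $\mathbf{FLAlg}_{\underline{S}}$ is $W^{int}$, and these genuinely differ in $\mathbf{LAlg}_{\underline{S}}$. The whole content of the proposition is thus the observation (ii) that passage to the integral part is invisible to $\mathbf{Flog}_S$-points, so $\Phi_{log}$ collapses the discrepancy; this is why the text calls the statement a restatement of Lemma \ref{integralpart}. As a consistency check I would note, via Lemma \ref{fiberpro_int}, that if one of $X\rightarrow Z$, $Y\rightarrow Z$ is integral then $W$ is already fine, so $W=W^{int}$ and steps (i)--(ii) become trivial, leaving only the identity (iii).
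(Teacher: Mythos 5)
Your argument is correct and is exactly the route the paper intends: the paper gives no written proof beyond calling the proposition a restatement of Lemma \ref{integralpart}, and your steps (i)--(iii) — identifying the fine fiber product as $W^{int}$ of the coherent one, using the adjunction to see $h_W\simeq h_{W^{int}}$ on fine test objects, and invoking Lemma \ref{AB_Phi_preserveproduct} for the coherent fiber product — are precisely the details being left implicit. Nothing to correct.
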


We can define properties of algebraic log space as we do to algebraic space.

\begin{Definition}
A property $\mathbf{P}$ of fine log schemes is of a local nature for the smooth (resp. \'etale) topology. If for any surjective, strict log smooth (resp. \'etale) morphism $X\rightarrow Y$, $Y\in\mathbf{P}$ if and only if $X\in\mathbf{P}$.
\end{Definition}
\textbf{Remark:} Examples are locally Noetherian, regular, normal, $S_n$, Cohen-Macaulay, reduced, of characteristic $p$, saturated, log regular (\cite{KKato2}), etc.

\begin{Definition}
Let $\mathbf{P}$ be a property of fine log schemes of a local nature for the \'etale topology. An algebraic log space $X$ has property $\mathbf{P}$ if for one (and hence for every) \'etale chart $U\rightarrow X$, $U$ has property $\mathbf{P}$. If $\mathbf{Q}$ is a property of schemes local nature for the \'etale topology, A log algebraic space has property $\mathbf{Q}$ if the underlining algebraic space has property $\mathbf{Q}$.
\end{Definition}
Hence we can say an algebraic log space: locally Noetherian, regular, normal, $S_n$, Cohen-Macaulay, reduced, of character $p$, saturated, log regular, etc.

\begin{Definition}
An algebraic log space $X$ is quasi-compact if there is a chart $U\rightarrow X$ such that $U$ is a quasi-compact fine log scheme. A morphism of algebraic log spaces $X\rightarrow Y$ is quasi-compact if for any morphism $U\rightarrow X$ from a quasi-compact fine log scheme $U$, $U\times_ Y X$ is quasi-compact. We say that $X\rightarrow Y$ is quasi-separated if the diagonal $\Delta_{X/Y}: X\rightarrow  X\times_Y X$ is quasi-compact. $X$ is called noetherian if it is quasi-compact, quasi-separate over $Spec\mathbb{Z}$, and locally noetherian.
\end{Definition}

\begin{Definition}
A property $\mathbf{P}$ of morphism in $\mathbf{Flog}$ is called smooth (resp. \'etale) local on the source-and-target if for any commutative diagram
$$\xymatrix{
X'\ar[r]^f \ar[d]^\pi & Y' \ar[d]^\varphi \\
X \ar[r]^g & Y
}$$
where $\pi$, $\varphi$ are surjective strict log smooth (resp. log \'etale) morphism, $f\in \mathbf{P}$ if and only if $g\in \mathbf{P}$.
\end{Definition}
\textbf{Remark:} The classical examples of morphisms of smooth local on the source-and-target are locally of finite representation, flat, smooth, normal, Cohen-Macaulay, $S_n$, etc (of the underling morphism). Examples of \'etale local on the source-and-target are strict \'etale, unramified.

The next result of M.Olsson is needed to define log smooth (\'etale, unramified, flat) morphisms.
\begin{Theorem}(M.Olsson \cite{Ol1})\label{Olsson_Log_X}
Let $\mathbf{Log}_S\rightarrow \mathbf{Sch}_{\underline{S}}$ be the associate groupoid fibration of $\mathbf{FLog}_S\rightarrow \mathbf{Sch}_{\underline{S}}$. Then $\mathbf{Log}_S$ is an algebraic stack locally of finite presentation $\underline{S}$, with locally separated, and finite presentation diagonal $\Delta_{\mathbf{Log}_S/\underline{S}}$. The assignment
$$S\mapsto \mathbf{Log}_S$$
is a 2-functor $$\textit{(category of fine log schemes)}\rightarrow\textit{(2-category of algebraic stacks)}$$
where $\mathbf{Log}(f)$ is always Asp-representable. Moreover, $f$ is log smooth (\'etale, unramified, flat) if and only if $\mathbf{Log}(f)$ is smooth (\'etale, unramified, flat).
\end{Theorem}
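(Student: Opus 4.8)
The plan is to prove algebraicity by exhibiting an explicit smooth atlas built from chart data, following Olsson's original argument. First I would unwind the definition: over a test scheme $T\rightarrow\underline{S}$, an object of $\mathbf{Log}_S$ is a fine log structure $M_T$ on $T$ together with a morphism $f^\ast M_S\rightarrow M_T$ (equivalently, a morphism of fine log schemes $(T,M_T)\rightarrow S$ over $f:T\rightarrow\underline{S}$), and morphisms in the fiber are isomorphisms of such data. The diagonal is the easiest piece to handle first: for two such objects over $T$, the sheaf $\mathrm{Isom}_T$ of isomorphisms of log structures compatible with the maps from $f^\ast M_S$ becomes, after passing to charts, a locally closed condition inside the diagonalizable group $D(P^{gp})=\mathrm{Spec}\,\mathbb{Z}[P^{gp}]$ attached to a chart monoid $P$; hence it is representable by a scheme of finite presentation over $T$, locally separated but in general not separated (the relevant locus in the torus need not be closed). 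This yields that $\Delta_{\mathbf{Log}_S/\underline{S}}$ is representable, locally separated, and of finite presentation.

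For the atlas, I would use the fundamental local fact that every fine log structure \'etale-locally admits a chart $P\rightarrow M_T$ by a fine monoid $P$, taken compatible with a fixed chart $Q\rightarrow M_S$ via a monoid morphism $Q\rightarrow P$. For each such datum $(Q\rightarrow P)$ I would form the quotient stack
$$\mathcal{A}_{Q\rightarrow P}=\Big[\big(\underline{S}\times_{\mathrm{Spec}\,\mathbb{Z}[Q]}\mathrm{Spec}\,\mathbb{Z}[P]\big)\big/D(P^{gp})\Big],$$
with $D(P^{gp})$ acting through the $P^{gp}$-grading; this classifies log structures equipped with a chart by $P$, the torsor quotient accounting for the non-uniqueness of charts up to units. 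Since $\mathrm{Spec}\,\mathbb{Z}[P]$ is of finite presentation over $\mathbb{Z}$ (as $P$ is finitely generated) and $D(P^{gp})$ is flat of finite presentation, $\mathcal{A}_{Q\rightarrow P}$ is an algebraic stack of finite presentation over $\underline{S}$, and the tautological morphism $\mathcal{A}_{Q\rightarrow P}\rightarrow\mathbf{Log}_S$ sends chart data to the underlying log structure with its map from $f^\ast M_S$.

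The heart of the argument is to show that these morphisms are representable and smooth (in fact \'etale onto their images once one restricts to \emph{neat} charts), and that as $(Q\rightarrow P)$ ranges over all chart data they are jointly surjective; composing with a scheme cover of each $\mathcal{A}_{Q\rightarrow P}$ then furnishes a smooth surjective atlas by a scheme, and the Laumon--Moret-Bailly criterion (a stack with representable diagonal admitting a smooth surjective cover is algebraic) gives that $\mathbf{Log}_S$ is an algebraic stack locally of finite presentation over $\underline{S}$. The main obstacle lies precisely here: charts are highly non-canonical, so controlling smoothness requires the neat-chart technique together with an infinitesimal deformation computation showing that the fibers of $\mathcal{A}_{Q\rightarrow P}\rightarrow\mathbf{Log}_S$ are torsors under smooth group schemes; this is where the finiteness of $P$ is essential.

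Finally, a morphism $f:S'\rightarrow S$ induces $\mathbf{Log}(f):\mathbf{Log}_{S'}\rightarrow\mathbf{Log}_S$ by composing a log morphism $(T,M_T)\rightarrow S'$ with $f$, and strictness of the construction makes $S\mapsto\mathbf{Log}_S$ a strict 2-functor. Asp-representability of $\mathbf{Log}(f)$ follows by base-changing to a scheme $T\rightarrow\mathbf{Log}_S$ and checking, again via charts, that the fiber product parametrizes liftings of a fixed log structure along $f^\ast M_S\rightarrow M_T$, representable by an algebraic space of finite presentation. For the last clause I would reduce, through the chart atlas, to Kato's monoid-theoretic criterion: $f$ is log smooth (resp. \'etale) precisely when \'etale-locally there are charts $Q\rightarrow P$ with $\ker(Q^{gp}\rightarrow P^{gp})$ and the torsion of $\mathrm{coker}(Q^{gp}\rightarrow P^{gp})$ (resp. the whole $\mathrm{coker}$) finite of order invertible on $S'$, together with a strict smooth (resp. \'etale) map to the fiber-product chart model; under the identifications above this is exactly the condition that the induced morphism of quotient stacks be smooth (resp. \'etale), and the flat and unramified cases match termwise the same way. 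This matching, routine once the atlas is in place, is the second point where care is needed.
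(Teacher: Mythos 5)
The paper does not prove this statement at all: it is imported verbatim as Olsson's theorem, with the algebraicity and the diagonal properties coming from \cite{Ol1} (Theorem 1.1 and its Corollary 5.25 on the diagonal) and the equivalence ``$f$ log smooth/\'etale $\Leftrightarrow$ $\mathbf{Log}(f)$ smooth/\'etale'' being Olsson's Theorem 4.6. So there is no in-paper argument to compare against; what you have written is a reconstruction of Olsson's original proof, and as a roadmap it is faithful: representability, local separatedness (but not separatedness) and finite presentation of the diagonal via Isom-sheaves of log structures computed on charts inside diagonalizable groups; a cover by chart stacks; and reduction of the last clause to Kato's chart-theoretic criterion for log smoothness/\'etaleness.

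Two caveats. First, a technical slip: in the relative setting the group acting on $\underline{S}\times_{\operatorname{Spec}\mathbb{Z}[Q]}\operatorname{Spec}\mathbb{Z}[P]$ should be $D(P^{gp}/Q^{gp})$, not $D(P^{gp})$ --- a change of chart for $M_T$ compatible with the \emph{fixed} chart $Q\rightarrow M_S$ is a homomorphism $P\rightarrow\mathcal{O}_T^{\ast}$ killing $Q$, so the torsor structure is under $D(\operatorname{coker}(Q^{gp}\rightarrow P^{gp}))$. Second, the two places you yourself flag as ``the heart of the argument'' (smoothness and joint surjectivity of the chart maps, via the deformation theory of fine log structures and the neat-chart device) are precisely where all of Olsson's work lies; as written your text identifies them correctly but does not carry them out, so the proposal is an accurate outline of the cited proof rather than a self-contained one. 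Given that the paper itself treats the theorem as a black box, that is an appropriate level of detail, but you should cite \cite{Ol1} for the deferred steps rather than leave them as assertions.
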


\begin{Lemma}
Log smooth, log flat, are smooth local on the source-and-target. Log \'etale, log unramified are \'etale local on the source-and-target.
\end{Lemma}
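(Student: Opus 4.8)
The plan is to transport the question, via M.~Olsson's stack $\mathbf{Log}$ (Theorem \ref{Olsson_Log_X}), to the corresponding statement for morphisms of algebraic stacks, where source-and-target locality of smoothness, flatness, \'etaleness and unramifiedness is classical (\cite{dejong}). Applying the $2$-functor $\mathbf{Log}$ to the given commutative square produces a $2$-commutative square of algebraic stacks
$$\xymatrix{
\mathbf{Log}_{X'}\ar[r]^{\mathbf{Log}(f)} \ar[d]_{\mathbf{Log}(\pi)} & \mathbf{Log}_{Y'} \ar[d]^{\mathbf{Log}(\varphi)} \\
\mathbf{Log}_X \ar[r]^{\mathbf{Log}(g)} & \mathbf{Log}_Y
}$$
and, by the last assertion of Theorem \ref{Olsson_Log_X}, $f$ (resp. $g$) is log smooth, log flat, log \'etale, log unramified if and only if $\mathbf{Log}(f)$ (resp. $\mathbf{Log}(g)$) is smooth, flat, \'etale, unramified. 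Thus it suffices to show that the vertical morphisms $\mathbf{Log}(\pi)$ and $\mathbf{Log}(\varphi)$ are smooth (resp. \'etale) surjective, and then to invoke the classical source-and-target locality for morphisms of algebraic stacks.

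The key point is therefore to understand $\mathbf{Log}$ on a \emph{strict} morphism. First I would record that for any strict $\pi:X'\rightarrow X$ the square
$$\xymatrix{
\mathbf{Log}_{X'} \ar[r]^{\mathbf{Log}(\pi)} \ar[d] & \mathbf{Log}_{X} \ar[d] \\
\underline{X'} \ar[r]^{\underline{\pi}} & \underline{X}
}$$
with vertical maps the structural projections to the underlying schemes, is $2$-Cartesian. This is a direct check on $T$-points: since $\pi$ is strict, $\M_{X'}=\underline{\pi}^\ast\M_X$, so giving a log morphism $(T,\M_T)\rightarrow X'$ is the same as giving a scheme morphism $\underline{T}\rightarrow\underline{X'}$ together with a log morphism $(T,\M_T)\rightarrow X$ whose underlying scheme morphism agrees with it after composition with $\underline{\pi}$. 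Consequently $\mathbf{Log}(\pi)$ is the base change of $\underline{\pi}$ along $\mathbf{Log}_X\rightarrow\underline{X}$. When $\pi$ is in addition surjective strict log smooth (resp. log \'etale), $\underline{\pi}$ is smooth (resp. \'etale) and surjective, and both properties are stable under base change, so $\mathbf{Log}(\pi)$ is smooth (resp. \'etale) and surjective; the same reasoning applies to $\varphi$.

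With the vertical maps of the stack square now known to be smooth (resp. \'etale) surjections, the classical fact that smoothness and flatness are smooth local on the source-and-target, and that \'etaleness and unramifiedness are \'etale local on the source-and-target, for morphisms of algebraic stacks (\cite{dejong}), yields $\mathbf{Log}(f)\in\mathbf{P}\iff\mathbf{Log}(g)\in\mathbf{P}$ for $\mathbf{P}$ the relevant property; translating back through Theorem \ref{Olsson_Log_X} gives the desired equivalence for $f$ and $g$. I expect the main obstacle to lie in the strictness step: Theorem \ref{Olsson_Log_X} only asserts that $\mathbf{Log}$ preserves smoothness and \'etaleness, not surjectivity, so one genuinely needs the Cartesian description above to upgrade $\mathbf{Log}(\pi)$ and $\mathbf{Log}(\varphi)$ to surjections. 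One must also take care to apply the source-and-target locality with matching topologies, namely smooth coverings for log smooth and log flat, and \'etale coverings for log \'etale and log unramified, which is exactly the hypothesis on $\pi,\varphi$ in the two respective cases.
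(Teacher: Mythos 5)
Your proposal is correct and follows essentially the same route as the paper: both transport the statement through Olsson's stack $\mathbf{Log}$ via Theorem \ref{Olsson_Log_X} and then invoke classical source-and-target locality for algebraic stacks. The only difference is cosmetic: the paper obtains smoothness of $\mathbf{Log}(\pi)$ from the last assertion of Theorem \ref{Olsson_Log_X} and surjectivity from a one-line remark, whereas you derive both at once from the (correct) observation that for strict $\pi$ the square expressing $\mathbf{Log}_{X'}$ as the base change of $\mathbf{Log}_X$ along $\underline{X'}\rightarrow\underline{X}$ is $2$-Cartesian.
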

\begin{proof}
By Theorem \ref{Olsson_Log_X}, the lemma follows from that smooth, flat, representable morphisms between algebraic stack are smooth local on the source-and-target. And \'etale, unramified are \'etale local on the source-and-target. Notice that if $X\rightarrow Y$ is strict surjective, then $\mathbf{Log}_X\rightarrow\mathbf{Log}_Y$ is surjective.
\end{proof}

\begin{Definition}\label{DEF_morphisms}
Let $\mathbf{P}$ be a property of morphisms in $\mathbf{Flog}$, \'etale local on the source-and-target. A morphism $\X\rightarrow\Y$ between algebraic log spaces (fine log algebraic spaces) has property $\mathbf{P}$ if for one (and hence for every) commutative diagram
$$\xymatrix{
X\ar[r]^f \ar[d] & Y \ar[d] \\
\X \ar[r] & \Y
}$$
where the vertical arrows are strict log \'etale cover, $f$ has property $\mathbf{P}$.
\end{Definition}
Hence we can define locally of finite representation, flat, smooth, normal, Cohen-Macaulay, $S_n$, strict, integral, saturated, Kummer, Cartier, log smooth, log flat,log \'etale, log unramified morphism between algebraic log spaces.

\textbf{Remark:}
\begin{enumerate}
  \item We can also form the definition of formal log smooth (\'etale, unramified), and it turns out that log smooth (\'etale, unramified) is equivalent to locally of finite representation and formal log smooth (\'etale, unramified).
  \item If $\mathbf{P}$ is a property of morphisms smooth (\'etale) local on the source-and-target, then the associated property of morphisms of algebraic log spaces is also smooth (\'etale) local on the source-and-target.
  \item Let $\mathbf{P}$ be a property of morphisms between fine log schemes, \'etale local on the source-and-target, stable under base change and \'etale local on base. If the morphism we consider is representable (Definition \ref{DEF_ab_rep}), then the two definitions of property $\mathbf{P}$ are compatible.
\end{enumerate}
\textbf{Warning:} Suppose that $f:X\rightarrow Y$ is a morphism in $\mathbf{FLAlg}$. If $f$ is smooth under Definition \ref{DEF_morphisms}, it's not necessary that $\underline{f}$ is smooth. However, if $f$ is integral, then $\underline{f}$ is smooth. This suggests that `integral smoothness' is more natural than `smoothness'.

\subsection{Correspondence between log algebraic spaces and algebraic log spaces}\

Properties of $\Phi_{log}:\mathbf{LogCFG}/\mathbf{Sch}_{\underline{S}}\rightarrow\mathbf{CFG}/\mathbf{Flog}_S$ will be studied in this subsection. Although the topic is about sheaves, we'll prove results as general as possible.

\begin{Lemma}\label{Phi_rep_morph}
Let $\mathbf{P}$ be a property of morphism between schemes, \'etale local on the source-and-target stable under base change and smooth locally on the base. $f:\X\rightarrow \Y$ is a morphism in $\mathbf{LogCFG}/\mathbf{Sch}_{\underline{S}}$.
\begin{itemize}
  \item $\Phi_{log}$ sends a representable morphism $f:\X\rightarrow \Y$ to a representable morphism. If $f$ has property $\mathbf{P}^{strict}$, then $\Phi_{log}f$ has property $\mathbf{P}^{strict}$.

  \item If $\X\in\mathbf{FLogCFG}/\mathbf{Sch}_{\underline{S}}$, then $f$ is representable if and only if $\Phi_{log}f$ is representable.  If $f\in\mathbf{FLogCFG}/\mathbf{Sch}_{\underline{S}}$, then $f$ has property $\mathbf{P}^{strict}$ if and only if $\Phi_{log}f$ has property $\mathbf{P}^{strict}$.
\end{itemize}
\end{Lemma}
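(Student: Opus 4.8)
The plan is to reduce every assertion to a single base-change computation and then feed it into the three structural facts already available: that $\Phi_{log}$ preserves fibre products (Lemma \ref{AB_Phi_preserveproduct}), that it is strictly fully faithful on fine objects (Proposition \ref{AB_Phi}), and that it matches log schemes with fine log schemes, with an ``if and only if'' on the fine side (Lemma \ref{Phi_presevescheme}). The basic move I would make is the following: given $U\in\mathbf{Flog}_S$ and a $1$-morphism $U\to\Phi_{log}\Y$, full faithfulness of $\Phi_{log}$ on the fine object $U$ (Proposition \ref{AB_Phi}) produces a unique lift $\tilde U\to\Y$ in $\mathbf{LogCFG}/\mathbf{Sch}_{\underline S}$ with $\Phi_{log}\tilde U=U$; then Lemma \ref{AB_Phi_preserveproduct} gives a canonical identification $\Phi_{log}\X\times_{\Phi_{log}\Y}U\simeq\Phi_{log}(\X\times_\Y\tilde U)$. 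Thus testing a property of $\Phi_{log}f$ over fine bases is the same as applying $\Phi_{log}$ to the base changes of $f$; and by the Remark after Definition \ref{DEF_ab_rep} I may test the property of $f$ itself on strict (in particular fine) base changes only.

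For the forward implications I would first treat representability. If $f$ is representable then $\underline f$ is representable by schemes, so the underlying of $\X\times_\Y\tilde U$ is the scheme $\underline\X\times_{\underline\Y}\underline U$; hence $\X\times_\Y\tilde U$ is a (coherent) log scheme, and Lemma \ref{Phi_presevescheme} turns $\Phi_{log}(\X\times_\Y\tilde U)$ into a fine log scheme, which is exactly representability of $\Phi_{log}f$. For property $\mathbf{P}^{strict}$ I would use that strictness of $f$ forces the base change $\X\times_\Y\tilde U\to\tilde U$ to be strict; since $\tilde U$ is fine, strictness means the log structure on $\X\times_\Y\tilde U$ is pulled back from $\tilde U$ and is therefore already fine, so no integralization intervenes. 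Consequently $\Phi_{log}(\X\times_\Y\tilde U)=h_{\X\times_\Y\tilde U}$ has the same underlying scheme morphism, which lies in $\mathbf P$ by hypothesis, and remains strict; this is $\mathbf{P}^{strict}$ for $\Phi_{log}f$. Here I lean on Proposition \ref{Setting_of_log}(1) and Lemma \ref{integralpart} to know that integralization preserves strict morphisms, although in the strict case it acts trivially.

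For the converse implications (the fine hypotheses) I would run the same computation backwards, using the ``only if'' half of Lemma \ref{Phi_presevescheme} and strict full faithfulness to reflect strictness and the scheme property $\mathbf P$. The genuine obstacle is the incompatibility between fibre products of fine log objects in $\mathbf{Flog}_S$ and in the coherent category $\mathbf{CLog}_S$: over a general, possibly non-integral, base coming from $\Y$ the fibre product $\X\times_\Y\tilde U$ need not be fine, so I cannot directly invoke Lemma \ref{Phi_presevescheme} to descend ``fine log scheme'' back to ``log scheme with scheme-theoretic underlying''. The fineness hypotheses are precisely what removes this gap. For $\mathbf{P}^{strict}$ the relevant base changes are strict, so their log structures are pulled back and stay fine, integralization is the identity, and full faithfulness of $\Phi_{log}$ on these fine objects reflects both strictness of $F^\dag$ and the property $\mathbf P$ of the underlying scheme morphism. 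For representability with $\X$ fine I would reduce an arbitrary scheme-valued point $\underline U\to\underline\Y$ to its integral part $U^{int}$, where now both $\X$ and the base are fine, so that $\X\times_\Y U^{int}$ is again fine (a fibre product of fine log structures being integral, hence fine), whence Lemma \ref{Phi_presevescheme} descends it and shows $\underline\X\times_{\underline\Y}\underline{U^{int}}$ is a scheme. The step that must be carried out with care, and where the fineness of $\X$ is essential, is verifying that passing to $U^{int}$ loses no points of the scheme-theoretic fibre product; this is the heart of the matter and the main place where a careful direction-of-arrows argument, rather than formal manipulation, is required.
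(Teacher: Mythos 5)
Your overall strategy coincides with the paper's: test against fine log schemes $U\to\Phi_{log}\Y$, lift through strict full faithfulness (Proposition \ref{AB_Phi}), use that $\Phi_{log}$ preserves fibre products (Lemma \ref{AB_Phi_preserveproduct}), and then invoke Lemma \ref{Phi_presevescheme} and the observation that a strict base change of a strict morphism carries the pulled-back (hence fine) log structure, so that no integralization intervenes. For the forward implications and for the two directions of $\mathbf{P}^{strict}$ your argument is the paper's argument, and it is correct.

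The one place you diverge is the converse for representability, and there your route has a genuine gap. You propose to replace an arbitrary $\underline U\to\underline\Y$ by $U^{int}$ and justify the fineness of $\X\times_\Y U^{int}$ by the claim that ``a fibre product of fine log structures is integral, hence fine.'' That claim is false in general --- it is exactly the failure of $\mathbf{Flog}_S$ to be closed under fibre products in $\mathbf{CLog}_S$ that forces the paper to introduce $int$ and the integrality hypothesis in Lemma \ref{fiberpro_int} --- and, worse, $U^{int}\to U$ is a closed immersion, so $\underline\X\times_{\underline\Y}\underline{U^{int}}$ being a scheme says nothing directly about $\underline\X\times_{\underline\Y}\underline U$; you flag this yourself as unresolved. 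The paper's (and the correct) move is different: give $\underline U$ the pullback log structure from $\Y$, so that $u:U\to\Y$ is strict (but $U$ possibly only coherent). Since $u$ is strict, the fibre-product log structure on $V=\X\times_\Y U$ is the pullback of $\M_\X$, hence fine because $\X$ is fine --- fineness of $U$ is never needed. The underlying fibration of $V$ is $\underline\X\times_{\underline\Y}\underline U$ independently of any log structures (setting \textbf{S3} and the construction of fibre products in $\mathbf{LogCFG}/\C$), and $\Phi_{log}V\simeq\Phi_{log}\X\times_{\Phi_{log}\Y}h_{U^{int}}$ is a fine log scheme by the representability of $\Phi_{log}f$ tested on the fine log scheme $U^{int}$; strict full faithfulness on $\mathbf{FLogCFG}/\mathbf{Sch}_{\underline S}$ then identifies $V$ with that fine log scheme, so $\underline V$ is a scheme. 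No points are lost because one never shrinks $U$: the integralization is applied only to the test object inside $\mathbf{Flog}_S$, not to the fibre product whose underlying space one must control.
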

\begin{proof}
Consider the left cartesian diagram
$$\xymatrix{
V \ar[r]^{f'} \ar[d] & U \ar[d]^{u}       & \Phi_{log} V \ar[r] \ar[d] & \Phi_{log} U \ar[d]\\
\X \ar[r]^f & \Y                   & \Phi_{log} \X \ar[r]^{\Phi_{log}(f)} & \Phi_{log} \Y
}$$
where $U$ is a fine log scheme. The righthand diagram is cartesian due to Lemma \ref{AB_Phi_preserveproduct}. By Proposition \ref{AB_Phi}, $Hom(U,\Y)=Hom(\Phi_{log} U,\Phi_{log} \Y)$. If $f:\X\rightarrow \Y$ is representable, then $V$ is a log scheme (not necessarily fine). It follows that $\Phi_{log} V=h_{V^{int}}$ is a fine log scheme and $\Phi_{log}f$ is representable.

If $f$ is strict, then $V$ is fine and $f'$ is strict, hence $\Phi_{log}$ preserves $\mathbf{P}^{strict}$.

For the second part, let $\X\in\mathbf{FLogCFG}/\mathbf{Sch}_{\underline{S}}$. Assume that the diagrams we considered above are cartesian and $u$ is strict, then $V\in\mathbf{FLAlg}/\mathbf{Sch}_{\underline{S}}$. By the assumption that $\Phi_{log} f$ is representable, $\Phi_{log} V=h_{V'}$ is a fine log scheme. Hence $V\simeq V'$ is a fine log scheme because $\Phi_{log}|_{\mathbf{FLogCFG}/\mathbf{Sch}_{\underline{S}}}$ is strict fully faithful (Proposition \ref{logGC}).

If $f\in\mathbf{FLogCFG}/\mathbf{Sch}_{\underline{S}}$, then $V\rightarrow U$ is a morphism in $\mathbf{Flog}_S$. This implies that $f$ has property $\mathbf{P}^{strict}$ as long as $\Phi_{log} f$ has property $\mathbf{P}^{strict}$.
\end{proof}

\begin{Proposition}\label{Phialgsp}
Given $\X\in\mathbf{LogCFG}/\underline{S}$, if $\X$ is a log algebraic space, then $\Phi_{log}\X$ is an algebraic log space. If $\X\in\mathbf{FLogCFG}/\underline{S}$, then $\X$ is a log algebraic space if and only if $\Phi_{log}\X$ is an algebraic log space. In the latter case, for property $\mathbf{P}$ of fine log schemes local nature for the strict log \'etale topology, $\X$ has property $\mathbf{P}$ if and only if $\Phi_{log}\X$ has property $\mathbf{P}$.
\end{Proposition}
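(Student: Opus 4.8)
The plan is to read off everything from the behaviour of $\Phi_{log}$ on charts and diagonals, using Lemma \ref{Phi_presevescheme}, Lemma \ref{Phi_rep_morph}, Lemma \ref{AB_Phi_preserveproduct} and Proposition \ref{abGC}. For the first assertion, write $\X=(\underline{\X},\M)$ with $\underline{\X}$ an algebraic space. I would first check that $\Phi_{log}\X$ is a sheaf on $\mathbf{Flog}_{S,fppf}$: since $\underline{\X}$ is in particular a stack over $\mathbf{Sch}_{\underline{S},fppf}$, Proposition \ref{abGC} gives that $\Phi_{log}\X$ is a stack for the strict fppf topology, and triviality of automorphisms is preserved under $\Phi_{log}$, so it is a sheaf. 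For the chart, choose an \'etale cover $U_0\to\underline{\X}$ by a scheme, equip $U_0$ with the pulled-back log structure to obtain a (coherent) log scheme $U$, so that $U\to\X$ is representable, strict, surjective and \'etale; then Lemma \ref{Phi_rep_morph} shows $\Phi_{log}(U\to\X)$ is representable, strict, surjective and log \'etale, while Lemma \ref{Phi_presevescheme} identifies $\Phi_{log}U=h_{U^{int}}$ as a fine log scheme, so this is a chart in the sense of Definition \ref{logalg}. Finally, the diagonal $\Delta_{\X}$ is representable because $\underline{\Delta_{\X}}$ is the representable diagonal of the algebraic space $\underline{\X}$; as $\Phi_{log}$ preserves fiber products (Lemma \ref{AB_Phi_preserveproduct}) we have $\Delta_{\Phi_{log}\X}=\Phi_{log}\Delta_{\X}$, which is again representable by Lemma \ref{Phi_rep_morph}. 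Hence $\Phi_{log}\X$ is an algebraic log space.

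For the converse, let $\X\in\mathbf{FLogCFG}/\underline{S}$ with $\Phi_{log}\X$ an algebraic log space. The diagonal $\Delta_{\X}\colon\X\to\X\times_{\underline{S}}\X$ has \emph{fine} source, which is exactly what the corresponding half of Lemma \ref{Phi_rep_morph} requires: since $\Phi_{log}\Delta_{\X}=\Delta_{\Phi_{log}\X}$ is representable, $\Delta_{\X}$ is representable, i.e. $\underline{\X}$ has diagonal representable by schemes. For the cover, take a chart $i\colon U\to\Phi_{log}\X$ with $U$ a fine log scheme. Because $\Phi_{log}|_{\mathbf{FLogCFG}}$ is strictly fully faithful (Proposition \ref{AB_Phi}, Corollary \ref{logGC}) and $U=\Phi_{log}U$, the morphism $i$ lifts uniquely to $\tilde{i}\colon U\to\X$ in $\mathbf{FLogCFG}$. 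As $\tilde{i}$ is a morphism between fine objects, Lemma \ref{Phi_rep_morph} transfers both representability and the property $\mathbf{P}^{strict}$ back to $\tilde{i}$; using that a strict morphism is log \'etale precisely when its underlying morphism is \'etale, $\tilde{i}$ is representable, strict, surjective and \'etale, so $\underline{U}\to\underline{\X}$ is a surjective \'etale cover by a scheme. Together with the representable diagonal and the converse direction of Proposition \ref{abGC} (which returns the sheaf property of $\underline{\X}$), this exhibits $\X$ as a fine log algebraic space.

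For the statement on properties, recall that a property $\mathbf{P}$ of local nature for the strict log \'etale topology is tested on one (hence any) \'etale chart. A strict \'etale chart $\pi\colon U\to\X$ in $\mathbf{FLAlg}$ corresponds under $\Phi_{log}$ to the chart $\Phi_{log}\pi\colon\Phi_{log}U\to\Phi_{log}\X$, and $\Phi_{log}U\simeq U$ is literally the same fine log scheme. Since $\mathbf{P}$ for $\X$ (resp. for $\Phi_{log}\X$) means precisely that the fine log scheme $U$ has $\mathbf{P}$, and this is an intrinsic property of $U$, we conclude that $\X$ has $\mathbf{P}$ if and only if $\Phi_{log}\X$ does.

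I expect the principal obstacle to be the bookkeeping around fiber products. Because the fine and coherent fiber products disagree (as emphasised in the setting and in Lemma \ref{integralpart}), $\X\times_{\underline{S}}\X$ need not be fine, so one cannot apply the ``both source and target fine'' half of Lemma \ref{Phi_rep_morph} to the diagonal. The observation that removes this difficulty is that representability is transferred under the weaker hypothesis that only the \emph{source} is fine, which is exactly the situation for $\Delta_{\X}$. A secondary point that must be handled explicitly is the identification of \emph{strict log \'etale} with \emph{strict \'etale} (a consequence of Theorem \ref{Olsson_Log_X} for strict morphisms), which is what lets us present the chart morphism as carrying a property of the form $\mathbf{P}^{strict}$ and thereby invoke Lemma \ref{Phi_rep_morph}.
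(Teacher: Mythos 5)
Your proposal is correct and follows essentially the same route as the paper: the sheaf condition via Proposition \ref{abGC}, the diagonal and the strict (log) \'etale chart via Lemma \ref{Phi_rep_morph} together with the strict full faithfulness of $\Phi_{log}$ on $\mathbf{FLogCFG}$ (Proposition \ref{AB_Phi}, Corollary \ref{logGC}), and the property $\mathbf{P}$ statement via the correspondence of charts. Your explicit remarks on why only the source of the diagonal needs to be fine and on identifying strict log \'etale with strict \'etale are sound clarifications of points the paper leaves implicit.
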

\begin{proof}
By Proposition \ref{abGC}, $\Phi_{log}\X$  is equivalent to a sheaf as long as $\X$ is (for $\X\in\mathbf{FLogCFG}/\underline{S}$, $\X$ is equivalent to a sheaf if and only if $\Phi_{log}\X$ is equivalent to a sheaf). It is sufficient to show:

\textbf{Representable of Diagonal:}

By Lemma \ref{Phi_rep_morph}, the representability of $\Delta_{\X}$ implies the representability of $\Delta_{\Phi_{log}\X}$. If $\X\in\mathbf{FLogCFG}/\underline{S}$, $\Delta_{\X}$ is representable if and only if $\Delta_{\Phi_{log}\X}$ is representable.

\textbf{Existence of Covering:}

Suppose that we have a representable strict log \'etale surjective morphism $U\rightarrow\X$, where $U$ is a log scheme. Then $\Phi_{log} U=h_{U^{int}}\rightarrow\Phi_{log}\X$ is a strict log \'etale morphism (Lemma \ref{Phi_rep_morph}).

On the other hand, assume that $\X\in\mathbf{FLogCFG}/\underline{S}$ and there is an \'etale chart $\Phi_{log} U\rightarrow\Phi_{log}\X$. By Proposition \ref{logGC}, this morphism descents to $U\rightarrow\X$. And $U\rightarrow\X$ is a strict log \'etale cover by Lemma \ref{Phi_rep_morph}.
\end{proof}

Next we study the correspondence of properties of morphisms.

\begin{Proposition}\label{Phi_premorph_Asp}
Let $\mathbf{P}$ (resp. $\mathbf{Q}$) be a property of morphisms between (resp. fine log) schemes \'etale local on the source-and-target. Then
\begin{enumerate}
  \item If a morphism $f\in Mor(\mathbf{LAlg}/\underline{S})$ has property $\mathbf{P}^{strict}$ (resp. quasi-compact, quasi-separate, representable) then $\Phi_{log} f$ has property $\mathbf{P}^{strict}$ (resp. quasi-compact, quasi-separate, representable).
  \item If $f\in Mor(\mathbf{FLAlg}/\underline{S})$, then $f$ has property $\mathbf{Q}$ (resp. quasi-compact, quasi-separate, representable) if and only if $\Phi_{log} f$ does.
\end{enumerate}
\end{Proposition}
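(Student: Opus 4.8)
The plan is to reduce both assertions to the representable situation of Lemma \ref{Phi_rep_morph} by passing to charts, the two standing inputs being that $\Phi_{log}$ preserves strict log \'etale covers and that it preserves fiber products (Lemma \ref{AB_Phi_preserveproduct}). The first of these is itself a special case of Lemma \ref{Phi_rep_morph}: a chart $U\to\X$ is representable, strict, surjective and log \'etale; strictness and surjectivity are preserved, and ``strict log \'etale'' is $\mathbf{P}^{strict}$ for $\mathbf{P}=$ \'etale, so $\Phi_{log}U=h_{U^{int}}\to\Phi_{log}\X$ is again such a cover. Since $\Phi_{log}$ is a strict $2$-functor, it carries a commutative chart square for $f$ to a commutative chart square for $\Phi_{log}f$.

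For part (1): given $f:\X\to\Y$ in $\mathbf{LAlg}/\underline{S}$, I would pick a chart $Y_0\to\Y$ by a log scheme, choose a chart $X_0\to\X\times_\Y Y_0$, and let $g:X_0\to Y_0$ be the resulting lift, so that $X_0\to\X$ and $Y_0\to\Y$ are strict log \'etale covers fitting into a commutative square over $f$. If $f$ has $\mathbf{P}^{strict}$ then $g$ has $\mathbf{P}^{strict}$ by the chart definition, hence $\Phi_{log}g$ has $\mathbf{P}^{strict}$ by Lemma \ref{Phi_rep_morph}; applying $\Phi_{log}$ to the square and using the first paragraph shows $\Phi_{log}f$ has $\mathbf{P}^{strict}$. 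Representability is handled directly by Lemma \ref{Phi_rep_morph}, with no charts needed. For quasi-compact and quasi-separated I would argue separately: these are defined through base change along fine log schemes together with quasi-compactness, so I would use that $\Phi_{log}$ preserves fiber products and that for a log scheme $X$ the integralization $X^{int}\to X$ is a closed immersion, whence $X$ quasi-compact forces $\Phi_{log}X=h_{X^{int}}$ quasi-compact; quasi-separatedness then follows by applying this to the diagonal.

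For part (2) the source objects are fine, so $\Phi_{log}X_0=h_{X_0}$ for a chart $X_0$ and, reading Definition \ref{DEF_morphisms} through the identity chart, property $\mathbf{Q}$ of $\Phi_{log}g=h_g$ coincides tautologically with property $\mathbf{Q}$ of $g$. The forward implication is then exactly the argument of part (1). For the converse, suppose $\Phi_{log}f$ has $\mathbf{Q}$; I would build a chart square $(X_0\to\X,\,Y_0\to\Y,\,g:X_0\to Y_0)$ over $f$ by fine log schemes (such charts exist since $\X,\Y\in\mathbf{FLAlg}$) and push it forward by $\Phi_{log}$ to a chart square for $\Phi_{log}f$. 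Since $\Phi_{log}f$ has $\mathbf{Q}$ for \emph{one and hence every} such square, $\Phi_{log}g=h_g$ has $\mathbf{Q}$, hence $g$ has $\mathbf{Q}$, hence $f$ has $\mathbf{Q}$. The restriction to $\mathbf{FLAlg}$ is essential precisely here: without fineness $\Phi_{log}$ replaces $g$ by $h_{g^{int}}$, and the integralization is only controlled in the $\mathbf{P}^{strict}$ direction, which is why part (1) asserts only one implication; the strict full faithfulness of $\Phi_{log}|_{\mathbf{FLogCFG}}$ (Proposition \ref{AB_Phi}, Corollary \ref{logGC}) underlies the matching of charts on the two sides.

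The step I expect to be the genuine obstacle is not conceptual but organizational: verifying that the ``for one and hence every chart'' clause of Definition \ref{DEF_morphisms} transports correctly across $\Phi_{log}$ on both sides, and, separately, treating quasi-compactness and quasi-separatedness, which do not fall under the \'etale-local $\mathbf{P}^{strict}$ machinery and must instead be extracted from preservation of fiber products together with the geometry of the closed immersion $X^{int}\to X$.
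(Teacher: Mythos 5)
Your proposal is correct and follows essentially the same route as the paper: transport a chart square for $f$ to a chart square for $\Phi_{log}f$ via Lemma \ref{Phi_rep_morph}, conclude the $\mathbf{P}^{strict}$ (resp.\ $\mathbf{Q}$) statements from the ``one and hence every chart'' clause, and treat quasi-compactness and quasi-separatedness separately using preservation of fiber products together with the fact that $X^{int}\to X$ is a (quasi-compact) closed immersion. The only place the paper is slightly more explicit than you are is the converse quasi-separatedness step in part (2), where one factors $\Delta_{\X/\Y}$ as $\X\to(\X\times_\Y\X)^{int}\to\X\times_\Y\X$ and uses quasi-compactness of the second arrow, but this is exactly the ``geometry of the closed immersion'' you flag.
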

\begin{proof}
(1) Suppose that we have diagrams
$$\xymatrix{
V \ar[r]^{f'} \ar[d]^v & U \ar[d]^{u}       & \Phi_{log} V^{int} \ar[r]^{\Phi_{log} f'} \ar[d] & \Phi_{log} U^{int} \ar[d]\\
\X \ar[r]^f & \Y                   & \Phi_{log} \X \ar[r]^{\Phi_{log}(f)} & \Phi_{log} \Y
}$$
By Lemma \ref{Phi_rep_morph}, if the left diagram is a chart of $f$ (where $u$, $v$ are strict \'etale covering by log schemes), then the right one is a chart of $\Phi_{log} f$. If $f$ has property $\mathbf{P}^{strict}$, then $f'\in \mathbf{P}^{strict}$. Since $\mathbf{P}$ stable under base change, $\Phi f'\in \mathbf{P}^{strict}$. Hence $\Phi_{log}$ preserves $\mathbf{P}^{strict}$.

Consider the case when $f$ is quasi-compact. Assume that the diagrams are cartesian and $U$ is a quasi-compact fine log scheme. then $V$ is quasi-compact. This implies that $V^{int}$ is quasi-compact and $\Phi f$ is quasi-compact.

For quasi-separateness, one notice that $\Phi_{log}\Delta_{\X/\Y}=\Delta_{\Phi_{log}\X/\Phi_{log}\Y}$.

For representability, it's Lemma \ref{Phi_rep_morph}.

(2) If $f\in Mor(\mathbf{FLAlg}/\underline{S})$, we choose an strict \'etale chart of $f$, then the righthand diagram is a chart of $\Phi_{log} f$, $U$, $V$ are fine log schemes. Hence the result holds.

Consider the case when $\Phi_{log} f$ is quasi-compact. Assume that the diagrams are cartesian, where $U$ is a quasi-compact fine log scheme and $u$ is strict, then $V$ is fine and $\Phi_{log} V$ is quasi-compact. Hence $f$ is quasi-compact.

For quasi-separateness, one notice that $\Phi_{log}\Delta_{\X/\Y}=\Delta_{\Phi_{log}\X/\Phi_{log}\Y}$. If $\Delta_{\Phi_{log}\X/\Phi_{log}\Y}$ is quasi-compact, then $\X\rightarrow (\X\times_{\Y}\X)^{int}$ is quasi-compact. Since  $(\X\times_{\Y}\X)^{int}\rightarrow\X\times_{\Y}\X$ is quasi-compact, $\Delta_{\X/\Y}$ is quasi-compact.

For representability, it's Lemma \ref{Phi_rep_morph}.
\end{proof}

\begin{Proposition}\label{Phi_preproperty_Asp}
Let $\mathbf{P}$ (resp. $\mathbf{Q}$) be a property of morphisms between (resp. fine log) schemes of a local nature for the (resp. strict log) \'etale topology.
\begin{enumerate}
  \item If $X \in \mathbf{LAlg}_{\underline{S}}$ has property $\mathbf{P}$, so is $\Phi_{log} X$.
  \item If $X\in\mathbf{FLAlg}/{\underline{S}}$, then $X$ has property $\mathbf{Q}$ if and only if $\Phi_{log} X$ property $\mathbf{Q}$.
\end{enumerate}
\end{Proposition}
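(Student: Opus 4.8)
The plan is to reduce both statements to a computation on étale charts, exactly as in the proof of Proposition \ref{Phi_premorph_Asp}, and then to feed the result into the object-level definitions of $\mathbf{P}$ and $\mathbf{Q}$ together with the chart correspondence established in Proposition \ref{Phialgsp}. The point to keep in mind is that for a log algebraic space $X\in\mathbf{LAlg}_{\underline S}$ a property is read off the underlying algebraic space $\underline X$, whereas for the algebraic log space $\Phi_{log} X$ a property is read off any strict log \'etale chart by a fine log scheme; so in each case the content is a comparison of the relevant underlying schemes. By Proposition \ref{Phialgsp} the functor $\Phi_{log}$ sends an \'etale chart $U\to X$ (with $U$ a log scheme) to the strict log \'etale chart $\Phi_{log} U=h_{U^{int}}\to\Phi_{log} X$, whose underlying scheme is $\underline{U^{int}}$.

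I would dispose of (2) first, as it is the clean case. If $X\in\mathbf{FLAlg}/\underline S$ is fine, choose a strict log \'etale chart $U\to X$ with $U$ a fine log scheme. Then $U$ is already integral, so $U^{int}=U$ and the chart of $\Phi_{log} X$ furnished by Proposition \ref{Phialgsp} is $h_U$, with source $U$ itself. Since $\mathbf{Q}$ is of a local nature for the strict log \'etale topology, $X$ has $\mathbf{Q}$ if and only if $U$ has $\mathbf{Q}$ if and only if $\Phi_{log} X$ has $\mathbf{Q}$; because the chart is literally unchanged, both implications are immediate, and the asserted equivalence follows.

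For (1) the same chart bookkeeping applies, but now $X$ need not be fine. Choose an \'etale chart $U\to X$ by a log scheme; then $\underline U\to\underline X$ is \'etale surjective, so $\underline X\in\mathbf{P}$ forces $\underline U\in\mathbf{P}$ by local nature. The chart of $\Phi_{log} X$ is $h_{U^{int}}$, with underlying scheme $\underline{U^{int}}$, so statement (1) reduces to the single implication $\underline U\in\mathbf{P}\Rightarrow\underline{U^{int}}\in\mathbf{P}$. By the Remark after Lemma \ref{integralpart} the integral part $U^{int}\to U$ is a closed immersion, realized locally as the base change $\underline U\times_{Spec\,\mathbb Z[P]}Spec\,\mathbb Z[P^{int}]$ along the chart $\underline U\to Spec\,\mathbb Z[P]$ of the monoid $P$.

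The hard part will be exactly this last implication: one must control $\mathbf{P}$ under passage to the integral part. Since $U^{int}\hookrightarrow U$ is only a closed immersion in general, a property of local nature for the \'etale topology need not descend to an arbitrary closed subscheme, so one cannot argue formally as in the morphism case of Proposition \ref{Phi_premorph_Asp}, where stability under base change of $V^{int}=V\times_U U^{int}$ did the work. The only available leverage is the special shape $\underline U\times_{Spec\,\mathbb Z[P]}Spec\,\mathbb Z[P^{int}]$ of the immersion, which encodes that integralization merely imposes the monoid relations $e^{p}=e^{q}$ for $p,q$ with a common image in $P^{gp}$. For properties that pass to every closed subscheme (locally Noetherian, characteristic $p$) the implication is automatic; for the remaining properties in the list one would have to analyze this explicit immersion case by case, and I regard verifying, or else correctly delimiting, the integral-part step as the genuine obstacle of the proposition. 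Once it is settled, part (1) follows at once from the chart reduction above.
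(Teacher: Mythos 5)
Your treatment of part (2) coincides with the paper's: the paper's entire proof is the observation that $U\rightarrow X$ is a chart if and only if $\Phi_{log}U\rightarrow\Phi_{log}X$ is one (Lemma \ref{Phi_rep_morph}), and in the fine case $\Phi_{log}U=h_{U}$, so the equivalence for $\mathbf{Q}$ is immediate, exactly as you argue.

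For part (1) you have isolated a step that the paper does not address at all: its proof is the same one-line chart observation applied with $U$ a possibly non-fine log scheme, and it silently passes from $\underline{U}\in\mathbf{P}$ to $\underline{U^{int}}\in\mathbf{P}$. Your suspicion that this is a genuine obstacle rather than a routine verification is correct, and in fact the implication fails for most of the properties the paper advertises. Take $\underline{U}=\mathrm{Spec}\,\mathbb{Z}[x,y]$ with the coherent log structure charted by the non-integral monoid $P=\langle a,b,c\mid a+c=b+c\rangle$ via $a\mapsto x^{2}$, $b\mapsto y^{3}$, $c\mapsto 0$ (the relation is respected because both sides go to $0$). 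Here $P^{int}\cong\mathbb{N}^{2}$, and the construction of Lemma \ref{integralpart} gives $\underline{U^{int}}=\mathrm{Spec}\bigl(\mathbb{Z}[x,y]\otimes_{\mathbb{Z}[P]}\mathbb{Z}[P^{int}]\bigr)=\mathrm{Spec}\,\mathbb{Z}[x,y]/(x^{2}-y^{3})$: the ambient scheme is regular, but the integral part is neither regular nor normal (and choosing $b\mapsto 0$ instead produces a non-reduced integral part). So part (1) as stated is false for ``regular'', ``normal'', ``reduced'', $S_{n}$, etc., and is correct only for properties inherited by arbitrary closed subschemes (locally Noetherian, characteristic $p$, and the like) --- precisely the delimitation you propose. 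In short, your proposal is not missing an idea that the paper supplies; it has exposed a gap in the paper's own argument, and the case-by-case analysis you defer is unavoidable and, for several of the listed properties, has a negative answer.
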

\begin{proof}
This result follows from the fact that if $U\rightarrow\X$ (where $U$ is log scheme) is a chart, then $\Phi_{log} U\rightarrow\Phi_{log}\X$ is also a chart. If $\X\in\mathbf{FLAlg}/\underline{S}$, then $U\rightarrow\X$ is a chart if and only if $\Phi_{log} U\rightarrow\Phi_{log}\X$ is a chart (Lemma \ref{Phi_rep_morph}).
\end{proof}

In the end of this section we prove that an algebraic log space always has enough compatible minimal objects.

\begin{Lemma}\label{factorbystrict_Asp}
$\X$ is a stack over $\mathbf{Flog}_S$, with a representable, strict, surjective, flat and locally of finite
  presentation morphism $U\rightarrow\X$ where $U$ is a fine log scheme. $f:T\rightarrow\X$ is a morphism from a fine log scheme $T$ to $\X$ over $\mathbf{Flog}_S$. Then $f$ factors through $\xymatrix{T\ar[r]^g & T_0 \ar[r]^h & \X}$ where $T_0$ is a fine log scheme, $\underline{g}=id$, $h$ is strict (i.e. $T_0\times_{\X}U\rightarrow U$ is strict). The factorization is unique in the following sense: if there is another factorization $\xymatrix{f':T\ar[r]^{g'} & T'_0 \ar[r]^{h'} & \X}$ with a 2-isomorphism $\alpha:f\simeq f'$ where $\underline{g'}=id$, $h'$ is strict, then there is a unique pair $(u, \eta)$ where $u$ is a 1-automorphism $u:T_0\rightarrow T'_0$ with $g'=ug$, and $\beta$ is a 2-isomorphism $\beta:h\simeq h'u$ s.t. $g^\ast\beta=\alpha$. We call such factorization a strict factorization.
\end{Lemma}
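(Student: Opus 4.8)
The plan is to realize $T_0$ as the underlying scheme $\underline{T}$ equipped with the log structure obtained by descending, along the strict cover coming from $U\to\X$, the pullback of $M_U$. This is exactly the minimal (strict) log structure through which $f$ is forced to factor, so the lemma is the concrete incarnation of the claim that an algebraic log space has enough compatible minimal objects (Definition \ref{minimalob}, Lemma \ref{GC}).

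First I would form the fiber product $V=T\times_\X U$ in fine log schemes; it exists and is a fine log algebraic space since $\Delta_\X$ is representable. Write $\pi:V\to T$ and $\rho:V\to U$ for the projections. Because $U\to\X$ is strict, surjective, flat and locally of finite presentation and these properties are stable under base change, $\pi$ is a strict fppf cover, so $M_V=\underline{\pi}^\ast M_T$. Set $N:=\underline{\rho}^\ast M_U$, a second fine log structure on $\underline{V}$, and put $V_0=(\underline{V},N)$, so that $\rho$ becomes strict on $V_0$ and there is a canonical morphism $V_0\to U\to\X$.

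Next I would descend $N$ to $\underline{T}$ along $\underline{\pi}$. On $\underline{V}\times_{\underline{T}}\underline{V}$ the two pullbacks of $N$ are computed through the two projections $s,t\colon W:=U\times_\X U\to U$, which are strict because $U\to\X$ is; hence $\underline{s}^\ast M_U=M_W=\underline{t}^\ast M_U$ furnishes a canonical descent datum. Since $\mathbf{Flog}_S$ is an fppf stack (Proposition \ref{Setting_of_log}), $N$ descends to a fine log structure $M_{T_0}$, and I set $T_0=(\underline{T},M_{T_0})$. The canonical map $N\to M_V=\underline{\pi}^\ast M_T$ is compatible with these descent data and so descends to $M_{T_0}\to M_T$, giving $g:T\to T_0$ with $\underline{g}=\mathrm{id}$. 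To produce $h$, I would descend the $1$-morphism $V_0\to\X$ along the strict cover $V_0\to T_0$: the required $2$-isomorphism on $V_0\times_{T_0}V_0$ is the canonical one exhibited by $W=U\times_\X U$ over $\X$, and its cocycle identity on the triple fiber product is the coherence of the fiber-product $2$-isomorphisms. As $\X$ is a stack this descends to $h:T_0\to\X$ with $hg\simeq f$, which I would verify after restriction to the cover $V$. Finally $h$ is strict because the identification $T_0\times_\X U\cong V_0$ makes $T_0\times_\X U\to U$ equal to the strict projection $\rho$. For uniqueness, I would note that strictness of $h'$ pins down $T'_0$: the cover $V':=T'_0\times_\X U$ has $\underline{V'}=\underline{V}$, and by strictness of $h'$ its log structure is $\underline{\rho}^\ast M_U=N$, so $V'\cong V_0$. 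Thus $T'_0$ is again a descent of $(\underline{V},N)$ along $\underline{\pi}$; the $2$-isomorphism $\alpha$ matches the two descent data, and effectivity together with uniqueness of descent yield a unique $u:T_0\to T'_0$ with $g'=ug$ and a unique $\beta:h\simeq h'u$ satisfying $g^\ast\beta=\alpha$.

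I expect the main obstacle to be the descent of the $1$-morphism $V_0\to\X$ together with the bookkeeping of $2$-isomorphisms: one must check that the canonical $2$-isomorphism coming from $W=U\times_\X U$ satisfies the cocycle identity on $V_0\times_{T_0}V_0\times_{T_0}V_0$, and then, in the uniqueness part, that the induced pair $(u,\beta)$ is genuinely unique and compatible with $\alpha$ under $g^\ast$. By contrast, the log-structure descent is routine once $\mathbf{Flog}_S$ is known to be a stack, and representability of $\Delta_\X$ guarantees that all fiber products in play are fine log algebraic spaces.
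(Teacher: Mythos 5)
Your proposal is correct and follows essentially the same route as the paper's proof: base change along $U\to\X$, replace the log structure on $T\times_\X U$ by the pullback of $M_U$ (the paper packages this as the ``strict factorization'' of $U_T\to U$ and of $U_T\times_T U_T\to U\times_\X U$, which are exactly your $V_0$ and $V_0\times_{T_0}V_0$), descend the log structure to $\underline{T}$ by Theorem \ref{Olsson_descent_fine_structure}, descend the morphism to $h:T_0\to\X$ using that $\X$ is a stack, and verify strictness via $T_0\times_\X U\simeq V_0$, with uniqueness obtained by the same descent. The only cosmetic discrepancy is that you attribute the existence of $V=T\times_\X U$ as a fine log scheme to representability of $\Delta_\X$ rather than to the stated representability of $U\to\X$, which does not affect the argument.
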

\begin{proof}\
\begin{description}
  \item[Existence] Consider the solid diagram
  $$\xymatrix{
  U_T\times_TU_T \ar[r]^u \ar@<1ex>[d] \ar@<-1ex>[d] & R \ar[r] \ar@<1ex>[d] \ar@<-1ex>[d] & U\times_\X U \ar@<1ex>[d] \ar@<-1ex>[d] \\
  U_T \ar[r]^v \ar[d] & V \ar[r] \ar@{-->}[d] & U \ar[d] \\
  T \ar@{-->}[r] \ar@/_/[rr] & T_0 \ar@{-->}[r]^h & \X
  }$$
  where the left vertical arrows come from base change of the right vertical arrows. The first and second horizontal arrows are the strict factorization of log schemes. Since $\underline{u}=id$, $\underline{v}=id$, we obtain that $\xymatrix{\underline{R} \ar@<1ex>[r] \ar@<-1ex>[r] &\underline{V}}$ is effective with quotient $id:\underline{T}\rightarrow \underline{T_0}$. Moreover, since $\underline{V}\rightarrow \underline{T_0}$ is flat, locally of finite presentation morphism and $\xymatrix{R \ar@<1ex>[r] \ar@<-1ex>[r] &V}$ are strict, we can descent the log structure on $V$ to $\underline{T_0}$ (Theorem \ref{Olsson_descent_fine_structure}). Denote this decent fine log scheme $T_0$, then $V\rightarrow T_0$ is strict, flat, locally of finite presentation. Since $\X$ is a stack, the descent data of morphism $\xymatrix{R \ar@<1ex>[r] \ar@<-1ex>[r] &V \ar[r] & \X}$ gives $h:T_0\rightarrow \X$ fitting in the diagram.

  To show $V\simeq T_0\times_{\X}U$ in the diagram, which implies that $h$ is strict, we consider the diagram:
  $$\xymatrix{
   & V \ar[d]^{i} \ar[rd] & \\
  U_T \ar[r]^{f} \ar[ru]^g & T_0\times_{\X}U \ar[r] & U
  }$$
  where $\underline{f}=\underline{g}=id$. Since $V\rightarrow U$ is strict, $i$ is an isomorphism, hence $V\simeq T_0\times_{\X}U$.

  This gives a strict factorization of $T\rightarrow\X$.

  \item[Uniqueness] Using the same diagram of another strict factorization:
  $$\xymatrix{
  U_T\times_TU_T \ar[r]^u \ar@<1ex>[d] \ar@<-1ex>[d] & R' \ar[r] \ar@<1ex>[d] \ar@<-1ex>[d] & U\times_\X U \ar@<1ex>[d] \ar@<-1ex>[d] \\
  U_T \ar[r]^v \ar[d] & V' \ar[r] \ar[d] & U \ar[d] \\
  T \ar[r]^{g'} \ar@/_/[rr]_{f'} & T'_0 \ar[r]^{h'} & \X
  }$$
  with a 2-isomorphism $\alpha:f\simeq f'$, $R'$ and $V'$ come from the pullback through $T'_0\rightarrow\X$. Then $R'$ and $V'$ give a strict factorizations. By the uniqueness of the strict factorization of log schemes, there are unique isomorphisms $r:R\simeq R'$, $v:V\simeq V'$ compatible to the diagrams. We can descent them to an isomorphism $u:T_0\simeq T'_0$ compatible to the diagrams. Doing a same descent procedure, we get a 2-isomorphism $\beta:h\simeq h'u$, s.t. $g'^\ast\beta=\alpha$. The uniqueness of $u$ and $\beta$ comes from chasing the diagram.
\end{description}
\end{proof}

\begin{Theorem}\label{alglogstackminimalweak}
Suppose that $\X$ is a stack over $\mathbf{Flog}_S$, with a representable, strict, surjective, flat and locally of finite
  presentation morphism $U\rightarrow\X$ where $U$ a fine log scheme. Then $\X$ has enough compatible minimal objects.
\end{Theorem}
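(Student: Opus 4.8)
The plan is to establish directly the two conditions of Lemma \ref{GC} for the structure functor $F\colon\X\to\mathbf{Flog}_S$, namely \textbf{B1} (enough minimal objects) and \textbf{B2} (compatibility), using the strict factorization of Lemma \ref{factorbystrict_Asp} as the main engine. I identify an object $x\in\X$ lying over $T=Fx$ with its classifying morphism $f_x\colon T\to\X$, and I use that a morphism of $\mathbf{Flog}_S$ is cartesian for the fibration $\mathbf{Flog}_S\to\mathbf{Sch}_{\underline S}$ exactly when it is strict; so in \textbf{B2} the phrase ``$Fi$ cartesian'' is read as ``$Fi$ strict.'' Let me call an object $z$ over $T_0$ \emph{strict} if its classifying morphism $h\colon T_0\to\X$ is strict, i.e. $T_0\times_{\X}U\to U$ is strict. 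The technical heart of the proof will be the equivalence: $z$ is minimal (Definition \ref{minimalob}) if and only if $z$ is strict.

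First I would prove ``strict implies minimal'', which is where the uniqueness clause of Lemma \ref{factorbystrict_Asp} enters. Given a solid diagram $w'\xrightarrow{i}w$, $w'\xrightarrow{j}z$ with $\underline{Fi}=\underline{Fj}=\mathrm{id}$ and $z$ strict over $T_0$, I would apply strict factorization to $f_w$ to obtain $T_w\xrightarrow{\bar g}\bar T\xrightarrow{\bar h}\X$ with $\underline{\bar g}=\mathrm{id}$ and $\bar h$ strict. Composing with $Fi$ presents $f_{w'}$ with two strict factorizations, namely through $(\bar g\circ Fi,\bar h)$ and through $(Fj,h)$; the uniqueness clause yields a unique isomorphism $u\colon T_0\simeq\bar T$ with $\bar g\circ Fi=u\circ Fj$ and $\bar h u\simeq h$. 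Setting $Fk:=u^{-1}\bar g$ gives a morphism $k\colon w\to z$ over $\mathrm{id}$ with $ki=j$, and a second completion would furnish a second strict factorization of $f_w$ through $(\,\cdot\,,h)$, so uniqueness forces it to coincide with $k$; hence $z$ is minimal. For the converse I would take a minimal $z$, form its strict factorization $z\to z_0$ over $\mathrm{id}$ (with $z_0$ strict, hence minimal by the previous step), and use minimality of both $z$ and $z_0$ to produce mutually inverse maps, concluding that $z\to z_0$ is an isomorphism and therefore that $z$ is strict.

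With the equivalence in hand, \textbf{B1} is immediate: the existence clause of Lemma \ref{factorbystrict_Asp} factors $f_x$ as $T\xrightarrow{g}T_0\xrightarrow{h}\X$ with $\underline g=\mathrm{id}$ and $h$ strict, so the object $z$ classified by $h$ is strict (hence minimal) and the cartesian lift of $g$ is the required $i\colon x\to z$ with $\underline{Fi}=\mathrm{id}$. For \textbf{B2}, fix $z$ minimal and $i\colon w\to z$, and write $h_z$ for the (strict, by the equivalence) classifying map of $z$. If $Fi$ is strict then $w$ is classified by $h_z\circ Fi$, which is strict because composites of strict maps are strict (check on the chart: $T_w\times_\X U=T_w\times_{T_0}(T_0\times_\X U)$ and each factor is strict), so $w$ is strict, i.e. minimal. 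Conversely, if $w$ is minimal, then both $h_z\colon T_0\to\X$ and $h_z\circ Fi\colon T_w\to\X$ are strict; pulling back the chart gives a strict morphism $T_w\times_\X U\to T_0\times_\X U$ over $U$ between objects strict over $U$, and since $T_0\times_\X U\to T_0$ is a strict fppf cover, strictness of $Fi$ descends by Theorem \ref{Olsson_descent_fine_structure} (log structures form an fppf stack, Proposition \ref{Setting_of_log}).

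The main obstacle is the step ``strict implies minimal'': it is the only place where one must extract existence \emph{and} uniqueness of the completion $k$, and this hinges on matching the two strict factorizations of $f_{w'}$ and carefully tracking the $2$-isomorphisms (the compatibility $g'^\ast\beta=\alpha$ in Lemma \ref{factorbystrict_Asp}) so that $ki=j$ holds on the nose rather than merely up to an uncontrolled $2$-cell. The descent arguments in \textbf{B2} are comparatively routine once Theorem \ref{Olsson_descent_fine_structure} is invoked, and \textbf{B1} is a formal consequence of the existence of strict factorizations.
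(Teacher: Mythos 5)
Your proof is correct and follows essentially the same route as the paper: both identify the minimal objects with those whose classifying morphism $T_0\to\X$ is strict, derive existence and uniqueness of the completion $k$ from the existence and uniqueness clauses of the strict factorization in Lemma \ref{factorbystrict_Asp}, and verify \textbf{B2} by the observation that strictness of $Fi$ can be checked after base change along the strict fppf chart. The only addition is your explicit verification that every minimal object is strict, a point the paper leaves implicit when it checks compatibility only for objects of $\X_m$.
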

\begin{proof}
Let $\X_m$ be the subcategory of $\X$ consists of objects corresponding to strict morphisms $T\rightarrow X$ (that is to say $T\times_{\X}U\rightarrow U$ is strict) where $T$ is a fine log scheme. We prove that $\X_m$ form a compatible system of minimal objects.
\begin{description}
  \item[Step 1: Minimality] Given a 2-commutative solid diagram
  $$\xymatrix{
  T_1 \ar[r]^u \ar[d]^v \ar[rd]^{\xi_1} & T_2 \ar[d]^{\xi_2} \ar@{-->}[r]^g & T'_2 \ar@{-->}[ld]^h \ar@{-->}[lld]_w\\
  T_0 \ar[r]_{\xi_0} & \X &
  }$$
  with 2-isomorphisms $\alpha_1:\xi_1\simeq \xi_0 v$, $\alpha_2:\xi_2 u\simeq \xi_1$, and $\xi_0$ is strict, $\underline{u}=\underline{v}=id$. Take a strict factorization $\xi_2=hg$. It follows that $\xi_1$ has two strict factorizations $\alpha_1:\xi_1\simeq\xi_0v=h(gu)$. By the uniqueness of the strict factorization we have a unique 1-automorphism $w:T'_2\rightarrow T_0$ s.t. $wgu=v$, and a unique 2-isomorphism $\beta: h\simeq \xi_0 w$ s.t. $(gu)^\ast(\beta)=\alpha_1\alpha_2$. Hence $wg$ is what we need.

  To prove the uniqueness, assume that there are morphisms $\phi_i:T_2\rightarrow T_0$, with 2-isomorphisms $\beta'_i: \xi_2 \simeq \xi_0 \phi_i$ s.t. $u^\ast(\beta'_i)=\alpha_1\alpha_2$ ($i=1, 2$). This gives two strict factorizations of $\xi_2$. By the uniqueness, we have $\phi_1=\phi_2$ and $\beta_2\beta_1^{-1}=id$. This finishes the proof that $\X_m$ are all minimal objects.

  \item[Step 2: $\X_m$ is enough] This is the direct consequence of the existence of the strict factorization by Lemma \ref{factorbystrict_Asp}.
  \item[Step 3: Compatibility] Let $\xi:T\rightarrow X \in\X_m$. Given $f:T_1\rightarrow T$, we have $f^\ast\xi:T_1\rightarrow T\rightarrow\X$. Hence $f^\ast\xi$ is strict if and only if $f$ is strict by chasing the diagram
      $$\xymatrix{
      U_{T_1} \ar[r]^{f_U} \ar[d] & U_{T_2} \ar[r] \ar[d] & U \ar[d] \\
      T_1 \ar[r]^f & T_2 \ar[r] & \X
      }$$.
      Notice that $f$ is strict if and only if $f_U$ is strict.

      This proves the compatibility of $\X_m$.

\end{description}
\end{proof}

As a corollary, all algebraic log spaces have enough compatible minimal objects. Hence we get the representation theorem of algebraic log space:
\begin{Theorem}\label{Phi_corres_Asp}
$\Phi_{log}$ sends $\mathbf{LAlg}_{\underline{S}}$ to $\mathbf{AlgL}_S$, and restricts on $\mathbf{FLAlg}_{\underline{S}}$ to be a strict 2-equivalence:
$$\mathbf{FLAlg}_{\underline{S}}\rightarrow\mathbf{AlgL}_S$$
Moreover, $\Phi_{log} X\simeq\Phi_{log} Y$ if and only if $X^{int}\simeq Y^{int}$.
\end{Theorem}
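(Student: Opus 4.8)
The plan is to split the statement into three assertions and dispatch each by assembling the machinery already built. The inclusion $\Phi_{log}(\mathbf{LAlg}_{\underline{S}})\subseteq\mathbf{AlgL}_S$ is exactly the first half of Proposition \ref{Phialgsp}, so nothing new is needed there. For the restriction to $\mathbf{FLAlg}_{\underline{S}}$, I would verify the two conditions defining a strict $2$-equivalence (Definition \ref{DEF_2-functor}): strict full faithfulness and essential surjectivity. Strict full faithfulness is immediate, since $\mathbf{FLAlg}_{\underline{S}}$ is a full sub-$2$-category of $\mathbf{FLogCFG}/\mathbf{Sch}_{\underline{S}}$ (by the remark identifying log algebraic spaces with a full subcategory of $\mathbf{LogCFG}/\underline{S}$) and $\Phi|_{\mathbf{FLogCat}/\C}$ is already strictly fully faithful (Proposition \ref{AB_Phi}, Corollary \ref{logGC}); one simply restricts.

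The heart of the argument is essential surjectivity, and this is where I would concentrate. Given $X\in\mathbf{AlgL}_S$, I would first observe that $X$ is a stack over $\mathbf{Flog}_S$ admitting a representable, strict, surjective, log \'etale chart $U\to X$ from a fine log scheme (Definition \ref{logalg}); since a strict log \'etale morphism is \'etale on the underlying schemes, it is in particular flat and locally of finite presentation, so $X$ meets the hypotheses of Theorem \ref{alglogstackminimalweak} and therefore has enough compatible minimal objects, i.e.\ conditions \textbf{B1} and \textbf{B2}. By Gillam's description of the essential image (Lemma \ref{GC}) there is then an object $\X\in\mathbf{FLogCFG}/\mathbf{Sch}_{\underline{S}}$ with $\Phi_{log}\X\simeq X$. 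It remains to promote $\X$ from a fine log groupoid fibration to a genuine fine log algebraic space, and this is precisely the converse direction of Proposition \ref{Phialgsp}: since $\X\in\mathbf{FLogCFG}/\underline{S}$ and $\Phi_{log}\X\simeq X$ is an algebraic log space, $\X$ is a log algebraic space, hence $\X\in\mathbf{FLAlg}_{\underline{S}}$. Thus $X$ lies in the essential image of $\Phi_{log}|_{\mathbf{FLAlg}_{\underline{S}}}$, completing the equivalence.

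Finally, for the ``int'' criterion I would use that the functor of points of a log algebraic space over $\mathbf{Flog}_S$ does not distinguish $X$ from its integral part: $\Phi_{log}X=h_X=h_{X^{int}}=\Phi_{log}X^{int}$, where the middle identity is the universal property of $X^{int}$ from Lemma \ref{integralpart} (already exploited in Lemma \ref{Phi_presevescheme}). Hence $\Phi_{log}X\simeq\Phi_{log}Y$ is equivalent to $\Phi_{log}X^{int}\simeq\Phi_{log}Y^{int}$, and since $X^{int},Y^{int}\in\mathbf{FLAlg}_{\underline{S}}$ and $\Phi_{log}$ is strictly fully faithful on this sub-$2$-category (hence both preserves and reflects equivalences), the latter holds if and only if $X^{int}\simeq Y^{int}$. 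I expect the genuine obstacle not to lie in this assembly but to have been discharged already in Theorem \ref{alglogstackminimalweak}: the real work is producing enough compatible minimal objects from an \'etale chart, and once that existence-and-compatibility statement is in hand the remaining steps are formal applications of the abstract Gillam correspondence (Lemma \ref{GC}) together with Proposition \ref{Phialgsp}. The single subtle point meriting care is the check that the abstractly reconstructed $\X$ is an \emph{algebraic log space} rather than merely a fine log stack, which is exactly why it is essential to invoke the ``if and only if'' of Proposition \ref{Phialgsp} and not only its forward direction.
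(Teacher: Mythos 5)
Your proposal is correct and follows essentially the same route as the paper: Proposition \ref{Phialgsp} for the image and strict full faithfulness, Theorem \ref{alglogstackminimalweak} plus the Gillam correspondence (Corollary \ref{logGC}) for essential surjectivity, and $h_X=h_{X^{int}}$ from Lemma \ref{integralpart} for the last claim. The only difference is that you make explicit two steps the paper leaves implicit — that the log \'etale chart of an algebraic log space is flat and locally of finite presentation so that Theorem \ref{alglogstackminimalweak} applies, and that the converse direction of Proposition \ref{Phialgsp} is needed to upgrade the reconstructed fibered category to a fine log algebraic space — both of which are worthwhile clarifications.
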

\begin{proof}
By Proposition \ref{Phialgsp}, $\Phi_{log}$ sends $\mathbf{LAlg}_{\underline{S}}$ to $\mathbf{AlgL}_S$, and restricts to a strict fully faithful functor $\mathbf{FLAlg}_{\underline{S}}\rightarrow\mathbf{AlgL}_S$. By Theorem \ref{alglogstackminimalweak} and Corollary \ref{logGC}, the essential images of $\Phi_{log}$ are $\mathbf{AlgL}_S$. Hence $$\mathbf{FLAlg}_{\underline{S}}\rightarrow\mathbf{AlgL}_S$$ is a strict 2-equivalence.
The last part follows directly from Lemma \ref{integralpart}.
\end{proof}

\begin{Definition}\label{DEF_underlinespace-AlgL}
Given an algebraic log space $X$, there is unique (up to equivalence) a fine log algebraic space $X'$ s.t. $\Phi_{log} X'\simeq X$. The underlining algebraic space of $X$ is defined to be $\underline{X'}$, and the log structure of $X$ is defined to be $\M_{X'}$.
\end{Definition}

As a trivial corollary, every morphism $f:X\rightarrow Y$ between algebraic log spaces has a strict factorization as Lemma \ref{factorbystrict_Asp}.
\begin{Lemma}\label{factorbystrict_Aspace}
$f:Y\rightarrow X$ is a morphism in $\mathbf{AlgL}_S$. Then $f$ factors through $\xymatrix{Y\ar[r]^g & Y_0 \ar[r]^h & \X}$ where $Y_0$ is an algebraic log space, $\underline{g}=id:\underline{Y}\rightarrow\underline{X}$ and $h$ is strict. The factorization is unique in the following sense: if there is another factorization $\xymatrix{f':Y\ar[r]^{g'} & Y'_0 \ar[r]^{h'} & \X}$ where $\underline{g'}=id$ and $h'$ is strict, then there is a unique automorphism $u:T_0\rightarrow T'_0$ with $g'=ug$. We call such factorization a strict factorization.
\end{Lemma}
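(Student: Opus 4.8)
The plan is to deduce this from the equivalence of Theorem~\ref{Phi_corres_Asp}, so that the whole statement becomes a formal consequence of the (elementary) strict factorization of morphisms of \emph{fine log algebraic spaces} via pullback log structures. First I would use Theorem~\ref{Phi_corres_Asp} to lift everything into $\mathbf{FLAlg}_{\underline{S}}$: choose fine log algebraic spaces $Y'$, $X'$ with $\Phi_{log}Y'\simeq Y$ and $\Phi_{log}X'\simeq X$, and, since $\Phi_{log}|_{\mathbf{FLAlg}_{\underline{S}}}$ is strictly fully faithful, let $f':Y'\rightarrow X'$ be the unique morphism with $\Phi_{log}f'\simeq f$. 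Because $\Phi_{log}$ carries underlying spaces, strict morphisms, and isomorphisms on the fine side to the corresponding notions on the $\mathbf{AlgL}_S$ side (Definition~\ref{DEF_underlinespace-AlgL}, Proposition~\ref{Phi_premorph_Asp}) and is fully faithful, it suffices to construct a strict factorization of $f'$ in $\mathbf{FLAlg}_{\underline{S}}$, prove its uniqueness there, and then apply $\Phi_{log}$.

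For existence I would put on $\underline{Y'}=\underline{Y}$ the pullback log structure $\M_{Y_0'}:=\underline{f'}^{\ast}\M_{X'}$, i.e. the log structure associated to $\underline{f'}^{-1}\M_{X'}\rightarrow\mathcal{O}_{\underline{Y'}}$, and set $Y_0':=(\underline{Y'},\M_{Y_0'})$. The pullback of a fine log structure is fine, so $Y_0'\in\mathbf{FLAlg}_{\underline{S}}$. The adjunction defining $f'$ yields a canonical map $g':Y'\rightarrow Y_0'$ with $\underline{g'}=\mathrm{id}$, while the tautological map $h':Y_0'\rightarrow X'$ is strict and satisfies $f'=h'g'$. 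Applying $\Phi_{log}$ and putting $Y_0:=\Phi_{log}Y_0'$, $g:=\Phi_{log}g'$, $h:=\Phi_{log}h'$ gives the desired factorization: $h$ is strict by Proposition~\ref{Phi_premorph_Asp} and $\underline{g}=\mathrm{id}$ by Definition~\ref{DEF_underlinespace-AlgL}.

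Uniqueness is purely formal. Any factorization $Y'\xrightarrow{g''}Y_0''\xrightarrow{h''}X'$ with $\underline{g''}=\mathrm{id}$ and $h''$ strict forces $\underline{Y_0''}=\underline{Y'}$ and, since then $\underline{h''}=\underline{f'}$, also $\M_{Y_0''}=\underline{h''}^{\ast}\M_{X'}=\underline{f'}^{\ast}\M_{X'}$; the universal property of the pullback log structure then produces a unique isomorphism $u':Y_0'\xrightarrow{\sim}Y_0''$ with $g''=u'g'$. Pushing $u'$ through the fully faithful $\Phi_{log}$ gives the required unique isomorphism $u$ (note that algebraic log spaces are sheaves, hence $0$-truncated, so there are no nontrivial $2$-morphisms to track here, unlike in Lemma~\ref{factorbystrict_Asp}).

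I expect no serious obstacle, since the hard input was already absorbed into Theorem~\ref{Phi_corres_Asp} (which itself rests on Lemma~\ref{factorbystrict_Asp}). The only points that genuinely require care are: (i) checking that the pullback log structure of a \emph{fine} log structure is again fine, so that $Y_0'$ really lives in $\mathbf{FLAlg}_{\underline{S}}$; and (ii) verifying that the three notions appearing in the statement --- ``underlying morphism is the identity'', ``strict'', and ``isomorphism of algebraic log spaces'' --- which on $\mathbf{AlgL}_S$ are defined only through the equivalence, match their honest counterparts on $\mathbf{FLAlg}_{\underline{S}}$. Both are guaranteed by Propositions~\ref{Phi_premorph_Asp} and~\ref{Phi_preproperty_Asp} together with Lemma~\ref{integralpart}, so the argument reduces to the one-line pullback-log-structure factorization.
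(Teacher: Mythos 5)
Your proposal is correct and follows essentially the same route as the paper: both reduce via the equivalence of Theorem~\ref{Phi_corres_Asp} to a strict factorization in $\mathbf{FLAlg}_{\underline{S}}$ and then transport it back through $\Phi_{log}$. The only difference is that the paper treats the factorization on the fine side as known and does not spell it out, whereas you make explicit that it is the pullback log structure $\underline{f'}^{\ast}\M_{X'}$ construction; this is a faithful filling-in of the same argument, not a different one.
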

\begin{proof}
By theorem \ref{Phi_corres_Asp}, $f=\Phi_{log}f'$ comes from $f':X'\rightarrow Y'$ in $\mathbf{FLAlg}_{\underline{S}}$. We can take the strict factorization $f':X'\rightarrow Z'\rightarrow Y'$ in $\mathbf{FLAlg}_{\underline{S}}$, then $f:X\rightarrow \Phi_{log} Z'\rightarrow Y$ is the strict factorization that we want.
\end{proof}

We make a remark about descent of morphism from algebraic log spaces to stack over $\mathbf{Flog}_S$. We give $\mathbf{FLAlg}_S$ the big strict fppf topology, where coverings $\{f_i:X_i\rightarrow X\}_{i\in I}$ are strict fppf morphisms s.t. $\coprod_{i\in I} f_i:\coprod_{i\in I}X_i\rightarrow X$ is surjective. We denote this topoi $\mathbf{FLAlg}_{S,fppf}$.

Given a stack $\X$ over $\mathbf{Flog}_S$, we can define the groupoid fibration $\X_{Asp}$ over $\mathbf{FLAlg}_{S,fppf}$. The objects of $\X_{Asp}$ are morphisms $f:U\rightarrow \X$ in $\mathbf{CFG}_{\mathbf{Flog}_S}$ with $U\in \mathbf{FLAlg}$. A morphism $\alpha:f\rightarrow g$ is a natural transformation from $f$ to $g$ over $\mathbf{Flog}_S$ (i.e. a 2-morphism from $f$ to $g$ in the strict 2-category $\mathbf{CFG}_{\mathbf{Flog}_S}$). The following descent result will be needed in Theorem \ref{factorbystrict_Stack}.
\begin{Lemma}\label{descent_Asptostack}
$\X_{Asp}$ is a stack over $\mathbf{FLAlg}_{S,fppf}$.
\end{Lemma}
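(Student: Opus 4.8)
The plan is to verify the two defining properties of a stack: that $\X_{Asp}\to\mathbf{FLAlg}_{S,fppf}$ is fibered in groupoids, and that all descent data are effective. The fibration structure is immediate: given $f:U\to\X$ in $\X_{Asp}$ lying over $U$ and a morphism $\phi:V\to U$ in $\mathbf{FLAlg}_S$, the composite $f\phi:V\to\X$ serves as a cartesian pullback, and since $\X$ is fibered in groupoids every $2$-morphism in a fiber is invertible, so each fiber groupoid $\mathrm{Hom}_{\mathbf{CFG}/\mathbf{Flog}_S}(U,\X)$ is indeed a groupoid. Thus the substance of the lemma is descent, and the whole strategy is to reduce it to the hypothesis that $\X$ is a stack over $\mathbf{Flog}_S$ with the strict fppf topology.

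First I would record the reduction device. Every $U\in\mathbf{FLAlg}_S$ is a fine log structure on an algebraic space, hence admits a strict log \'etale (in particular strict fppf) chart $T\to U$ by a fine log scheme $T$ (Definition \ref{logalg}, Theorem \ref{Phi_corres_Asp}). Consequently a strict fppf cover $\{U_i\to U\}_{i\in I}$ in $\mathbf{FLAlg}_S$ can be refined, by composing with charts $T_i\to U_i$, to a family $\{T_i\to U\}_{i\in I}$ of strict fppf morphisms from fine log schemes with $\coprod_i T_i\to U$ surjective. Viewing $U$ and the $U_i$ as sheaves over $\mathbf{Flog}_{S,fppf}$ (Proposition \ref{Setting_of_log}(3), Definition \ref{logalg}), the map $\coprod_i T_i\to U$ becomes an epimorphism of sheaves whose source is a disjoint union of representables, i.e. a covering in the very site over which $\X$ is assumed to be a stack.

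With this reduction both descent conditions follow from the stack property of $\X$. For gluing of $2$-morphisms: given $f,g\in\X_{Asp}(U)$ and compatible $2$-isomorphisms $\theta_i:f|_{U_i}\Rightarrow g|_{U_i}$ agreeing over the fiber products $U_i\times_U U_j$ (which lie in $\mathbf{FLAlg}_S$), pulling back along the charts $T_i\to U_i$ and the $T_i\times_U T_j$ produces descent data for an isomorphism $f\Rightarrow g$ over the fine-log-scheme cover $\{T_i\to U\}$; since $\X$ is a stack over $\mathbf{Flog}_{S,fppf}$ the relevant Isom-presheaves are sheaves, so these data glue to a unique $\theta:f\Rightarrow g$ restricting to the $\theta_i$. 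For effectivity of object descent: given $f_i\in\X_{Asp}(U_i)$ together with a cocycle $\varphi_{ij}$ on triple overlaps, pull back along the charts to obtain descent data over $\{T_i\to U\}$; because $\X$ is a stack over $\mathbf{Flog}_{S,fppf}$ this effectivizes, and assembling it with the presentation of $U$ by its own chart (and of the $T_i\times_U T_j$) yields a morphism $f:U\to\X$ with $f|_{U_i}\simeq f_i$ compatibly with the $\varphi_{ij}$.

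The main obstacle is precisely the effectivity step and the bookkeeping of this \emph{double} descent: one descends simultaneously along the cover $\{U_i\to U\}$ and along the charts $T_i\to U_i$ (together with the chart of the base $U$), so the cocycle identities must be checked over the various triple overlaps $T_i\times_U T_j\times_U T_k$ and shown to be independent of the chart choices. That independence is exactly what the sheaf condition for morphisms established in the first step guarantees. Conceptually the entire argument says that $\mathbf{Flog}_S$ is a dense generating subcategory of $\mathbf{FLAlg}_S$ for the strict fppf topology, so that by the comparison lemma a stack over $\mathbf{Flog}_{S,fppf}$ extends uniquely to one over $\mathbf{FLAlg}_{S,fppf}$; the hands-on descent above is simply this comparison unwound for the particular fibered category $\X_{Asp}$ of morphisms into $\X$.
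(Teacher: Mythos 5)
The paper states Lemma \ref{descent_Asptostack} without any proof, so there is no argument of the author's to compare yours against; I can only assess your proposal on its own terms. Your strategy is the right one and is essentially the standard dense-sub-site argument: every object of $\mathbf{FLAlg}_S$ admits a strict \'etale chart by a fine log scheme, so every strict fppf cover in $\mathbf{FLAlg}_{S,fppf}$ refines to one whose members are fine log schemes, and both the Isom-sheaf condition and effectivity of descent reduce to the hypothesis that $\X$ is a stack over $\mathbf{Flog}_{S,fppf}$. The reduction of the Isom-sheaf condition is clean as you state it, because it is checked against one test object $T\to U$ with $T$ a fine log scheme at a time, over the covers $\{T\times_U T_i\to T\}$ of the fine log scheme $T$.

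The one step you should spell out more carefully is effectivity. The stack hypothesis on $\X$ only effectivizes descent data over covers of fine log \emph{schemes}, whereas after your refinement the datum still lives over a cover $\{T_i\to U\}$ whose base $U$ is merely an algebraic log space; one cannot ``apply the stack property'' to it directly. What closes the argument is the identification $\mathrm{Hom}(U,\X)\simeq\mathrm{Desc}(\X,T\to U)$ for a single chart $T\to U$ with $R=T\times_U T$: since $U$ is a sheaf on $\mathbf{Flog}_{S,fppf}$ and $T\to U$ is an epimorphism of sheaves, $U$ is the coequalizer of $R\rightrightarrows T$, so a morphism $U\to\X$ is exactly an object of $\X(T)$ equipped with descent data along $R$ (here $T$ and $R$ are fine log schemes because $\Delta_U$ is representable, so the stack property of $\X$ genuinely applies). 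Your phrase about ``assembling it with the presentation of $U$ by its own chart'' gestures at this but does not state it; once it is made explicit, the double-descent bookkeeping you describe is routine and the proof is complete.
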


\section{Log Algebraic Stacks and Algebraic Log Stacks}

\subsection{Basic notions on log versions of algebraic stack}\

In this section we continue to study log version of algebraic stacks, using the same setting as that in the previous section.
Most of the arguments are almost formal copies of corresponding arguments for algebraic log spaces. However, since they rely heavily on the previous section, we would like to show the complete proofs.

\begin{Definition}\label{stack}
A stack over $\mathbf{Sch}_{\underline{S},fppf}$ is algebraic (resp. DM) if its diagonal is representable by algebraic spaces and admits a smooth (resp. \'etale) covering by scheme.
\end{Definition}

\begin{Definition}\label{DEF_logalgstack}
Given an algebraic stack $\X$ with fppf topology. A (resp. fine) log structure of $\X$ is a pair $(\M, \alpha)$ where $\M$ is a coherent (resp. fine) sheaf of monoid and $\alpha:\M\rightarrow \mathcal{O}_{\X}$ is a homomorphism of monoids to multiply monoid of $\mathcal{O}_{\X}$, satisfying that $\alpha|_{\alpha^{-1}\mathcal{O}_{\X}^\ast}:\alpha^{-1}\mathcal{O}_{\X}^\ast\rightarrow \mathcal{O}_{\X}^\ast$ is an isomorphism. Such algebraic stack with logarithmic structure is called a \textbf{log algebraic stack}. $\mathbf{LAS}_{\underline{S}}$ (resp. $\mathbf{FLAS}_{\underline{S}}$) stands for the strict 2-category of (resp. fine) log algebraic stacks.
\end{Definition}
\textbf{Remark:} We can also define the log structure on the lisse-\'etale topology, which turns out to be equivalent to our notion when the log structure is fine (Theorem \ref{Olsson_descent_fine_structure}).

\begin{Definition}\label{DEF_ab_rep_Asp}
Let $\mathbf{P}$ (resp. $\mathbf{Q}$) be a property of morphisms in $\mathbf{LAlg}_S$ (resp. $\mathbf{FAlgL}_S$), stable under base change and smooth locally on the base.
\begin{enumerate}
  \item Let $f:\X\rightarrow\Y$ be a morphism in $\mathbf{CFG}/\mathbf{Flog}_S$. $f$ is \textbf{Asp-representable} if for any morphism $U\rightarrow \Y$ with $U\in\mathbf{Flog}_S$, $\X\times_{\Y}U\in\mathbf{AlgL}_S$. $f$ has property $\mathbf{Q}$ if for every $U\rightarrow \Y$ with $U\in\mathbf{Flog}_S$, $\X\times_{\Y}U\rightarrow U$ has property $\mathbf{Q}$.
  \item A morphism in $\mathbf{LogCFG}/\mathbf{Sch}_{\underline{S}}$ is Asp-representable if the underlining morphism in $\mathbf{CFG}/\mathbf{Sch}_{\underline{S}}$ is representable by algebraic spaces. $f$ has property $\mathbf{P}$ if for every $U\rightarrow \Y$ with $U\in\mathbf{Log}_S$, $\X\times_{\Y}U\rightarrow U$ has property $\mathbf{P}$.
\end{enumerate}
\end{Definition}
\textbf{Remark:} To check that an Asp-representable morphism in $\mathbf{LogCFG}/\mathbf{Sch}_{\underline{S}}$ has property $\mathbf{P}$, it's sufficient to check property under base change on strict morphism $U\rightarrow \Y$.

It is known that if $\X\rightarrow\Y$ is an Asp-representable morphism from a stack to an algebraic space, then $\X$ is an algebraic space. The log version is also true:

\begin{Lemma}\label{Asprep}
If $\X\rightarrow X$ in $\mathbf{CFG}/\mathbf{Flog}_S$ is an Asp-representable morphism from a stack to an algebraic log space. Then $\X$ is an algebraic space.
\end{Lemma}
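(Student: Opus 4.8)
The plan is to reduce the log statement to the already-known statement for ordinary (non-log) stacks by passing through the correspondence $\Phi_{log}$ and the equivalence $\mathbf{FLAlg}_{\underline{S}}\simeq\mathbf{AlgL}_S$ of Theorem \ref{Phi_corres_Asp}. First I would record what needs to be shown: $\X$ is a stack over $\mathbf{Flog}_S$ equipped with an Asp-representable morphism $g:\X\rightarrow X$ to an algebraic log space $X$, and I want to conclude that $\X$ is itself an algebraic log space, i.e. that it has representable diagonal and admits a strict log \'etale chart by a fine log scheme.

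The key step is to produce a chart. Since $X$ is an algebraic log space, it has a chart $p:U\rightarrow X$ with $U$ a fine log scheme, $p$ strict, surjective and log \'etale. I would form the fiber product $\X\times_X U$. Because $g$ is Asp-representable, $\X\times_X U$ lies in $\mathbf{AlgL}_S$, so it is an algebraic log space; denote it $V$, and let $q:V\rightarrow\X$ be the projection. The projection $V\rightarrow U$ is the base change of $p$, hence strict, surjective, log \'etale (these properties are stable under base change). I would then take a strict log \'etale chart $W\rightarrow V$ with $W$ a fine log scheme (using that $V$ is an algebraic log space), and compose to obtain $W\rightarrow V\rightarrow\X$. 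The composite is strict log \'etale and surjective onto $\X$ because each factor is, so it provides the required chart of $\X$ by a fine log scheme.

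It remains to check that the diagonal $\Delta_{\X}$ is representable. I would argue this exactly as in the non-log case: for a fine log scheme $T$ with two maps $a,b:T\rightrightarrows\X$, the fiber product $T\times_{\X\times\X}T$ computes the sheaf $\underline{\mathrm{Isom}}_{\X}(a,b)$, and I would show it is a fine log scheme by base-changing along the chart of $X$. Concretely, since $g$ is Asp-representable its diagonal relative to $X$ is Asp-representable, and combined with the representability of $\Delta_X$ (which holds because $X$ is an algebraic log space) one gets that $\Delta_{\X}$ is Asp-representable into $\mathbf{Flog}_S$; so $\X\times_{\X\times\X}\X\in\mathbf{Flog}_S$ after suitable base change, giving representability. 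The main obstacle I anticipate is the bookkeeping in the diagonal argument: one must verify that Asp-representability of $g$ propagates correctly to the diagonal $\Delta_g$ and that this, together with $\Delta_X$ being representable, yields representability (rather than merely Asp-representability) of $\Delta_{\X}$ against fine log schemes. This is the standard ``representable $+$ representable target $\Rightarrow$ representable'' composition, but its log version requires care with strictness and with the fact that fiber products in $\mathbf{Flog}_S$ differ from those in $\mathbf{Log}_S$; once that is handled, the conclusion that $\X$ is an algebraic log space is immediate from Definition \ref{logalg}.
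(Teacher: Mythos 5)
Your chart construction and your treatment of the diagonal follow the paper's proof closely: the paper packages the diagonal computation into the identity $U\times_{\X\times_S\X}\X\simeq (U\times_X\X)\times_{(U\times_X\X)\times_U(U\times_X\X)}U$, which reduces everything to the representable diagonal of the algebraic log space $U\times_X\X$ over the fine log scheme $U$ --- exactly the reduction you sketch. There is, however, a genuine gap: you never show that $\X$ is equivalent to a \emph{sheaf}. By Definition \ref{logalg} an algebraic log space is a sheaf over $\mathbf{Flog}_{S,fppf}$ (or a CFG equivalent to one), whereas your hypothesis only gives a stack in groupoids. Representability of $\Delta_{\X}$ together with a representable, surjective, strict log \'etale chart $W\rightarrow\X$ does \emph{not} force automorphism groups to be trivial; it only exhibits $\X$ as the quotient of the groupoid $W\times_{\X}W\rightrightarrows W$, i.e.\ as a DM log stack (the classifying stack of a finite \'etale group, given the trivial log structure, satisfies both of your conditions without being a space). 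The paper therefore devotes its first --- and most substantive --- step to proving that $\X$ is a setoid: given $\beta\in Aut_{\X_T}(x)$, its image under $\X\rightarrow X$ is the identity because $X$ is a sheaf, so the pair $(\beta,id)$ defines an automorphism of the corresponding object of $\X\times_X U$; Asp-representability makes $\X\times_X U$ an algebraic log space, hence a sheaf, forcing $\beta=id_x$. You need to add this argument; without it the conclusion does not follow. A minor separate slip: in your chart construction it is the projection $V=\X\times_X U\rightarrow\X$ (not $V\rightarrow U$) that is the base change of the chart $p:U\rightarrow X$ and hence strict, surjective, log \'etale --- this is the property your composite $W\rightarrow V\rightarrow\X$ actually requires.
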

\begin{proof}
It's sufficient to verify:
\begin{description}
  \item[$\X$ is setoid] It's sufficent to show that for any $x\in \X_{U}$, $Aut(x)=\{id_{x}\}$. Consider the cartesian diagram:
      $$\xymatrix{
      \X\times_{X}U \ar[r] \ar[d] & \X \ar[d]^{f}\\
      U \ar[r]^g & X
      }$$
      We know from the assumption that $\X\times_{X}U$ is a sheaf, i.e., for an object $\xi=(x,u,\alpha:f(x)\simeq g(u))$, $Aut(\xi)=\{id_{\xi}\}$. Pick $\beta\in Aut(x)$, since $X$ is an algebraic log space, $f(\beta)=id_{f(x)}$, hence $g(id_u)\alpha=\alpha f(\beta)$. So $(\beta, id_u)\in Aut(\xi)$, which implies that $\beta=id_x$ and $\X$ is setoid.
  \item[$\X$ has representable diagonal] Let $U\rightarrow\X\times_S\X$ be a morphism from a fine log scheme $U$, then $U\times_{\X\times_S\X}\X\simeq (U\times_X\X)\times_{(U\times_X\X)\times_U(U\times_X\X)}U$ is a fine log scheme. Hence the diagonal $\Delta_{\X}$ is representable.
  \item[$\X$ has a strict log \'etale cover] Pick a strict log \'etale cover $U\rightarrow X$ by a fine log scheme $U$, then $U\times_X\X\rightarrow\X$ is a strict surjective log \'etale morphism where $U\times_X\X$ is an algebraic log space. Choose a strict log \'etale cover $U'\rightarrow U\times_X\X$, the composition $U'\rightarrow\X$ gives a strict log \'etale cover.
\end{description}
\end{proof}

\begin{Corollary}\label{Asprep_2}
If $\X\rightarrow\Y$ in $\mathbf{CFG}/\mathbf{Fog}_S$ is Asp-representable, and $U$ is an algebraic log space with a morphism $U\rightarrow\Y$, then $\X\times_{\Y}U$ is an algebraic log space.
\end{Corollary}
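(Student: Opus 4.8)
The plan is to obtain this as an immediate consequence of Lemma \ref{Asprep}, via base change, rather than constructing a chart for $\X\times_{\Y}U$ by hand. Write $\mathcal{W}=\X\times_{\Y}U$ and let $q:\mathcal{W}\rightarrow U$ be the second projection, so that $q$ is the base change of $f:\X\rightarrow\Y$ along the structure morphism $g:U\rightarrow\Y$. The strategy is to check that $q$ inherits Asp-representability and that $\mathcal{W}$ is a stack, and then to quote Lemma \ref{Asprep} with target the algebraic log space $U$.

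First I would verify that $q$ is Asp-representable. This is the only genuinely computational point, and it is short: given any fine log scheme $T\rightarrow U$, transitivity of fibre products gives $\mathcal{W}\times_U T\simeq \X\times_{\Y}T$, where $T\rightarrow\Y$ is the composite $T\rightarrow U\rightarrow\Y$. Since $f$ is Asp-representable and $T\in\mathbf{Flog}_S$, the right-hand side lies in $\mathbf{AlgL}_S$ by Definition \ref{DEF_ab_rep_Asp}; hence every such $\mathcal{W}\times_U T$ is an algebraic log space, so $q$ is Asp-representable.

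Next I would record that $\mathcal{W}$ is a stack over $\mathbf{Flog}_S$, being a $2$-fibre product of $\X$ over $\Y$ along $g$; in particular $q$ is an Asp-representable morphism from a stack to the algebraic log space $U$. Lemma \ref{Asprep} then applies verbatim and shows that $\mathcal{W}=\X\times_{\Y}U$ is an algebraic log space, which is exactly the assertion.

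The step I expect to require the most care is the passage through Lemma \ref{Asprep}, whose hypothesis requires the source of the Asp-representable morphism to be a stack; one must make sure $\X$, and hence $\mathcal{W}$, really is a stack and not merely a category fibred in groupoids. If $\X$ is only assumed to be a category fibred in groupoids, the stack property of $\mathcal{W}$ has to be obtained by descent: choosing a chart $V\rightarrow U$ of the algebraic log space $U$, the base change $\mathcal{W}\times_U V\rightarrow\mathcal{W}$ is a strict, surjective, log \'etale cover, and each $\mathcal{W}\times_U T$ is a sheaf, so descent data can be glued along this cover, by the same mechanism as in Lemma \ref{descent_Asptostack}. Everything else---base-change stability of the relevant properties and the identification of the fibre products---is formal.
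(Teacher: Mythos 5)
Your proposal is correct and follows essentially the same route as the paper: the paper's proof is the one-line observation that $\X\times_{\Y}U\rightarrow U$ is Asp-representable and then an appeal to Lemma \ref{Asprep}. Your write-up merely fills in the details the paper leaves implicit (transitivity of fibre products for the Asp-representability of the projection, and the verification that the source is a stack), so there is nothing to change.
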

\begin{proof}
Notice that $\X\times_{\Y}U\rightarrow U$ is Asp-representable, so $\X\times_{\Y}U$ is an algebraic log space by Lemma \ref{Asprep}.
\end{proof}

\begin{Definition}\label{DEF_Alglogstack}
A stack over $\mathbf{Flog}_{S,fppf}$ is an \textbf{algebraic (resp. DM) log stack} if its diagonal is Asp-representable, and admit a strict, surjective, log smooth (resp. log \'etale) morphism $i:U\rightarrow X$ where $U$ is a fine log scheme. $i:U\rightarrow X$ is called a \textbf{smooth (resp. \'etale) chart}. Denote $\mathbf{ALS}_S$ the strict 2-category of algebraic log stacks.
\end{Definition}
\textbf{Remark:} the algebraic log stack we define is general than that of Olsson's in \cite{Ol4}, by dropping the locally of finite presentable condition.

\begin{Lemma}
Given $\X, \Y, \mathcal{Z} \in \mathbf{CFG}/\mathbf{Flog}_S$ and morphisms $\X\rightarrow \mathcal{Z}$, $\Y\rightarrow \mathcal{Z}$. If $\X$, $\Y$, $\mathcal{Z}$ are algebraic log stacks, so is $\X\times_{\mathcal{Z}}\Y$.
\end{Lemma}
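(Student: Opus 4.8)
The plan is to check the two requirements of Definition \ref{DEF_Alglogstack} for $\mathcal{W}:=\X\times_{\mathcal{Z}}\Y$. That $\mathcal{W}$ is a stack over $\mathbf{Flog}_{S,fppf}$ is formal: a $2$-fibre product of stacks is a stack, since descent data for an object $(x,y,\alpha)$ glue componentwise using that $\X$, $\Y$, $\mathcal{Z}$ are stacks. So it remains to prove (i) the diagonal $\Delta_{\mathcal{W}}$ is Asp-representable, and (ii) $\mathcal{W}$ admits a strict, surjective, log smooth chart by a fine log scheme.

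For (i), write $F:\X\rightarrow\mathcal{Z}$, $G:\Y\rightarrow\mathcal{Z}$ for the two structure morphisms and fix a morphism $T\rightarrow\mathcal{W}\times_S\mathcal{W}$ from a fine log scheme, i.e.\ two objects $\xi_k=(x_k,y_k,\alpha_k)\in\mathcal{W}_T$ with $\alpha_k:F(x_k)\simeq G(y_k)$, $k=1,2$. The base change $\mathcal{W}\times_{\mathcal{W}\times_S\mathcal{W}}T$ is the sheaf $\mathrm{Isom}_T(\xi_1,\xi_2)$ of isomorphisms $\xi_1\simeq\xi_2$, i.e.\ of pairs $(\phi,\psi)$ with $\phi:x_1\simeq x_2$ in $\X$, $\psi:y_1\simeq y_2$ in $\Y$, and $\alpha_2\circ F(\phi)=G(\psi)\circ\alpha_1$. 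Rewriting this compatibility identifies
$$\mathrm{Isom}_T(\xi_1,\xi_2)\simeq\bigl(\X\times_{\X\times_S\X}T\bigr)\times_{\,\mathcal{Z}\times_{\mathcal{Z}\times_S\mathcal{Z}}T\,}\bigl(\Y\times_{\Y\times_S\Y}T\bigr),$$
the two maps to the middle term being $\phi\mapsto F(\phi)$ and $\psi\mapsto\alpha_2^{-1}\circ G(\psi)\circ\alpha_1$. Asp-representability of $\Delta_{\X}$, $\Delta_{\Y}$, $\Delta_{\mathcal{Z}}$ makes the three factors algebraic log spaces, and a fibre product of algebraic log spaces over an algebraic log space is again one; hence $\mathrm{Isom}_T(\xi_1,\xi_2)$ is an algebraic log space and $\Delta_{\mathcal{W}}$ is Asp-representable.

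For (ii), pick smooth charts $a:U\rightarrow\X$ and $b:V\rightarrow\Y$ with $U,V$ fine log schemes. Since $\Delta_{\mathcal{Z}}$ is Asp-representable, the projection $U\times_{\mathcal{Z}}V\rightarrow U\times_S V$ is the base change of $\Delta_{\mathcal{Z}}$ along $U\times_S V\rightarrow\mathcal{Z}\times_S\mathcal{Z}$, hence Asp-representable; as $U\times_S V$ is a fine log scheme, Corollary \ref{Asprep_2} shows $U\times_{\mathcal{Z}}V$ is an algebraic log space. The induced morphism $U\times_{\mathcal{Z}}V\rightarrow\mathcal{W}$ factors as $U\times_{\mathcal{Z}}V\rightarrow\X\times_{\mathcal{Z}}V\rightarrow\mathcal{W}$, the first map being the base change of $a$ and the second that of $b$; strictness, surjectivity and log smoothness being stable under base change and composition, this morphism is strict, surjective and log smooth. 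Choosing a strict surjective log \'etale chart $P\rightarrow U\times_{\mathcal{Z}}V$ from a fine log scheme (Definition \ref{logalg}) and composing yields the desired strict, surjective, log smooth chart $P\rightarrow\mathcal{W}$.

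The step I expect to demand the most care is the identification of the Isom-sheaf with the displayed triple fibre product: one must track the compatibility $\alpha_2\circ F(\phi)=G(\psi)\circ\alpha_1$ and see that it produces the conjugated transition map $\psi\mapsto\alpha_2^{-1}\circ G(\psi)\circ\alpha_1$. A secondary point worth emphasizing is that a chart in the sense of Definition \ref{DEF_Alglogstack} must issue from a fine log scheme, which forces the final passage from the algebraic log space $U\times_{\mathcal{Z}}V$ to its own fine-log-scheme chart $P$; everything else reduces to the stability of strictness, surjectivity and log smoothness and to the stack property of the fibre product.
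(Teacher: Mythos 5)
Your proof is correct and is precisely the ``standard argument'' that the paper invokes without writing out: the paper's proof of this lemma consists only of the phrase ``By standard argument.'' Your identification of the Isom-sheaf as a fibre product of the three Isom-sheaves and your construction of the chart via $U\times_{\mathcal{Z}}V\rightarrow U\times_S V$ being a base change of $\Delta_{\mathcal{Z}}$ are exactly the expected details, and both steps check out against the paper's Definition \ref{DEF_Alglogstack}, Corollary \ref{Asprep_2}, and the analogous lemma for algebraic log spaces.
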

\begin{proof}
By standard argument.
\end{proof}
We can construct 2-fiber products in $\mathbf{FLAS}_{\underline{S}}$ as we did for algebraic log spaces.

\begin{Lemma}\label{integralpart_stack}
The inclusion functor $\mathbf{FLAS}_{\underline{S}}\subseteq\mathbf{LAS}_{\underline{S}}$ has a right adjoint `int', which respects strict morphism.  If $f:\X\rightarrow \Y$ is a strict morphism, then $\X\times_{\Y}\Y^{int}\simeq \X^{int}$.
\end{Lemma}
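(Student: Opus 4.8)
The plan is to mirror the proof of Lemma~\ref{integralpart} at the level of a smooth presentation, replacing the \'etale-local charts used there by a smooth groupoid atlas and then descending. Given $(\underline{\X},\M_{\X})\in\mathbf{LAS}_{\underline{S}}$, I would first choose a smooth atlas $U\to\underline{\X}$ by a scheme, equip $U$ with the pullback log structure, and set $R=U\times_{\underline{\X}}U$ with its pullback log structure. Because both $\M_U$ and $\M_R$ are pulled back from $\M_{\X}$, the two projections $s,t:R\to U$ are \emph{strict}, and they are smooth since they are base changes of the smooth atlas map. Thus $(U,R,s,t,c)$ is a groupoid in log algebraic spaces with strict smooth source and target.

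The key steps then run as follows. First I would apply the space-level right adjoint from Lemma~\ref{integralpart} levelwise to obtain $U^{int}$ and $R^{int}$. Functoriality of int produces maps $s^{int},t^{int}:R^{int}\to U^{int}$, and the identity $R\times_{U}U^{int}\simeq R^{int}$ of Lemma~\ref{integralpart}, valid for the strict map $s$ (and separately for $t$), identifies $s^{int}$ (resp. $t^{int}$) with the base change of $s$ (resp. $t$) along $U^{int}\to U$; since $s,t$ are strict and smooth and these properties are stable under base change, $s^{int},t^{int}$ are again strict and smooth. Using Lemma~\ref{fiberpro_int} and the strictness of $s,t$ one checks that the composition law $c$ and the cocycle also transport to $R^{int}$, so $(U^{int},R^{int},s^{int},t^{int},c^{int})$ is a strict smooth groupoid in fine log algebraic spaces. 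I would then set $\X^{int}:=[U^{int}/R^{int}]$: its underlying algebraic stack $[\underline{U^{int}}/\underline{R^{int}}]$ is algebraic by the classical smooth-groupoid construction, and the fine log structure on $U^{int}$ descends to it along the strict maps $s^{int},t^{int}$ by fppf descent of fine log structures (Theorem~\ref{Olsson_descent_fine_structure}), giving $\X^{int}\in\mathbf{FLAS}_{\underline{S}}$.

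To establish that $\X\mapsto\X^{int}$ is right adjoint to the inclusion, I would verify the universal property chart-by-chart: for a fine log algebraic stack $\mathcal{T}$ and a morphism $\mathcal{T}\to(\underline{\X},\M_{\X})$ in $\mathbf{LAS}_{\underline{S}}$, pull back along the atlas, apply the space-level factorization of Lemma~\ref{integralpart}, and descend the resulting factorization using that morphisms and fine log structures satisfy fppf descent. The uniqueness in the space-level adjunction both guarantees the descent is unobstructed and shows $\X^{int}$ is independent of the chosen atlas up to canonical equivalence; the same bookkeeping shows int respects strict morphisms. Finally, for a strict morphism $f:\X\to\Y$ the equivalence $\X\times_{\Y}\Y^{int}\simeq\X^{int}$ follows by applying the corresponding space-level identity on compatible atlases of $\X$ and $\Y$ and descending.

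The main obstacle I anticipate is precisely the descent step: ensuring that the levelwise application of int is compatible with the entire groupoid structure --- not just the two projections $s,t$ but also the composition $c$, the unit, and the inverse, together with their coherence --- so that $[U^{int}/R^{int}]$ is genuinely the value of the right adjoint rather than an artifact of the chosen presentation. The leverage here is entirely the identity $R\times_{U}U^{int}\simeq R^{int}$ for strict maps, which reduces the groupoid compatibilities to base-change statements; one must still confirm it interacts correctly with the $\mathbf{FLAlg}$-fiber products appearing in $c$ (which, per Lemma~\ref{fiberpro_int}, are the integral parts of the $\mathbf{LAlg}$-fiber products). An alternative that avoids groupoids altogether is to imitate Lemma~\ref{integralpart} verbatim: build a quasi-coherent sheaf of algebras $\mathcal{O}_{\underline{\X}}^{Int}$ directly on the stack $\underline{\X}$ from smooth-local charts and set $\underline{\X}^{int}=Spec_{\underline{\X}}\mathcal{O}_{\underline{\X}}^{Int}$; this shifts the difficulty to checking quasi-coherence and the closed-immersion property over a stack, but the technical heart --- fppf descent compatibility of the chart-wise construction --- is the same.
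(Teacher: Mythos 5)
Your proposal is correct in substance but takes a different route from the paper. The paper's proof is exactly the ``alternative'' you sketch in your last sentence: it constructs a quasi-coherent sheaf of log algebras $\M^{Int}\rightarrow\mathcal{O}_{\X}^{Int}$ on the stack itself, smooth-locally as in Lemma~\ref{integralpart}, glues by descent of quasi-coherent sheaves, and sets $\X^{int}=Spec_{\X}\mathcal{O}_{\X}^{Int}$; the universal property then follows because any log morphism from a fine stack factors the monoid map through $\M^{Int}$, hence the algebra map through $\mathcal{O}_{\X}^{Int}$. This is lighter than your groupoid-presentation argument because descending a sheaf of algebras is a one-categorical matter, whereas you must check that levelwise application of int is compatible with the full groupoid structure $(s,t,c,e,i)$ and that the resulting quotient is independent of the atlas --- precisely the obstacle you flag. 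Your key leverage, $R\times_{U}U^{int}\simeq R^{int}$ for the strict maps $s$ and $t$, does make this work (in particular it shows $\underline{U^{int}}$ is an $\underline{R}$-invariant closed subspace, so $[\underline{U^{int}}/\underline{R^{int}}]$ is a closed substack of $\underline{\X}$), and strict morphisms are integral in the sense of Lemma~\ref{fiberpro_int}, so the fiber products in $c$ behave. One point to tighten: in your verification of the universal property, $\mathcal{T}\times_{\underline{\X}}U$ is a fine log algebraic \emph{stack}, not a space, so you cannot literally ``apply the space-level factorization of Lemma~\ref{integralpart}'' there; you need either a further atlas of $\mathcal{T}$ (and then 2-categorical descent of the factorization, using its uniqueness), or the paper's sheaf-theoretic argument, which sidesteps this entirely. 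What your approach buys is a presentation-theoretic picture of $\X^{int}$ that fits the groupoid machinery of Section~5; what the paper's buys is brevity and an immediate proof that $\X^{int}\rightarrow\X$ is a closed immersion and that the construction is functorial, from which the strictness and base-change claims fall out.
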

\begin{proof}
Given a log algebraic stack $(\X,\M)$. We construct a sheaf of log algebra over $(\mathcal{O}_{\X},\M)$ as in Lemma \ref{integralpart}:
$$\xymatrix{
\M \ar[r] \ar[d] & \mathcal{O}_{\X} \ar[d] \\
\M^{Int} \ar[r] & \mathcal{O}_{\X}^{Int}
}$$
Define $\X^{int}=Spec_{\X}\mathcal{O}_{\X}^{Int}$ with log structure induced from $\M^{Int}\rightarrow\mathcal{O}_{\X}^{Int}$, which is functorial. Any morphism from a fine log algebraic stack to $(\X,\M)$ factors through $\X^{int}$. So the functor $\X\mapsto {\X}^{int}$ is the right adjoint to the inclusion functor $\mathbf{FLAS}_{\underline{S}}\subseteq\mathbf{LAS}_{\underline{S}}$.

The rest of the Lemma is obvious from the construction.
\end{proof}
\textbf{Remark:} From the construction we see that the canonical morphism $\X^{int}\rightarrow \X$ is a closed immersion. We call $\X^{int}$ the \emph{integral part of $(\X,\M)$}.

\begin{Corollary}
$\mathbf{FLAS}_{\underline{S}}$ has fiber products.
\end{Corollary}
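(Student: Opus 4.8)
The plan is to follow verbatim the argument used for the corresponding corollary for $\mathbf{FLAlg}_{\underline{S}}$: first construct the $2$-fiber product in the larger category $\mathbf{LAS}_{\underline{S}}$ of (coherent) log algebraic stacks, and then push it into $\mathbf{FLAS}_{\underline{S}}$ by applying the right adjoint $\mathrm{int}$ supplied by Lemma \ref{integralpart_stack}. Concretely, given a diagram $\X_2\rightarrow\X_1\leftarrow\X_3$ in $\mathbf{FLAS}_{\underline{S}}$, I set $\mathcal{Z}:=\X_2\times_{\X_1}\X_3$ in $\mathbf{LAS}_{\underline{S}}$ and propose $\mathcal{Z}^{int}$ as the fiber product in $\mathbf{FLAS}_{\underline{S}}$.

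First I would note that $\mathbf{LAS}_{\underline{S}}$ has $2$-fiber products. The underlying algebraic stack of $\mathcal{Z}$ is the ordinary $2$-fiber product $\underline{\X_2}\times_{\underline{\X_1}}\underline{\X_3}$ of algebraic stacks, which exists and is again an algebraic stack, and the log structure is the coherent one produced by the abstract $2$-fiber product construction of Section 2 in $\mathbf{LogCFG}/\mathbf{Sch}_{\underline{S}}$ (fibrewise the coherent fiber product $\M_2x_2\times_{\M_1}\M_3x_3$ of log structures); by Lemma \ref{AB_Phi_preserveproduct} this is compatible with $\Phi_{log}$. Then, since $\mathrm{int}$ is right adjoint to the full inclusion $\mathbf{FLAS}_{\underline{S}}\hookrightarrow\mathbf{LAS}_{\underline{S}}$, for every fine log algebraic stack $\mathcal{T}$ there are natural equivalences of Hom-groupoids
\begin{align*}
\mathbf{FLAS}_{\underline{S}}(\mathcal{T},\mathcal{Z}^{int})
&\simeq \mathbf{LAS}_{\underline{S}}(\mathcal{T},\mathcal{Z})\\
&\simeq \mathbf{LAS}_{\underline{S}}(\mathcal{T},\X_2)\times_{\mathbf{LAS}_{\underline{S}}(\mathcal{T},\X_1)}\mathbf{LAS}_{\underline{S}}(\mathcal{T},\X_3)\\
&\simeq \mathbf{FLAS}_{\underline{S}}(\mathcal{T},\X_2)\times_{\mathbf{FLAS}_{\underline{S}}(\mathcal{T},\X_1)}\mathbf{FLAS}_{\underline{S}}(\mathcal{T},\X_3),
\end{align*}
where the first equivalence is the adjunction of Lemma \ref{integralpart_stack}, the second is the universal property of $\mathcal{Z}$ in $\mathbf{LAS}_{\underline{S}}$, and the third uses that the inclusion is full and the $\X_i$ are already fine. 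This is precisely the universal property of the $2$-fiber product of $\X_2$ and $\X_3$ over $\X_1$ in $\mathbf{FLAS}_{\underline{S}}$, so $\mathcal{Z}^{int}$ is the desired object.

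I expect the only genuinely delicate point to be the coherence of the fiber-product log structure on $\mathcal{Z}$ together with its compatibility with the direct construction of Lemma \ref{fiberpro_int}: when one of the two structure maps is integral, Lemma \ref{fiberpro_int} already yields a fine fiber product with no integralization needed, and I would remark that the two recipes agree because $\mathrm{int}$ acts as the identity on an object that is already fine, exactly as in the $\mathbf{FLAlg}_{\underline{S}}$ case. Everything else is a formal consequence of the adjunction and reduces word for word to the algebraic-space argument, so no essentially new input is required.
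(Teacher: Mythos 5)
Your proposal is correct and follows the same route the paper intends: the paper explicitly says that $2$-fiber products in $\mathbf{FLAS}_{\underline{S}}$ are constructed ``as we did for algebraic log spaces,'' i.e.\ form the fiber product in $\mathbf{LAS}_{\underline{S}}$ and apply the right adjoint $\mathrm{int}$ of Lemma \ref{integralpart_stack}. Your extra verification of the universal property via the adjunction, and the remark on compatibility with Lemma \ref{fiberpro_int}, simply spell out what the paper leaves implicit.
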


The next Proposition is just a restatement of Lemma \ref{integralpart_stack}:

\begin{Proposition}
$\Phi_{log}|_{\mathbf{FLAS}_{\underline{S}}}$ preserves fiber products.
\end{Proposition}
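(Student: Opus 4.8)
The plan is to reduce the claim to two facts already in hand: that the abstract Gillam functor preserves the fiber products of $\mathbf{LogCFG}/\mathbf{Sch}_{\underline S}$ (Lemma \ref{AB_Phi_preserveproduct}), and that $\Phi_{log}$ does not distinguish a coherent log algebraic stack from its integral part (Lemma \ref{integralpart_stack}). By the construction in the preceding corollary (``as we did for algebraic log spaces''), the fiber product of a diagram $\X_2\to\X_1\leftarrow\X_3$ in $\mathbf{FLAS}_{\underline S}$ is $\mathcal P:=(\X_2\times_{\X_1}\X_3)^{int}$, where $\X_2\times_{\X_1}\X_3$ is the fiber product formed in $\mathbf{LAS}_{\underline S}$, that is the fiber product of the underlying objects in $\mathbf{LogCFG}/\mathbf{Sch}_{\underline S}$ equipped with the $\mathbf{CLog}_S$-fiber product of log structures. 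It therefore suffices to produce a chain of equivalences in $\mathbf{CFG}/\mathbf{Flog}_S$
$$\Phi_{log}\mathcal P\simeq\Phi_{log}(\X_2\times_{\X_1}\X_3)\simeq\Phi_{log}\X_2\times_{\Phi_{log}\X_1}\Phi_{log}\X_3,$$
the right-hand side being the ordinary 2-fiber product in $\mathbf{CFG}/\mathbf{Flog}_S$.

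The right-hand equivalence is exactly Lemma \ref{AB_Phi_preserveproduct}, applied to the objects of $\mathbf{LAS}_{\underline S}\subseteq\mathbf{LogCFG}/\mathbf{Sch}_{\underline S}$: the fiber product preserved there is precisely the $\mathbf{LAS}$-fiber product. The left-hand equivalence is the substance of the proof, and I would establish the general statement that $\Phi_{log}\mathcal Z\simeq\Phi_{log}(\mathcal Z^{int})$ for every $\mathcal Z\in\mathbf{LAS}_{\underline S}$, in exact parallel with the proof of Lemma \ref{Phi_presevescheme}, where the identity $\Phi_{log}X=h_{X^{int}}$ was used for log schemes. Since $int$ is right adjoint to the inclusion $\mathbf{FLAS}_{\underline S}\subseteq\mathbf{LAS}_{\underline S}$ (Lemma \ref{integralpart_stack}), every morphism from a fine log scheme to $\mathcal Z$ factors uniquely through the closed immersion $\mathcal Z^{int}\to\mathcal Z$. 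Consequently the Gillam fibered categories $\Phi_{log}\mathcal Z$ and $\Phi_{log}(\mathcal Z^{int})$, whose objects are the pairs $(x,f\colon x'\to\M x)$ with $x'$ a fine test object (Setting \textbf{S2}, in which $int$ preserves cartesian arrows), have canonically isomorphic groupoids of such pairs over each object of $\mathbf{Flog}_S$, compatibly with the projections to $\mathbf{Flog}_S$; this is the sought equivalence.

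Concatenating the two equivalences gives $\Phi_{log}\mathcal P\simeq\Phi_{log}\X_2\times_{\Phi_{log}\X_1}\Phi_{log}\X_3$, which is the assertion. I expect the only delicate point to be the left-hand equivalence at the level of the 2-category rather than of sheaves: one must verify that the adjunction of Lemma \ref{integralpart_stack} induces not merely a bijection on objects but a genuine equivalence of the two Gillam groupoid fibrations, matching morphisms and --- using the compatibility condition \textbf{B2} together with the fact that $int$ preserves cartesian arrows --- matching their minimal objects. Everything else is formal: the identification of the $\mathbf{FLAS}$-fiber product with the integral part of the $\mathbf{LAS}$-fiber product comes from the construction in the preceding corollary, and the preservation of the latter product is imported verbatim from Lemma \ref{AB_Phi_preserveproduct}, so no new computation is required.
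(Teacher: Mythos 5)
Your argument is correct and is essentially the paper's own: the paper dismisses this proposition as ``just a restatement of Lemma \ref{integralpart_stack}'', and your chain $\Phi_{log}\mathcal{P}\simeq\Phi_{log}(\X_2\times_{\X_1}\X_3)\simeq\Phi_{log}\X_2\times_{\Phi_{log}\X_1}\Phi_{log}\X_3$, resting on Lemma \ref{AB_Phi_preserveproduct} and on the adjunction $\Phi_{log}\mathcal{Z}\simeq\Phi_{log}(\mathcal{Z}^{int})$, is exactly the intended reasoning, merely written out in full.
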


We can define properties of algebraic log stack as we do to algebraic stack.

\begin{Definition}
Let $\mathbf{P}$ be a property of fine log schemes of a local nature for the strict log smooth topology. An algebraic log stack (fine log algebraic stack) $\X$ has property $\mathbf{P}$ if one (and hence for every) of its fine smooth charts $U\rightarrow\X$, $U$ has property $\mathbf{P}$. If $\mathbf{Q}$ is a property of schemes local nature for the smooth topology, a log algebraic stack has property $\mathbf{Q}$ if the underlining algebraic stack has property $\mathbf{Q}$.
\end{Definition}
Hence we can call an algebraic log stack (fine log algebraic stack): locally Noetherian, regular, normal, $S_n$, Cohen-Macaulay, reduced, of character $p$, saturated, log regular, etc.

\begin{Definition}
An algebraic log stack is quasi-compact if there is a chart $U\rightarrow X$ such that $U$ is a quasi-compact fine log scheme. A morphism of algebraic log stacks $\X\rightarrow\Y$ is quasi-compact if for any quasi-compact fine log scheme $U$, $U\times_\Y\X$ is quasi-compact. We say that $\X\rightarrow\Y$ is quasi-separated if the diagonal $\Delta_{\X/\Y}:\X\rightarrow \X\times_\Y\X$ is quasi-compact and quasi-separated. $\X$ is called noetherian if it is quasi-compact, quasi-separate, and locally noetherian.
\end{Definition}

\begin{Definition}\label{DEF_morphisms_stack}
Let $\mathbf{P}$ be a property of morphisms in $\mathbf{Flog}$, smooth (resp. \'etale) local on the source-and-target. A morphism $\X\rightarrow\Y$ of algebraic (resp. DM) log stack (fine log algebraic (resp. DM) stack) has property $\mathbf{P}$ if for one (and hence for every) commutative diagram
$$\xymatrix{
X\ar[r]^f \ar[d] & Y \ar[d] \\
\X \ar[r] & \Y
}$$
where the vertical arrows are smooth (resp. \'etale) charts, $f$ has property $\mathbf{P}$.
\end{Definition}
Hence we can define locally of finite representation, flat, smooth, normal, Cohen-Macaulay, $S_n$, strict, integral, saturated, Kummer, Cartier, log smooth, log flat, morphisms between algebraic log stacks (log algebraic stacks). And log \'etale, log unramified morphisms between DM log stacks (More generally relatively DM-morphisms (\ref{defrep_stack})).

\textbf{Remark:} There are also notions of formal log smooth (\'etale, unramified). It turns out that log smooth is equivalent to locally of finite representation and formal log smooth. For relatively DM-morphism, log \'etale (unramified) is equivalent to locally of finite representation, and formal log \'etale (unramified).

\begin{Definition}\label{defrep_stack}
A morphism $f:\X\rightarrow\Y$ in $\mathbf{ALS}_S$ is called DM (resp. representable, Asp-representable) if for any morphism $U\rightarrow \Y$ with $U$ a fine log scheme, $\X\times_{\Y}U$ is a DM-log stack (resp. fine log scheme, algebraic log space). For a property $\mathbf{P}$ of \'etale local on the source-and-target, we say that a DM-morphism $f:\X\rightarrow\Y$ have property $\mathbf{P}$ if for one (and hence for every) strict log smooth cover $U\rightarrow \Y$, $\X\times_{\Y}U\rightarrow U$ has property $\mathbf{P}$.
\end{Definition}
\textbf{Remark:}
\begin{enumerate}
  \item Let $\mathbf{P}$ be a property of morphisms between fine log schemes, smooth local on the source-and-target, stable under base change and smooth local on base. If the morphism we consider is Asp-representable (Definition \ref{DEF_ab_rep_Asp}), then the two definitions of property $\mathbf{P}$ are compatible.
  \item If $f:\X\rightarrow\Y$ is DM and $\mathcal{Z}\rightarrow \Y$ is a morphism from DM-stack $\mathcal{Z}$, then $\X\times_{\Y}\mathcal{Z}$ is a DM-stack. 
\end{enumerate}
Hence we can define a relative DM morphisms to be log \'etale, log unramified.

\subsection{Correspondence between log algebraic stacks and algebraic log stacks}

\begin{Definition}
$\mathbf{P}$ is a property of morphisms between algebraic spaces, define the property in $\mathbf{LAlg}$ of `strict $\mathbf{P}$' as $\mathbf{P}^{strict}=\{f\textit{ is strict and } \underline{f}\in \mathbf{P}\}$.
\end{Definition}
\textbf{Remark:} If $\mathbf{P}$ is a property of morphisms in $\mathbf{Sch}$, stable under base change (smooth or \'etale locally on the base), then $\mathbf{P}^{strict}=\{f\textit{ is strict and } \underline{f}\in \mathbf{P}\}$ is a property in $\mathbf{LAlg}$ ($\mathbf{FLAlg}$) stable under base change  (smooth or \'etale locally on the base). However, properties such as `morphism whose underlining morphism on scheme is smooth' is not stable under base change (smooth or \'etale locally on the base). By Theorem \ref{Phi_corres_Asp}, we can also consider $\mathbf{P}^{strict}$ as a property in $\mathbf{AlgL}$.

\begin{Lemma}\label{Phi_Stackrep_morph}
Let $\mathbf{P}$ be a property of morphisms between algebraic spaces, \'etale local on the source-and-target stable under base change and smooth locally on the base. $f:\X\rightarrow \Y$ is a morphism in $\mathbf{LogCFG}/\mathbf{Sch}_{\underline{S}}$.
\begin{itemize}
  \item $\Phi_{log}$ sends  Asp-representable morphism $f:\X\rightarrow \Y$ to Asp-representable morphism. If $f$ has property $\mathbf{P}^{strict}$, then $\Phi_{log}f$ has property $\mathbf{P}^{strict}$.

  \item If $\X\in\mathbf{FLogCFG}/\mathbf{Sch}_{\underline{S}}$, then $f$ is Asp-representable if and only if $\Phi_{log}f$ is Asp-representable.  If $f\in\mathbf{FLogCFG}/\mathbf{Sch}_{\underline{S}}$, then $f$ has property $\mathbf{P}^{strict}$ if and only if $\Phi_{log}f$ has property $\mathbf{P}^{strict}$.
\end{itemize}
\end{Lemma}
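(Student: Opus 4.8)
The plan is to run the proof of Lemma~\ref{Phi_rep_morph} essentially verbatim, replacing every appeal to scheme-level representability by the algebraic-log-space input provided by Proposition~\ref{Phialgsp}, Proposition~\ref{Phi_premorph_Asp} and Theorem~\ref{Phi_corres_Asp}. Concretely, for a fine log scheme $U$ and a morphism $U\rightarrow\Y$, I would set $V=\X\times_{\Y}U$ and form the two squares
$$\xymatrix{
V \ar[r] \ar[d] & U \ar[d]^{u} & \Phi_{log}V \ar[r] \ar[d] & \Phi_{log}U \ar[d] \\
\X \ar[r]^{f} & \Y & \Phi_{log}\X \ar[r]^{\Phi_{log}f} & \Phi_{log}\Y
}$$
the right one being cartesian by Lemma~\ref{AB_Phi_preserveproduct}, and the identification $\mathrm{Hom}(U,\Y)=\mathrm{Hom}(\Phi_{log}U,\Phi_{log}\Y)$ of Proposition~\ref{AB_Phi} matching the fine-log-scheme points of $\Phi_{log}\Y$ with those of $\Y$. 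Since $\Phi_{log}U=h_U$ for a fine $U$, every fine-log-scheme point of $\Phi_{log}\Y$ arises this way, so all fiber products defining Asp-representability and property $\mathbf{P}^{strict}$ of $\Phi_{log}f$ are of the form $\Phi_{log}V$.

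For the first bullet, if $f$ is Asp-representable then $\underline{f}$ is representable by algebraic spaces, so $\underline{V}$ is an algebraic space and $V$ is a (coherent) log algebraic space. Proposition~\ref{Phialgsp} then gives that $\Phi_{log}V=\Phi_{log}\X\times_{\Phi_{log}\Y}\Phi_{log}U$ is an algebraic log space, which is precisely Asp-representability of $\Phi_{log}f$. For $\mathbf{P}^{strict}$ I would invoke the remark following Definition~\ref{DEF_ab_rep_Asp} to reduce to strict $u$: then $V\rightarrow U$ is a morphism in $\mathbf{LAlg}/\underline{S}$ with property $\mathbf{P}^{strict}$, so by Proposition~\ref{Phi_premorph_Asp}(1) its image $\Phi_{log}V\rightarrow\Phi_{log}U$ has $\mathbf{P}^{strict}$; as this is the base change of $\Phi_{log}f$ along $\Phi_{log}U\rightarrow\Phi_{log}\Y$, the conclusion follows.

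For the second bullet, assume $\X\in\mathbf{FLogCFG}/\mathbf{Sch}_{\underline{S}}$. The direction ``$f$ Asp-representable $\Rightarrow$ $\Phi_{log}f$ Asp-representable'' is the first bullet. Conversely, since Asp-representability of $f$ can be tested after strict base change, I would take $u$ strict; then $V\rightarrow\X$ is strict and, $\X$ being fine, $V\in\mathbf{FLogCFG}$. If $\Phi_{log}f$ is Asp-representable then $\Phi_{log}V$ is an algebraic log space, so the iff part of Proposition~\ref{Phialgsp} forces $V$ to be a log algebraic space, i.e.\ $\underline{V}$ is an algebraic space; hence $f$ is Asp-representable. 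Finally, if $f\in\mathbf{FLogCFG}/\mathbf{Sch}_{\underline{S}}$, then for strict $u$ the morphism $V\rightarrow U$ lies in $\mathbf{FLAlg}/\underline{S}$, and Proposition~\ref{Phi_premorph_Asp}(2) yields the equivalence of $\mathbf{P}^{strict}$ for $V\rightarrow U$ and for $\Phi_{log}V\rightarrow\Phi_{log}U$, which globalizes to the desired equivalence for $f$ and $\Phi_{log}f$.

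The one point requiring genuine care, rather than mechanical transcription, is the reduction to \emph{strict} base changes in the converse directions: it is what guarantees that $V$ stays in $\mathbf{FLogCFG}$ (so that the fine-object halves of Proposition~\ref{Phialgsp} and Proposition~\ref{Phi_premorph_Asp} apply) and it is what bridges the two phrasings of Asp-representability, namely ``$\underline{f}$ representable by algebraic spaces'' in $\mathbf{LogCFG}/\mathbf{Sch}_{\underline{S}}$ versus ``$\X\times_{\Y}U\in\mathbf{AlgL}_S$'' in $\mathbf{CFG}/\mathbf{Flog}_S$. I expect no new difficulty beyond checking that strictness passes through the base-change squares and that the coherence of the fiber-product log structure on $V$ is retained, so that $V$ is honestly an object of $\mathbf{LAlg}_{\underline{S}}$ before Proposition~\ref{Phialgsp} is applied.
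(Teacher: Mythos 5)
Your proposal follows the paper's own proof essentially verbatim: the same pair of cartesian squares, the same appeals to Lemma~\ref{AB_Phi_preserveproduct} and Proposition~\ref{AB_Phi} to identify the relevant fiber products with $\Phi_{log}V$, Proposition~\ref{Phialgsp} (in its one-directional and iff forms) for Asp-representability, and the reduction to strict base changes so that $V$ stays fine and the $\mathbf{P}^{strict}$ statements transfer. The argument is correct and matches the paper's route, differing only in that you cite Proposition~\ref{Phi_premorph_Asp} explicitly where the paper simply notes that strictness of $f$ forces $V$ to be fine and $V\rightarrow U$ strict.
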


\begin{proof}
Consider the left cartesian diagram
$$\xymatrix{
V \ar[r]^{f'} \ar[d] & U \ar[d]^{u}       & \Phi_{log} V \ar[r] \ar[d] & \Phi_{log} U \ar[d]\\
\X \ar[r]^f & \Y                   & \Phi_{log} \X \ar[r]^{\Phi_{log}(f)} & \Phi_{log} \Y
}$$
where $U$ is a fine log scheme. The righthand diagram is cartesian due to Lemma \ref{AB_Phi_preserveproduct}. By Proposition \ref{AB_Phi}, $Hom(U,\Y)=Hom(\Phi_{log} U,\Phi_{log} \Y)$.  If $f:\X\rightarrow \Y$ is Asp-representable, then $V$ is a log algebraic space (not necessarily fine). It follows that $\Phi_{log} V$ is an algebraic log space (Theorem \ref{Phialgsp}) and $\Phi_{log}f$ is representable.

Notice that if $f$ is strict, then $V$ is fine and $f'$ is strict, hence $\Phi_{log}$ preserves $\mathbf{P}^{strict}$.

For the second part, assume that $\X\in\mathbf{FLogCFG}/\mathbf{Sch}_{\underline{S}}$ and the diagrams we considered above are cartesian and $u$ is strict. Then $V\in\mathbf{FLogCFG}/\mathbf{Sch}_{\underline{S}}$. By assumption that $\Phi_{log} f$ is Asp-representable, $\Phi_{log} V=h_{V'}$ is an algebraic log space. By Proposition \ref{Phialgsp}, $V$ is a fine log algebraic space. The fact that $\Phi_{log}$ respect the property $\mathbf{P}^{strict}$ is obvious.
\end{proof}

\begin{Proposition}\label{Phialgstack}
Given $\X\in\mathbf{LogCFG}/\mathbf{Sch}_{\underline{S}}$, if $\X$ is a log algebraic stack, then $\Phi_{log}\X$ is an algebraic log stack. If $\X\in\mathbf{FLogCFG}/\mathbf{Sch}_{\underline{S}}$, then $\X$ is a log algebraic stack if and only if $\Phi_{log}\X$ is an algebraic log structure. In the latter case, for a property $\mathbf{P}$ of fine log schemes of a local nature for the strict log smooth topology, $\X$ has property $\mathbf{P}$ if and only if $\Phi_{log}\X$ has property $\mathbf{P}$.
\end{Proposition}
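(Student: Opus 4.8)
The plan is to imitate, mutatis mutandis, the proof of Proposition \ref{Phialgsp}, replacing ``representable (by schemes)'' with ``Asp-representable (by algebraic spaces)'', ``\'etale chart'' with ``smooth chart'', and Lemma \ref{Phi_rep_morph} with its stack counterpart Lemma \ref{Phi_Stackrep_morph}. Concretely, I would unwind Definition \ref{DEF_Alglogstack} into three assertions about $\Phi_{log}\X$ --- that it is a stack over $\mathbf{Flog}_{S,fppf}$, that its diagonal is Asp-representable, and that it admits a strict log smooth surjective chart by a fine log scheme --- and verify each, keeping track of which implications are claimed only in the fine case $\X\in\mathbf{FLogCFG}/\mathbf{Sch}_{\underline{S}}$.

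First I would dispose of the stack condition. Since a log algebraic stack has, by Definition \ref{stack}, underlying groupoid fibration a stack for the fppf topology, $\X$ is a stack over $\mathbf{Sch}_{\underline{S},fppf}$; Proposition \ref{abGC} then immediately yields that $\Phi_{log}\X$ is a stack over $\mathbf{Flog}_S$ with its strict fppf topology, and gives the converse when $\X\in\mathbf{FLogCFG}/\mathbf{Sch}_{\underline{S}}$. Next the diagonal: being a log algebraic stack means precisely that the underlying diagonal is representable by algebraic spaces, i.e. $\Delta_\X$ is Asp-representable in $\mathbf{LogCFG}/\mathbf{Sch}_{\underline{S}}$ by Definition \ref{DEF_ab_rep_Asp}(2); since $\Phi_{log}$ commutes with fiber products (Lemma \ref{AB_Phi_preserveproduct}) we have $\Delta_{\Phi_{log}\X}=\Phi_{log}\Delta_\X$, so Lemma \ref{Phi_Stackrep_morph} gives that $\Delta_{\Phi_{log}\X}$ is Asp-representable, with the equivalence valid for $\X\in\mathbf{FLogCFG}/\mathbf{Sch}_{\underline{S}}$.

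The chart is the step requiring the most care, and it splits into two directions. For the forward direction I would start from a smooth surjective cover $\underline{U}\rightarrow\underline{\X}$ by a scheme and pull back the log structure of $\X$ so that $U\rightarrow\X$ becomes strict; this is a morphism with property $\mathbf{P}^{strict}$ for $\mathbf{P}=$ ``smooth and surjective'', so Lemma \ref{Phi_Stackrep_morph} sends it to $\Phi_{log}U\rightarrow\Phi_{log}\X$ with the same property, and since $\Phi_{log}U=h_{U^{int}}$ is a fine log scheme this is the required smooth chart. For the converse (asserted only for $\X\in\mathbf{FLogCFG}$), given a strict log smooth chart $V\rightarrow\Phi_{log}\X$ with $V$ a fine log scheme I would use the strict equivalence of Proposition \ref{logGC} to descend it to $U\rightarrow\X$ with $U$ fine, and then invoke Lemma \ref{Phi_Stackrep_morph} again to conclude it is strict log smooth and surjective, hence its underlying morphism is a smooth cover of $\underline{\X}$. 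Finally the statement about a property $\mathbf{P}$ of local nature for the strict log smooth topology follows because, by what precedes, $U\rightarrow\X$ is a fine chart exactly when $\Phi_{log}U\rightarrow\Phi_{log}\X$ is, combined with Proposition \ref{Phi_preproperty_Asp} applied to the charts, which records that $U$ has $\mathbf{P}$ iff $\Phi_{log}U$ does.

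The main point demanding care is the bookkeeping in the converse chart direction: one must be sure that ``smooth chart of the $\mathbf{Flog}$-stack $\Phi_{log}\X$'' translates correctly into ``smooth cover of the underlying algebraic stack $\underline{\X}$''. This translation is legitimate only because the descended morphism is strict, so that log smoothness of $U\rightarrow\X$ coincides with smoothness of $\underline{U}\rightarrow\underline{\X}$ in the sense of $\mathbf{P}^{strict}$ (using Theorem \ref{Olsson_Log_X}); the non-strict subtleties flagged in the Warning after Definition \ref{DEF_morphisms} do not intervene precisely because charts are taken strict. A secondary point to handle cleanly is that a log algebraic stack is allowed a merely coherent (non-fine) log structure, so in the forward direction the pulled-back chart is only coherent and one must pass to its integralization $U^{int}$ --- which is exactly what $\Phi_{log}U=h_{U^{int}}$ records --- in order to land among fine log schemes.
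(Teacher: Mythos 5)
Your proposal is correct and follows essentially the same route as the paper's own proof: Proposition \ref{abGC} for the stack condition, Lemma \ref{Phi_Stackrep_morph} (with compatibility of $\Phi_{log}$ and fiber products) for the diagonal and for transporting the strict log smooth chart, and Proposition \ref{logGC} to descend a chart of $\Phi_{log}\X$ back to $\X$ in the fine case. Your added remarks on pulling back the log structure to make the cover strict and on why strictness neutralizes the log-smooth versus smooth discrepancy are sensible elaborations of steps the paper leaves implicit, not a different argument.
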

\begin{proof}
By Proposition \ref{abGC}, $\Phi_{log}\X$ is a stack as long as $\X$ is a stack (for $\X\in\mathbf{FLogCFG}/\mathbf{Sch}_{\underline{S}}$, $\X$ is a stack if and only if $\Phi_{log}\X$ is a stack). It is sufficient to show:

\textbf{Representable of Diagonal:}

By Lemma \ref{Phi_Stackrep_morph}, the Asp-representability of $\Delta_{\X}$ implies the Asp-representability of $\Delta_{\Phi_{log}\X}$. And when $\X\in\mathbf{FLogCFG}/\mathbf{Sch}_{\underline{S}}$, $\Delta_{\X}$ is Asp-representable if and only if $\Delta_{\Phi_{log}\X}$ is representable.

\textbf{Existence of Covering:}

Suppose that we have a representable strict log smooth surjective morphism $U\rightarrow\X$, where $U$ is a log scheme. Then $\Phi_{log} U=h_{U^{int}}\rightarrow\Phi_{log}\X$ is a smooth chart (Lemma \ref{Phi_Stackrep_morph}).

On the other hand, if $\X\in\mathbf{FLogCFG}/\mathbf{Sch}_{\underline{S}}$ and there is a smooth chart $\Phi_{log} U\rightarrow\Phi_{log}\X$. By Proposition \ref{logGC}, this morphism descents to $U\rightarrow\X$. And $U\rightarrow\X$ is a strict log smooth cover by Lemma \ref{Phi_Stackrep_morph}.
\end{proof}

Next we study the correspondence of properties of morphisms.

\begin{Proposition}\label{Phi_premorph_Astack}
Let $\mathbf{P}$ (resp. $\mathbf{Q}$) be a property of morphisms between (resp. fine log) schemes \'etale local on the source-and-target. Then
\begin{enumerate}
  \item If a morphism $f\in Mor(\mathbf{LAS}/\underline{S})$ has property $\mathbf{P}^{strict}$ (resp. quasi-compact, quasi-separate, representable), then $\Phi_{log} f$ has property $\mathbf{P}^{strict}$ (resp. quasi-compact, quasi-separate, representable).
  \item If $f\in Mor(\mathbf{FLAS}/\underline{S})$ then $f$ has property $\mathbf{Q}$ (resp. quasi-compact, quasi-separate, representable) if and only if $\Phi_{log} f$ has property $\mathbf{Q}$.
\end{enumerate}
\end{Proposition}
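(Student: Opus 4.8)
The plan is to imitate, almost verbatim, the proof of Proposition \ref{Phi_premorph_Asp}, upgrading log schemes to log algebraic spaces, \'etale charts to the (smooth, resp. \'etale) charts of Definition \ref{DEF_morphisms_stack}, and the space-level Lemma \ref{Phi_rep_morph} to its stack counterpart Lemma \ref{Phi_Stackrep_morph}. The point is that each property in the statement can be read off a single chart square, and that $\Phi_{log}$ transports chart squares of $f$ to chart squares of $\Phi_{log} f$.

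For part (1), I would first produce a chart square
$$\xymatrix{ X \ar[r]^{f'} \ar[d] & Y \ar[d] \\ \X \ar[r]^f & \Y }$$
by choosing a chart $Y\to\Y$, then a chart $X\to Y\times_\Y\X$, so that the composite $X\to\X$ is again a chart and $f'$ is the induced morphism of log algebraic spaces. Applying $\Phi_{log}$ and using that it commutes with the fiber product $Y\times_\Y\X$ (Lemma \ref{AB_Phi_preserveproduct}) and carries strict surjective log smooth (log \'etale) covers to charts (Lemma \ref{Phi_Stackrep_morph}, Proposition \ref{Phialgstack}), I obtain a chart square of $\Phi_{log} f$ whose top arrow is $\Phi_{log} f'$. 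Now if $f$ has $\mathbf{P}^{strict}$ then so does $f'$, hence $\Phi_{log} f'$ has $\mathbf{P}^{strict}$ by the space-level Proposition \ref{Phi_premorph_Asp}, and reading this off the square gives $\mathbf{P}^{strict}$ for $\Phi_{log} f$. Quasi-compactness follows by taking the square cartesian over a quasi-compact fine log scheme $X$: the fiber $V$ is then quasi-compact, hence so is $V^{int}$ (Lemma \ref{integralpart_stack}), and therefore $\Phi_{log} f$. Quasi-separatedness reduces to the quasi-compact case via the identity $\Phi_{log}\Delta_{\X/\Y}=\Delta_{\Phi_{log}\X/\Phi_{log}\Y}$ (again Lemma \ref{AB_Phi_preserveproduct}), applied to the diagonal and, recursively, to the diagonal of the diagonal. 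Representability is precisely Lemma \ref{Phi_rep_morph}.

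For part (2) all objects are fine, so I would choose the chart square with $X,Y$ fine log schemes and the vertical map over $Y$ strict; then $\Phi_{log}$ is fully faithful on the relevant Hom-categories (Proposition \ref{AB_Phi}) and each chart descends back through $\Phi_{log}$ (Proposition \ref{logGC}), which promotes each implication of part (1) to an equivalence. Concretely: for $\mathbf{Q}$ one invokes the iff-half of Proposition \ref{Phi_premorph_Asp} on $f'$; for quasi-compactness one observes that a cartesian fiber $V$ over a strict quasi-compact base is already fine, so quasi-compactness of $\Phi_{log} V$ forces that of $f$; for quasi-separatedness one uses that $(\X\times_\Y\X)^{int}\to\X\times_\Y\X$ is a quasi-compact closed immersion (the remark after Lemma \ref{integralpart_stack}) to descend quasi-compactness of $\Delta_{\Phi_{log}\X/\Phi_{log}\Y}$ to $\Delta_{\X/\Y}$; and representability is the converse half of Lemma \ref{Phi_rep_morph}.

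The hard part is the structural claim that $\Phi_{log}$ sends a chart square of $f$ to a chart square of $\Phi_{log} f$: one must simultaneously control that $\Phi_{log}$ preserves the strict log smooth (log \'etale) covers defining the charts, that it commutes with the fiber product $Y\times_\Y\X$ entering the second chart, and that both are compatible with passage to the integral part $(-)^{int}$ (where $\Phi_{log} X = h_{X^{int}}$ already intervenes). Once this compatibility is secured, each property statement collapses to the already-established space case Proposition \ref{Phi_premorph_Asp}, supplemented only by the quasi-compactness of the closed immersion $X^{int}\to X$; the remaining bookkeeping---strictness of the chart over $Y$, and full faithfulness of $\Phi_{log}$ on fine objects---is routine.
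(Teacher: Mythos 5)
Your proposal is correct and follows essentially the same route as the paper: take a chart square of $f$, use Lemma \ref{Phi_Stackrep_morph} (together with the fact that $\Phi_{log}$ preserves the relevant fiber products) to see that its image is a chart square of $\Phi_{log}f$, and read each property off that square, handling quasi-separatedness via $\Phi_{log}\Delta_{\X/\Y}=\Delta_{\Phi_{log}\X/\Phi_{log}\Y}$ and, for the converse, the quasi-compact closed immersion $(\X\times_{\Y}\X)^{int}\rightarrow\X\times_{\Y}\X$. The only cosmetic difference is that you route the chart-level step through the space-level Proposition \ref{Phi_premorph_Asp} where the paper invokes stability under base change directly; the substance is identical.
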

\begin{proof}
\
(1) Suppose that we have diagrams
$$\xymatrix{
V \ar[r]^{f'} \ar[d]^v & U \ar[d]^{u}       & \Phi_{log} V \ar[r]^{\Phi_{log} f'} \ar[d] & \Phi_{log} U \ar[d]\\
\X \ar[r]^f & \Y                   & \Phi_{log} \X \ar[r]^{\Phi_{log}f} & \Phi_{log} \Y
}$$
By Lemma \ref{Phi_Stackrep_morph}, if the left diagram is a chart of $f$ (where $u$, $v$ are smooth charts), then the right one is a chart of $\Phi_{log} f$. If $f$ has property $\mathbf{P}^{strict}$, then $f'\in \mathbf{P}^{strict}$. Since $\mathbf{P}$ is stable under base change, $\Phi_{log} f'\in \mathbf{P}^{strict}$. Hence $\Phi_{log}$ preserves $\mathbf{P}^{strict}$.

Consider the case when $f$ is quasi-compact. Assume that the diagrams are cartesian and $U$ is a quasi-compact fine log scheme. then $V$ is quasi-compact. This implies that $V^{int}$ is quasi-compact and $\Phi f$ is quasi-compact.

For quasi-separateness, one notice that $\Phi_{log}\Delta_{\X/\Y}=\Delta_{\Phi_{log}\X/\Phi_{log}\Y}$ and $\Phi_{log}\Delta_{\Delta_{\X/\Y}}=\Delta_{\Delta_{\Phi_{log}\X/\Phi_{log}\Y}}$.

For representability, it's Lemma \ref{Phi_Stackrep_morph}.

(2) If $f\in Mor(\mathbf{FLAS}/\underline{S})$, we choose an strict log smooth chart of $f$. Then the righthand diagram is a chart of $\Phi_{log} f$ and $U$, $V$ are fine log schemes. Hence the result holds.

Consider the case when $\Phi_{log} f$ is quasi-compact. Assume that the diagrams are cartesian, where $U$ is a quasi-compact fine log scheme and $u$ is strict, then $V$ is fine and $\Phi_{log} V$ is quasi-compact. Hence $f$ is quasi-compact.

For quasi-separateness, one notice that $\Phi_{log}\Delta_{\X/\Y}=\Delta_{\Phi_{log}\X/\Phi_{log}\Y}$. If $\Delta_{\Phi_{log}\X/\Phi_{log}\Y}$ is quasi-compact, then $\X\rightarrow (\X\times_{\Y}\X)^{int}$ is quasi-compact. Since $(\X\times_{\Y}\X)^{int}\rightarrow\X\times_{\Y}\X$ is quasi-compact, $\Delta_{\X/\Y}$ is quasi-compact. The same argument shows that if $\Delta_{\Delta_{\Phi_{log}\X/\Phi_{log}\Y}}$ is quasi-separated, so is $\Delta_{\Delta_{\X/\Y}}$.

For representability, it's Lemma \ref{Phi_Stackrep_morph}.
\end{proof}
\begin{Proposition}\label{Phi_preproperty_Astack}
Let $\mathbf{P}$ (resp. $\mathbf{Q}$) be a property of (resp. fine log) schemes of a local nature for the (resp. strict log) smooth topology.
\begin{enumerate}
  \item If $\X \in \mathbf{LAS}_{\underline{S}}$ has property $\mathbf{P}$, so is $\Phi_{log}\X$.
  \item If $\X\in\mathbf{FLAS}/{\underline{S}}$, then $\X$ has property $\mathbf{Q}$ if and only if $\Phi_{log}\X$ has proerty $\mathbf{Q}$. $\X$ is DM (quasi-compact, quasi-separated) if and only if $\Phi_{log}\X$ DM (quasi-compact, quasi-separated).
\end{enumerate}
\end{Proposition}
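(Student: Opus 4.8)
The plan is to reduce every assertion to a statement about a single smooth chart and then to quote the corresponding results for log algebraic spaces, namely Proposition \ref{Phi_preproperty_Asp} and Proposition \ref{Phi_premorph_Astack}, together with the fact that $\Phi_{log}$ carries charts to charts. Two inputs make this possible. First, by definition a (fine) log algebraic stack has a given local-nature property exactly when one, hence every, of its smooth charts does. Second, by Lemma \ref{Phi_Stackrep_morph} and Proposition \ref{Phialgstack}, if $U\rightarrow\X$ is a strict smooth surjective chart by a log scheme then $\Phi_{log} U\rightarrow\Phi_{log}\X$ is a smooth chart of $\Phi_{log}\X$; and when $\X$ is fine, Proposition \ref{logGC} guarantees that conversely every smooth chart of $\Phi_{log}\X$ descends to one of $\X$.

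For the first assertion, I would fix $\X\in\mathbf{LAS}_{\underline{S}}$ with property $\mathbf{P}$ and a smooth chart $U\rightarrow\X$ by a log scheme $U$. Since $\mathbf{P}$ is of a local nature for the smooth topology, $U$ has $\mathbf{P}$. Now $\Phi_{log} U\rightarrow\Phi_{log}\X$ is a smooth chart, so $\Phi_{log}\X$ has $\mathbf{P}$ as soon as $\Phi_{log} U$ does, and the latter is precisely the content of the first part of Proposition \ref{Phi_preproperty_Asp} applied to $U$ regarded as a log algebraic space. In particular this already incorporates the passage to the integral part, recalling $\Phi_{log} U=h_{U^{int}}$ from Lemma \ref{Phi_presevescheme}.

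For the second assertion, I would take $\X\in\mathbf{FLAS}_{\underline{S}}$ and a fine smooth chart $U\rightarrow\X$. Here $U$ is a fine log scheme with $\Phi_{log} U\simeq U$, so by the second part of Proposition \ref{Phi_preproperty_Asp} the chart $U$ has $\mathbf{Q}$ if and only if $\Phi_{log} U$ does; since charts now correspond in both directions, $\X$ has $\mathbf{Q}$ if and only if $\Phi_{log}\X$ has $\mathbf{Q}$. The Deligne--Mumford statement is the assertion that the smooth chart may be upgraded to an \'etale one: if $\underline{\X}$ is DM one chooses $U$ with $\underline{U}\rightarrow\underline{\X}$ \'etale, whence $U\rightarrow\X$ is strict log \'etale and $\Phi_{log} U\rightarrow\Phi_{log}\X$ is a strict log \'etale chart, so $\Phi_{log}\X$ is DM in the sense of Definition \ref{DEF_Alglogstack}; conversely such a chart of $\Phi_{log}\X$ descends to one of $\X$. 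Quasi-compactness is tested on a quasi-compact chart, which again corresponds under $\Phi_{log}$; and quasi-separatedness is read off the diagonal, using $\Phi_{log}\Delta_{\X}=\Delta_{\Phi_{log}\X}$ and $\Phi_{log}\Delta_{\Delta_{\X}}=\Delta_{\Delta_{\Phi_{log}\X}}$ together with the quasi-compactness and quasi-separatedness clauses of Proposition \ref{Phi_premorph_Astack}.

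The point that demands genuine care, and which I expect to be the only real obstacle, is the non-fine case of the first assertion: there the chart of $\Phi_{log}\X$ is the integral part $U^{int}$ and not $U$ itself, which is exactly why that assertion is only one-directional and why the comparison of $U$ with $U^{int}$ must be delegated to Proposition \ref{Phi_preproperty_Asp}. The remaining verifications should be a routine transcription of the algebraic-log-space arguments, the only change being that charts are now strict smooth rather than strict \'etale, which is immaterial to all the local-nature reductions.
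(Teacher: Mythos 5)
Your proposal is correct and follows essentially the same route as the paper, which proves this proposition in one line by observing that $\Phi_{log}$ carries smooth charts to smooth charts and that, in the fine case, charts correspond in both directions (Lemma \ref{Phi_Stackrep_morph}, Proposition \ref{logGC}); your write-up simply fleshes out that reduction, delegating the $U$ versus $U^{int}$ comparison in the non-fine case to Proposition \ref{Phi_preproperty_Asp} and the diagonal conditions to Proposition \ref{Phi_premorph_Astack}, exactly as the paper's terse proofs intend. The point you single out as delicate --- that part (1) is only one-directional because the chart of $\Phi_{log}\X$ is $U^{int}$ rather than $U$ --- is indeed the right one to watch.
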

\begin{proof}
This result follows from the fact that if $U\rightarrow\X$ (where $U$ is log scheme) is a chart of $\X$, then $\Phi_{log} U\rightarrow\Phi_{log}\X$ is also a chart. If $\X\in\mathbf{FLAS}/\underline{S}$, then $U\rightarrow\X$ is a chart if and only if $\Phi_{log} U\rightarrow\Phi_{log}\X$ is a chart.
\end{proof}

In the end of this section we prove that an algebraic log stack always has enough compatible minimal objects.

\begin{Lemma}\label{factorbystrict_Stack}
$\X$ is a stack over $\mathbf{Flog}_S$, with an Asp-representable, strict, surjective, flat and locally of finite
  presentation morphism $U\rightarrow\X$ where $U$ is an algebraic log space. $f:T\rightarrow\X$ is a morphism from a fine log scheme $T$ to $\X$ over $\mathbf{Flog}_S$. Then $f$ factors through $\xymatrix{T\ar[r]^g & T_0 \ar[r]^h & \X}$ where $T_0$ is a fine log scheme, $\underline{g}=id$, $h$ is strict (i.e. $T_0\times_{\X}U\rightarrow U$ is strict). The factorization is unique in the following sense: if there is another factorization $\xymatrix{f':T\ar[r]^{g'} & T'_0 \ar[r]^{h'} & \X}$ with 2-isomorphism $\alpha:f\simeq f'$ where $\underline{g'}=id$, $h'$ is strict, then there is a unique pair $(u, \eta)$ where $u$ is a 1-automorphism $u:T_0\rightarrow T'_0$ with $g'=ug$, and $\beta$ is a 2-isomorphism $\beta:h\simeq h'u$ s.t. $g^\ast\beta=\alpha$. We call such factorization a strict factorization.
\end{Lemma}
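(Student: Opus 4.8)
The plan is to copy the proof of Lemma \ref{factorbystrict_Asp} almost verbatim, making two substitutions: the fine-log-scheme cover is replaced by the algebraic-log-space cover $U\to\X$, and the strict factorization of log schemes is replaced by the strict factorization of algebraic log spaces (Lemma \ref{factorbystrict_Aspace}). First I would form $U_T:=T\times_\X U$. Since $U\to\X$ is Asp-representable and $T$ is a fine log scheme, $U_T$ is an algebraic log space by Corollary \ref{Asprep_2}, and $U_T\to T$ inherits strictness, surjectivity, flatness and local finite presentation by base change.

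For existence I would run the same diagram as in Lemma \ref{factorbystrict_Asp}, now entirely in algebraic log spaces: let $V$ (resp.\ $R$) be the strict factorization, via Lemma \ref{factorbystrict_Aspace}, of the projection $U_T\to U$ (resp.\ $U_T\times_T U_T\to U\times_\X U$). Then $R\rightrightarrows V$ is a groupoid that is strict over $U\times_\X U\rightrightarrows U$, and the maps $U_T\to V$, $U_T\times_T U_T\to R$ are the identity on underlying spaces. Consequently $\underline R\rightrightarrows\underline V$ is identified with $\underline{U_T\times_T U_T}\rightrightarrows\underline{U_T}$, whose effective quotient is $\underline T$; this forces $\underline{T_0}=\underline T$ and $\underline g=\mathrm{id}$. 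Because $\underline V\to\underline{T_0}$ is flat and locally of finite presentation while the descent datum $R\rightrightarrows V$ is strict, the fine log structure of $V$ descends to a fine log scheme $T_0$ with $V\to T_0$ strict, flat, locally of finite presentation (Theorem \ref{Olsson_descent_fine_structure}, using also that fine log structures form an fppf stack, Proposition \ref{Setting_of_log}(2)); here $\underline{T_0}=\underline T$ is a genuine scheme, so Olsson's theorem applies directly.

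The morphism $h:T_0\to\X$ is then obtained by gluing the descent datum $R\rightrightarrows V\to\X$. This is the one genuinely new point compared with Lemma \ref{factorbystrict_Asp}: since $V$ and $R$ are algebraic log spaces rather than fine log schemes, the stack property of $\X$ over $\mathbf{Flog}_S$ does not apply directly, and I would instead invoke Lemma \ref{descent_Asptostack}, which says precisely that $\X_{Asp}$ is a stack over $\mathbf{FLAlg}_{S,fppf}$, so that morphisms from algebraic log spaces to $\X$ satisfy strict-fppf descent. To check $h$ is strict I would verify $V\simeq T_0\times_\X U$ as in Lemma \ref{factorbystrict_Asp}: the canonical map $i:V\to T_0\times_\X U$ covers the identity on underlying spaces and lies over the strict morphism $V\to U$, hence is an isomorphism, so $T_0\times_\X U\to U$ is strict, i.e.\ $h$ is strict.

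For uniqueness I would repeat the argument of Lemma \ref{factorbystrict_Asp}: a second strict factorization through $T_0'$, pulled back along $T_0'\to\X$, gives strict factorizations $R'\rightrightarrows V'$ of the same two projections; the uniqueness clause of Lemma \ref{factorbystrict_Aspace} then produces unique compatible isomorphisms $V\simeq V'$ and $R\simeq R'$, which descend to the unique $u:T_0\simeq T_0'$ with $g'=ug$ and, after a parallel descent of $2$-isomorphisms, the unique $\beta:h\simeq h'u$ with $g^\ast\beta=\alpha$; uniqueness of the pair $(u,\beta)$ follows by a diagram chase. Apart from the gluing step above, the main thing to watch is that every object produced (especially $V$, $R$, and the descent data) really lies in the fine category, which is guaranteed by Corollary \ref{Asprep_2} together with the fineness of the strict factorization in Lemma \ref{factorbystrict_Aspace}.
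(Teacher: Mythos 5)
Your proposal is correct and follows essentially the same route as the paper's own proof: the same three-row diagram, strict factorizations of algebraic log spaces via Lemma \ref{factorbystrict_Aspace}, descent of the fine log structure via Theorem \ref{Olsson_descent_fine_structure}, and gluing $h$ via Lemma \ref{descent_Asptostack} (which the paper likewise lists as the needed new ingredient), followed by the identical strictness check $V\simeq T_0\times_{\X}U$ and the identical uniqueness argument.
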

\begin{proof}
First we describe some techniques for the proof:
\begin{enumerate}
  \item For a morphism $f:X\rightarrow Y$ between algebraic log spaces, we have a strict factorization as stated in \ref{factorbystrict_Aspace}. In fact, by theorem \ref{Phi_corres_Asp}, we will not distinguish fine log algebraic spaces and algebraic log spaces in the proof.
  \item Descent property of fine log structure in algebraic space (Remark below theorem \ref{Olsson_descent_fine_structure}).
  \item We will need descent techniques of morphisms from algebraic log spaces to stacks, as stated in Lemma \ref{descent_Asptostack}.
\end{enumerate}
\begin{description}
  \item[Existence] Consider the solid diagram
  $$\xymatrix{
  U_T\times_TU_T \ar[r]^u \ar@<1ex>[d] \ar@<-1ex>[d] & R \ar[r] \ar@<1ex>[d] \ar@<-1ex>[d] & U\times_\X U \ar@<1ex>[d] \ar@<-1ex>[d] \\
  U_T \ar[r]^v \ar[d] & V \ar[r] \ar@{-->}[d] & U \ar[d] \\
  T \ar@{-->}[r] \ar@/_/[rr] & T_0 \ar@{-->}[r]^h & \X
  }$$
  where the left vertical arrows come from the base change of the right vertical arrows. By Lemma \ref{Asprep}, $U\times_\X U$ is an algebraic log space. The first and second horizontal arrows are strict factorizations of algebraic log spaces. Since $\underline{u}=id$, $\underline{v}=id$, we have that $\xymatrix{\underline{R} \ar@<1ex>[r] \ar@<-1ex>[r] &\underline{V}}$ is effective with quotient $id:\underline{T}\rightarrow \underline{T_0}$. Moreover, since $\underline{V}\rightarrow \underline{T_0}$ is a flat, locally of finite presentation morphism and $\xymatrix{R \ar@<1ex>[r] \ar@<-1ex>[r] &V}$ are strict, we can descent the log structure on $V$ to $\underline{T_0}$ (\ref{Olsson_descent_fine_structure}). Denote this decent log scheme $T_0$, then $V\rightarrow T_0$ is strict, flat, locally of finite presentation. Since $\X$ is a stack, the descent data of morphisms $\xymatrix{R \ar@<1ex>[r] \ar@<-1ex>[r] &V \ar[r] & \X}$ gives $h:T_0\rightarrow \X$ fitting in the diagram.

  Now we prove $V\simeq T_0\times_{\X}U$ in the diagram. As a result, $h$ is strict. Consider the diagram:
  $$\xymatrix{
   & V \ar[d]^{i} \ar[rd] & \\
  U_T \ar[r]^{f} \ar[ru]^g & T_0\times_{\X}U \ar[r] & U
  }$$
  where $\underline{f}=\underline{g}=id$. Since $V\rightarrow U$ is strict, $i$ is an isomorphism, hence $V\simeq T_0\times_{\X}U$.

  This gives a strict factorization of $T\rightarrow\X$.

  \item[Uniqueness] Using the same diagram of another strict factorization:
  $$\xymatrix{
  U_T\times_TU_T \ar[r]^u \ar@<1ex>[d] \ar@<-1ex>[d] & R' \ar[r] \ar@<1ex>[d] \ar@<-1ex>[d] & U\times_\X U \ar@<1ex>[d] \ar@<-1ex>[d] \\
  U_T \ar[r]^v \ar[d] & V' \ar[r] \ar[d] & U \ar[d] \\
  T \ar[r]^{g'} \ar@/_/[rr]_{f'} & T'_0 \ar[r]^{h'} & \X
  }$$
  with a 2-isomorphism $\alpha:f\simeq f'$, $R'$ and $V'$ come from the pullbacks through $T'_0\rightarrow\X$. Then $R'$, $V'$ give the strict factorizations. By the uniqueness of the strict factorization of morphisms between algebraic log spaces, there are unique isomorphisms $r:R\simeq R'$ and $v:V\simeq V'$ compatible to the diagrams. We can descent them to an isomorphism $u:T_0\simeq T'_0$ compatible to the diagrams. Doing the same descent procedure, we get a 2-isomorphism $\beta:h\simeq h'u$, s.t. $g'^\ast\beta=\alpha$. The uniqueness of $u$ and $\beta$ comes from chasing the diagram.
\end{description}
\end{proof}

\begin{Theorem}\label{alglogstackminimal}
Suppose that $\X$ is a stack over $\mathbf{Flog}_S$, with a Asp-representable, strict, surjective, flat and locally of finite
  presentation morphism $U\rightarrow\X$ where $U$ is an algebraic log space. Then $\X$ has enough compatible minimal objects.
\end{Theorem}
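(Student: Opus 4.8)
The plan is to imitate the proof of Theorem \ref{alglogstackminimalweak} essentially verbatim, replacing the strict factorization of Lemma \ref{factorbystrict_Asp} (available when the chart is a fine log scheme and $U\to\X$ is representable) by the strict factorization of Lemma \ref{factorbystrict_Stack} (available in the present situation, where $U$ is only an algebraic log space and $U\to\X$ is Asp-representable). Concretely, I would let $\X_m$ be the subcategory of $\X$ whose objects are the \emph{strict} ones, i.e.\ those $\xi:T\to\X$ with $T$ a fine log scheme for which the base change $T\times_\X U\to U$ along the given chart is strict, and then verify the two conditions \textbf{B1} and \textbf{B2} of Lemma \ref{GC} for $F:\X\to\mathbf{Flog}_S$.

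For \textbf{B1} (enough minimal objects) I would use the existence half of Lemma \ref{factorbystrict_Stack}: any $\xi:T\to\X$ factors as $T\xrightarrow{g}T_0\xrightarrow{h}\X$ with $\underline g=\mathrm{id}$ and $h$ strict, so that $T_0\in\X_m$ and $g$ supplies a morphism with $\underline{Fg}=\mathrm{id}$. To see that each object of $\X_m$ is minimal in the sense of Definition \ref{minimalob}, I would run the same diagram chase as in Step~1 of Theorem \ref{alglogstackminimalweak}: given a test diagram whose apex is a strict object $\xi_2$, taking a strict factorization of $\xi_2$ exhibits two strict factorizations of the relevant composite, and the uniqueness clause of Lemma \ref{factorbystrict_Stack} produces the unique completion together with the compatible $2$-isomorphism. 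For \textbf{B2} (compatibility) I would verify, for a morphism $i:w\to z$ lying over $f=Fi:T_1\to T_2$ with $z\in\X_m$, that the source $w$ is strict if and only if $f$ is strict (the latter being the meaning of $Fi$ cartesian for the fibration $\mathbf{Flog}_S\to\mathbf{Sch}_{\underline S}$). This is precisely the diagram chase of Step~3 of Theorem \ref{alglogstackminimalweak}: base-changing along the chart $U\to\X$, one reduces to strictness of $f_U$, using that $f$ is strict if and only if $f_U$ is strict; the only adaptation is that the top row now consists of algebraic log spaces rather than fine log schemes, which is harmless since $T_i\times_\X U$ is again an algebraic log space by Corollary \ref{Asprep_2}.

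The point of the argument is that all of the genuine technical content has been pushed into Lemma \ref{factorbystrict_Stack}, so the main obstacle is not located in the present theorem but in that lemma, which I would regard as already established. The decisive new input there, compared with Lemma \ref{factorbystrict_Asp}, is that the chart is only Asp-representable: one must know that $U\times_\X U$ is again an algebraic log space, which is exactly Lemma \ref{Asprep} (via Corollary \ref{Asprep_2}). Granting this, the groupoid $\underline R\rightrightarrows\underline V$ obtained by base change has an effective quotient, the strict log structure on $V$ descends along the flat, locally finitely presented, strict morphism $\underline V\to\underline{T_0}$ by Theorem \ref{Olsson_descent_fine_structure}, and the stack property of $\X$ reassembles the descent datum $R\rightrightarrows V\to\X$ into $h:T_0\to\X$. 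Once that factorization and its uniqueness are in hand, the verification of \textbf{B1} and \textbf{B2} above is purely formal, and the theorem follows from Lemma \ref{GC}.
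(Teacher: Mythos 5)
Your proposal matches the paper's own proof: the paper likewise takes $\X_m$ to be the strict objects, invokes Lemma \ref{factorbystrict_Stack} for the existence and uniqueness of strict factorizations, and repeats the verification of \textbf{B1} and \textbf{B2} from Theorem \ref{alglogstackminimalweak} verbatim. The approach and all key inputs (including the role of Lemma \ref{Asprep} inside Lemma \ref{factorbystrict_Stack}) are essentially identical.
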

\begin{proof}
Let the subcategory $\X_m$ of $\X$ consist of objects corresponding to a strict morphisms $T\rightarrow X$ (i.e. $T\times_{\X}U\rightarrow U$ is strict) where $T$ is a fine log scheme. Then $\X_m$ form a compatible system of minimal objects due to Lemma \ref{factorbystrict_Stack}, by the same argument as in theorem \ref{alglogstackminimalweak}.
\end{proof}

As a corollary, all algebraic log stacks have enough compatible minimal objects. Hence we get the representation theorem of algebraic log stacks:
\begin{Theorem}\label{Phi_corres_Astack}
$\Phi_{log}$ sends $\mathbf{LAS}_{\underline{S}}$ to $\mathbf{ALS}_S$, and restricts on $\mathbf{FLAS}_{\underline{S}}$ to be a strict 2-equivalence:
$$\mathbf{FLAS}_{\underline{S}}\rightarrow\mathbf{ALS}_S$$
$\Phi_{log} \X\simeq\Phi_{log} \Y$ if and only if $\X^{int}\simeq \Y^{int}$.
\end{Theorem}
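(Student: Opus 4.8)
The plan is to mirror exactly the proof of Theorem~\ref{Phi_corres_Asp} for algebraic log spaces, substituting each ingredient by its stack-level counterpart. The statement has three parts: that $\Phi_{log}$ carries $\mathbf{LAS}_{\underline{S}}$ into $\mathbf{ALS}_S$, that the restriction to $\mathbf{FLAS}_{\underline{S}}$ is a strict $2$-equivalence, and the criterion for when two images are equivalent. First I would dispose of the two easy inclusions by invoking Proposition~\ref{Phialgstack}: it already tells us that $\Phi_{log}$ sends $\mathbf{LAS}_{\underline{S}}$ to $\mathbf{ALS}_S$ and that on $\mathbf{FLAS}_{\underline{S}}$ the functor is strictly fully faithful, the full faithfulness being ultimately inherited from the strict full faithfulness of $\Phi_{log}$ on $\mathbf{FLogCFG}/\mathbf{Sch}_{\underline{S}}$ (Proposition~\ref{AB_Phi}). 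Thus the only substantive point left for the $2$-equivalence is essential surjectivity onto $\mathbf{ALS}_S$.

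For essential surjectivity I would take an arbitrary algebraic log stack $X$. By definition it admits a strict, surjective, log smooth chart $i:U\rightarrow X$ with $U$ a fine log scheme; since a fine log scheme is in particular an algebraic log space and log smooth morphisms are flat and locally of finite presentation, the hypotheses of Theorem~\ref{alglogstackminimal} are satisfied, so $X$ has enough compatible minimal objects, i.e. $X\in\mathbf{Stack}^B/\mathbf{Flog}_S$. Corollary~\ref{logGC} then produces a fine log stack $\widetilde X\in\mathbf{FLogStack}/\mathbf{Sch}_{\underline{S}}$ with $\Phi_{log}\widetilde X\simeq X$. What still needs checking is that $\widetilde X$ genuinely belongs to $\mathbf{FLAS}_{\underline{S}}$, that is, that its underlying stack over $\mathbf{Sch}_{\underline{S}}$ is algebraic; this is exactly the converse implication in Proposition~\ref{Phialgstack}, which says that for $\widetilde X\in\mathbf{FLogCFG}/\mathbf{Sch}_{\underline{S}}$, the image $\Phi_{log}\widetilde X$ being an algebraic log stack forces $\widetilde X$ to be a log algebraic stack. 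Combining this with the strict full faithfulness above yields that $\mathbf{FLAS}_{\underline{S}}\rightarrow\mathbf{ALS}_S$ is a strict $2$-equivalence.

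Finally, for the criterion $\Phi_{log}\X\simeq\Phi_{log}\Y$ if and only if $\X^{int}\simeq\Y^{int}$, the key observation is that $\Phi_{log}\X=\Phi_{log}\X^{int}$: by the universal property of the integral part supplied by Lemma~\ref{integralpart_stack}, every morphism to $\X$ from a fine log scheme factors uniquely through $\X^{int}$, so the two stacks have identical $T$-points for $T\in\mathbf{Flog}_S$ and define the same object of $\mathbf{CFG}/\mathbf{Flog}_S$. Consequently $\Phi_{log}\X\simeq\Phi_{log}\Y$ is equivalent to $\Phi_{log}\X^{int}\simeq\Phi_{log}\Y^{int}$, and since $\X^{int},\Y^{int}\in\mathbf{FLAS}_{\underline{S}}$, where $\Phi_{log}$ is strictly fully faithful, this is in turn equivalent to $\X^{int}\simeq\Y^{int}$.

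The conceptual heart of the argument is entirely contained in Theorem~\ref{alglogstackminimal}, the existence of enough compatible minimal objects, which has already been established; given it, the present theorem is a bookkeeping assembly of earlier results. The one place that demands genuine care is the verification that the Gillam-theoretic preimage $\widetilde X$ is algebraic rather than merely a log stack, and this is precisely where the ``only if'' direction of Proposition~\ref{Phialgstack} is indispensable; I would make sure to apply it to $\widetilde X$ only after confirming $\widetilde X\in\mathbf{FLogCFG}/\mathbf{Sch}_{\underline{S}}$, since the converse there is stated only for fine objects.
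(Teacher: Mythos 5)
Your proposal is correct and follows essentially the same route as the paper's own proof: Proposition \ref{Phialgstack} for the image and strict full faithfulness, Theorem \ref{alglogstackminimal} together with the Gillam correspondence (Lemma \ref{GC} / Corollary \ref{logGC}) for essential surjectivity, and Lemma \ref{integralpart_stack} for the statement about integral parts. The extra care you take in checking that a strict log smooth chart is flat and locally of finite presentation, and that the Gillam preimage is genuinely algebraic via the converse direction of Proposition \ref{Phialgstack}, is exactly what the paper leaves implicit.
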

\begin{proof}
By proposition \ref{Phialgstack}, $\Phi_{log}$ sends $\mathbf{LAS}_{\underline{S}}$ to $\mathbf{ALS}_S$, and restricts to strict fully faithful functor $\mathbf{FLAS}_{\underline{S}}\rightarrow\mathbf{ALS}_S$. By Theorem \ref{alglogstackminimal} and Lemma \ref{GC}, the essential images of $\Phi_{log}$ are $\mathbf{ALS}_S$. Hence $$\mathbf{FLAS}_{\underline{S}}\rightarrow\mathbf{ALS}_S$$ is a strict 2-equivalence.
The last part follows directly from Lemma \ref{integralpart_stack}.
\end{proof}

\section{Applications}
Due to the correspondence $\ref{Phi_corres_Astack}$, we get the results of algebraic log stacks from the known results in algebraic stacks. In this section we list some of the fundamental ones:
\subsection{Bootstrapping algebraic log stacks}
\begin{Theorem}\label{fppfrepofstack}
Let S be a fine log scheme. Let $F:\X\rightarrow \Y$ be a 1-morphism of stacks over $\mathbf{Flog}_{S,fppf}$ . If
\begin{enumerate}
  \item $\X$ is an algebraic log space,
  \item $F$ is Asp-representable, strict, surjective, flat and locally of finite
  presentation,
\end{enumerate}
then $\Y$ is an algebraic log stack.
\end{Theorem}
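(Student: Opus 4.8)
The plan is to reduce the statement to the classical bootstrapping theorem for algebraic stacks via the Gillam functor $\Phi_{log}$, exploiting the fact that the hypotheses on $F$ are exactly those demanded by Theorem \ref{alglogstackminimal}. First I would observe that, since $\X$ is an algebraic log space and $F:\X\rightarrow\Y$ is Asp-representable, strict, surjective, flat and locally of finite presentation, Theorem \ref{alglogstackminimal} applies verbatim (with $U=\X$): it yields that the stack $\Y$ over $\mathbf{Flog}_S$ has enough compatible minimal objects. By Corollary \ref{logGC} (equivalently Lemma \ref{GC}), $\Y$ then lies in the essential image of $\Phi_{log}$, so there is a fine log stack $\widetilde{\Y}\in\mathbf{FLogStack}/\mathbf{Sch}_{\underline{S}}$ with $\Phi_{log}\widetilde{\Y}\simeq\Y$. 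It remains to prove that $\widetilde{\Y}$ is a log algebraic stack, for then Proposition \ref{Phialgstack} gives that $\Y\simeq\Phi_{log}\widetilde{\Y}$ is an algebraic log stack, as desired.

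To transport $F$ across the correspondence, I would use Theorem \ref{Phi_corres_Asp} to write $\X\simeq\Phi_{log}\X'$ for a (unique up to equivalence) fine log algebraic space $\X'$, and then strict full faithfulness of $\Phi_{log}$ on $\mathbf{FLogCFG}/\mathbf{Sch}_{\underline{S}}$ (Proposition \ref{AB_Phi}) to obtain a morphism $F':\X'\rightarrow\widetilde{\Y}$ with $\Phi_{log}F'\simeq F$. Since $F'$ is a morphism in $\mathbf{FLogCFG}/\mathbf{Sch}_{\underline{S}}$, Lemma \ref{Phi_Stackrep_morph} lets me pull the properties of $F$ back to $F'$: from $F$ Asp-representable and of type $\mathbf{P}^{strict}$ for $\mathbf{P}\in\{\text{surjective},\text{flat},\text{locally of finite presentation}\}$ I deduce that $F'$ is Asp-representable, strict, surjective, flat and locally of finite presentation as well.

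Now strictness of $F'$ together with Asp-representability means, by Definition \ref{DEF_ab_rep_Asp}, that the underlying morphism $\underline{F'}:\underline{\X'}\rightarrow\underline{\widetilde{\Y}}$ is representable by algebraic spaces, and (being strict) surjective, flat and locally of finite presentation, with $\underline{\X'}$ an algebraic space. At this point the data is exactly the hypothesis of the classical bootstrapping theorem for algebraic stacks (\cite{dejong}), which yields that $\underline{\widetilde{\Y}}$ is an algebraic stack. Equipped with its fine log structure, $\widetilde{\Y}$ is therefore a log algebraic stack, and applying $\Phi_{log}$ (Proposition \ref{Phialgstack}) closes the argument.

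I expect the genuine content to sit entirely in the invocation of Theorem \ref{alglogstackminimal}; once enough compatible minimal objects are in hand the remainder is formal bookkeeping through the established equivalence. The one point requiring care is the property transfer: Lemma \ref{Phi_Stackrep_morph} is stated for morphisms in $\mathbf{FLogCFG}/\mathbf{Sch}_{\underline{S}}$ and not for morphisms between fine log algebraic stacks, so I must apply it at the level of groupoid fibrations with fine log structure, where $\widetilde{\Y}$ lives a priori before its algebraicity is known, rather than prematurely appealing to Proposition \ref{Phi_premorph_Astack}, whose hypotheses presuppose that the target is already a fine log algebraic stack.
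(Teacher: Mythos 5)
Your proposal is correct and follows essentially the same route as the paper's proof: Theorem \ref{alglogstackminimal} applied to $F:\X\rightarrow\Y$ gives enough compatible minimal objects, the Gillam correspondence descends $F$ to a morphism $F':\X'\rightarrow\widetilde{\Y}$ in $\mathbf{FLogCFG}/\mathbf{Sch}_{\underline{S}}$, the properties transfer to $\underline{F'}$, and the classical bootstrapping theorem for algebraic stacks finishes the argument. Your care in invoking Lemma \ref{Phi_Stackrep_morph} rather than Proposition \ref{Phi_premorph_Astack} for the property transfer is in fact slightly more scrupulous than the paper itself, which cites the latter even though its statement is for morphisms in $Mor(\mathbf{FLAS}/\underline{S})$ and hence presupposes the algebraicity of the target that is being established.
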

\begin{proof}
By theorem \ref{alglogstackminimal}, $\Y$ has enough minimal objects. So there is $\Y'\in \mathbf{FLogCFG}_{\underline{S}}$ satisfying that $\Phi_{log}\Y'=\Y$. By theorem \ref{Phi_corres_Asp} and Lemma \ref{AB_Phi}, the morphism $f$ descents to a $f':\X'\rightarrow\Y'$ in $\mathbf{FLogCFG}_{\underline{S}}$, $\X=\Phi_{log}\X'$, $f=\Phi_{log} f'$. By Proposition \ref{Phi_premorph_Astack}, $f'$ is Asp-representable, strict, surjective, flat and locally of finite
  presentation, which implies that $\underline{\Y'}$ is an algebraic stack (\cite{dejong} Theorem 70.16.1). Hence $\Y'\in \mathbf{FLAS}_S$, and $\Y$ is an algebraic log stack.
\end{proof}

\subsection{Groupoid presentation of algebraic log stack}\

For algebraic log stacks, there is also the notion of presentation by groupoid, a groupoid in algebraic log spaces is $(U; R; s; t; c)$ where $U$ and $R$ are algebraic log spaces, $s$, $t$, $c$ are strict, and $(\underline{U}; \underline{R}; \underline{s}; \underline{t}; \underline{c})$ (see Definition \ref{DEF_underlinespace-AlgL}) is groupoid in algebraic space.

Let $(U; R; s; t; c)$ be a groupoid in log algebraic spaces, $\pi:[\underline{U}/ \underline{R}]\rightarrow\mathbf{Sch}_{\underline{S}}$ is the associate stack. Since $\mathbf{Flog}_S$ is stack over $\mathbf{Sch}_{\underline{S}}$, $\pi$ factor through $\mathbf{Flog}_S$. This gives $[\underline{U}/ \underline{R}]$ a fine log structure. We denote this stack with fine log structure $[U/R]$. By abstract nonsense we have $\Phi_{log}[U/R]=\Phi_{log} U/\Phi_{log} R$.

\begin{Theorem}\label{stackgroupiod}
Let $S$ be a fine log scheme and $X$ be an algebraic log stack over $S$. $f : U\rightarrow X$ is a surjective strict log smooth morphism where $U$ is an algebraic log space over $S$. Let $(U; R; s; t; c)$ be the associated groupoid in log algebraic spaces and $f_{can}:[U/R]\rightarrow X$ be the associated map. Then
\begin{enumerate}
  \item the morphisms $s$, $t$ are strict log smooth, and
  \item the 1-morphism $f_{can}:[U/R]\rightarrow X$ is an equivalence.
\end{enumerate}
\end{Theorem}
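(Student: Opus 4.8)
The plan is to transport the whole statement across the strict $2$-equivalence $\Phi_{log}\colon\mathbf{FLAS}_S\to\mathbf{ALS}_S$ of Theorem \ref{Phi_corres_Astack}, thereby reducing both assertions to the classical theorem on presentation of an algebraic stack by a smooth groupoid (\cite{dejong},\cite{Vistoli}) applied to the underlying algebraic stack of a fine log algebraic stack. The point is that $\Phi_{log}$ identifies $[U/R]\to X$ with a quotient presentation living in the much more concrete world of stacks-with-log-structure, where smoothness, surjectivity and the groupoid relation are the usual notions on the underlying stack.

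First I would lift the data to $\mathbf{FLAS}_S$. Since $X$ is an algebraic log stack and $U$ an algebraic log space, Theorem \ref{Phi_corres_Astack} gives a fine log algebraic stack $\mathcal{X}'$ and a fine log algebraic space $U'$ with $\Phi_{log}\mathcal{X}'\simeq X$ and $\Phi_{log}U'\simeq U$; as $\Phi_{log}$ is strictly fully faithful, $f=\Phi_{log}f'$ for a unique $f'\colon U'\to\mathcal{X}'$. Because $X$ has Asp-representable diagonal, $f$ is Asp-representable (the standard diagonal argument, cf. Corollary \ref{criterion_Asprep}), so $R=U\times_X U$ is indeed an algebraic log space by Corollary \ref{Asprep_2}. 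The ``only if'' directions of Lemma \ref{Phi_Stackrep_morph} and Proposition \ref{Phi_premorph_Astack} then show that $f'$ is Asp-representable, strict, surjective, flat and log smooth; here I use that for a strict morphism log smoothness coincides with smoothness of the underlying morphism, so that strict log smoothness is exactly the property $\mathbf{P}^{strict}$ with $\mathbf{P}=$ smooth treated by those results. Since $f'$ is strict, its underlying morphism $\underline{f'}\colon\underline{U'}\to\underline{\mathcal{X}'}$ is a smooth, surjective, representable presentation of the algebraic stack $\underline{\mathcal{X}'}$.

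Next I would form the groupoid inside $\mathbf{FLAS}_S$. Set $R'=U'\times_{\mathcal{X}'}U'$ with projections $s',t'$ and composition $c'$. As $f'$ is strict it is in particular integral (its $F^\dagger$ is an isomorphism), so by Lemma \ref{fiberpro_int} this fine fiber product agrees with the naive one, $\underline{R'}=\underline{U'}\times_{\underline{\mathcal{X}'}}\underline{U'}$ carrying the pulled-back log structure, and $s',t'$ are strict; being base changes of the strict log smooth $f'$ they are strict log smooth, which is assertion (1) on the $\mathbf{FLAS}_S$ side. The classical presentation theorem gives that the canonical map $[\underline{U'}/\underline{R'}]\to\underline{\mathcal{X}'}$ is an equivalence of algebraic stacks, and since the log structures on $U'$ and $R'$ are all pulled back from $\mathcal{X}'$ this upgrades to an equivalence $[U'/R']\simeq\mathcal{X}'$ in $\mathbf{FLAS}_S$. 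Finally I would apply $\Phi_{log}$: as $f'$ is integral it preserves the $2$-Cartesian square defining $R'$ (Lemma \ref{AB_Phi_preserveproduct}, Corollary \ref{logGC}), whence $\Phi_{log}R'=R$ and, by Proposition \ref{Phi_premorph_Astack}, $s=\Phi_{log}s'$ and $t=\Phi_{log}t'$ are strict log smooth, giving (1); and since $\Phi_{log}[U'/R']=[\Phi_{log}U'/\Phi_{log}R']=[U/R]$, applying $\Phi_{log}$ to $[U'/R']\simeq\mathcal{X}'$ identifies the canonical map with $f_{can}\colon[U/R]\simeq X$, giving (2).

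The substantive input is thus the classical statement for algebraic stacks; the step requiring care is the bookkeeping that $\Phi_{log}$ is compatible with every construction in play — fiber products, the passage to the associated groupoid, and the quotient stack — and in particular that strictness (equivalently integrality) of $f'$ forces the fine log fiber product $U'\times_{\mathcal{X}'}U'$ to coincide with the ordinary one, so that $(\underline{U'};\underline{R'};\underline{s'};\underline{t'};\underline{c'})$ is precisely the smooth groupoid to which the classical theorem applies and the log structures descend along the equivalence without any modification.
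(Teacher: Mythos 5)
Your proposal is correct and follows essentially the same route as the paper: the paper's proof is a one-line reduction via the $2$-equivalence of Theorem \ref{Phi_corres_Astack} to the classical presentation theorem in the Stacks Project, combined with descent of fine log structures (the remark below Theorem \ref{Olsson_descent_fine_structure}). You have simply spelled out the bookkeeping — strictness implying integrality so the fine fiber product agrees with the underlying one, and strict log smoothness being $\mathbf{P}^{strict}$ for $\mathbf{P}=$ smooth — that the paper leaves implicit.
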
\noindent
\textbf{Remark:} If the morphism $f:U\rightarrow X$ is only assumed surjective, strict, flat and locally of finite presentation, then it is still the case that $f_{can}:[U/R]\rightarrow X$ is an equivalence. In this case the morphisms $s$, $t$ are strict, flat and locally of finite presentation, but not smooth in general.
\begin{proof}
By theorem \ref{Phi_corres_Astack}, the result is direct from (\cite{dejong}, Lemma 67.16.2) and the descent result on log structures (Remark below theorem \ref{Olsson_descent_fine_structure}).
\end{proof}

\begin{Theorem}\label{groupoidstack}
Let $S$ be a fine log scheme and $(U; R; s; t; c)$ be a strict log smooth groupoid of algebraic log spaces over $S$. Then the quotient stack $[U/R]$ is an algebraic log stack over $S$.
\end{Theorem}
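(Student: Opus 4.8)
The plan is to reduce the statement to the corresponding fact about ordinary algebraic stacks via the strict $2$-equivalence $\Phi_{log}$, exactly in the spirit of the proof of Theorem \ref{stackgroupiod}. First I would transport the given groupoid across the equivalence $\mathbf{FLAlg}_{\underline{S}}\rightarrow\mathbf{AlgL}_S$ of Theorem \ref{Phi_corres_Asp}. Writing $U\simeq\Phi_{log}U'$ and $R\simeq\Phi_{log}R'$ for the (essentially unique) fine log algebraic spaces $U',R'\in\mathbf{FLAlg}_{\underline{S}}$, the structure morphisms $s,t,c$, being strict, descend through the strictly fully faithful functor $\Phi_{log}|_{\mathbf{FLAlg}_{\underline{S}}}$ to morphisms $s',t',c'$; since $\Phi_{log}|_{\mathbf{FLAlg}_{\underline{S}}}$ preserves fibre products, the groupoid identities transfer and $(U';R';s';t';c')$ is again a groupoid in $\mathbf{FLAlg}_{\underline{S}}$. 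By Proposition \ref{Phi_premorph_Asp}(2), $s'$ and $t'$ are strict log smooth because $s$ and $t$ are.

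Next I would form the quotient on the scheme side. Because $s'$ and $t'$ are strict they are integral, and an integral log smooth morphism has smooth underlying morphism (the Warning after Definition \ref{DEF_morphisms}; cf.\ Theorem \ref{Olsson_Log_X}), so $\underline{s'},\underline{t'}$ are smooth morphisms of algebraic spaces. Hence $(\underline{U'};\underline{R'};\underline{s'};\underline{t'};\underline{c'})$ is a smooth groupoid in algebraic spaces, and the classical theorem on groupoid presentations (\cite{dejong}; the analogue of the Lemma used in Theorem \ref{stackgroupiod}) shows that $[\underline{U'}/\underline{R'}]$ is an algebraic stack over $\underline{S}$. Since $s',t',c'$ are strict, the fine log structure $\M_{U'}$ carries a descent datum along the smooth groupoid which, by the descent theorem for fine log structures (Theorem \ref{Olsson_descent_fine_structure} and the Remark following it), produces a fine log structure on $[\underline{U'}/\underline{R'}]$. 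This yields a log algebraic stack $[U'/R']\in\mathbf{FLAS}_{\underline{S}}$.

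Finally I would push forward by $\Phi_{log}$. By Proposition \ref{Phialgstack}, $\Phi_{log}[U'/R']$ is an algebraic log stack over $S$, while the abstract–nonsense identity recorded before Theorem \ref{stackgroupiod} gives $\Phi_{log}[U'/R']\simeq\Phi_{log}U'/\Phi_{log}R'\simeq U/R=[U/R]$; therefore $[U/R]$ is an algebraic log stack. Alternatively, one may argue entirely on the $\mathbf{Flog}_S$ side: the quotient map $q\colon U\rightarrow[U/R]$ satisfies $U\times_{[U/R]}U\simeq R$ with projections $s,t$, so $q$ is Asp-representable, strict, surjective, flat and locally of finite presentation, and Theorem \ref{fppfrepofstack} applies directly with $\X=U$.

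The step I expect to be the main obstacle is the descent of the fine log structure to the quotient stack, together with the identification $\Phi_{log}[U'/R']\simeq[U/R]$: one must check that the descent datum for $\M_{U'}$ supplied by the strict morphisms $s',t',c'$ satisfies the cocycle condition and yields a genuinely fine log structure on $[\underline{U'}/\underline{R'}]$, and that the resulting log algebraic stack is carried by $\Phi_{log}$ to the quotient stack $[U/R]$ formed over $\mathbf{Flog}_S$. Both points are precisely where the strictness hypothesis on the groupoid and the log–descent Theorem \ref{Olsson_descent_fine_structure} do the essential work.
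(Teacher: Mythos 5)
Your proposal is correct and follows essentially the same route as the paper: transport the groupoid through $\Phi_{log}$ to a groupoid of fine log algebraic spaces with smooth underlying structure maps, invoke the Stacks Project theorem that the quotient of a smooth groupoid in algebraic spaces is an algebraic stack, descend the fine log structure using Theorem \ref{Olsson_descent_fine_structure}, and carry the result back via $\Phi_{log}$. The extra alternative you sketch via Theorem \ref{fppfrepofstack} is a reasonable shortcut but is not the argument the paper uses.
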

\begin{proof}
By theorem \ref{Phi_corres_Astack}, we may assume that $(U; R; s; t; c)=\Phi_{log}(U_0; R_0; s_0; t_0; c_0)$ comes from the groupoid of fine log algebraic spaces. It is directly from (\cite{dejong}, Theorem 67.17.3) and the descent result on fine log structures (Remark below theorem \ref{Olsson_descent_fine_structure}) that $[U_0/R_0]$ is a log algebraic stack. Hence $[U/R]$ is an algebraic log stack.
\end{proof}

\subsection{DM = unramified diagonal}
\begin{Theorem}\label{DM=unramify_diagonal}
An algebraic log stack $\X$ is DM if and only if the diagonal $\Delta_{\X}$ is unramified.
\end{Theorem}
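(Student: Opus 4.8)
The plan is to deduce the statement from the corresponding characterization of Deligne--Mumford stacks in the category of fine log algebraic stacks, transporting everything across the strict $2$-equivalence $\Phi_{log}:\mathbf{FLAS}_{\underline{S}}\to\mathbf{ALS}_S$ of Theorem \ref{Phi_corres_Astack}. By that theorem there is a fine log algebraic stack $\X'$, unique up to equivalence, with $\Phi_{log}\X'\simeq\X$, and I would work with $\X'$ throughout, so that the whole proof becomes a matter of matching properties on the two sides of $\Phi_{log}$ with the already-known statement for ordinary algebraic stacks.

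First I would reduce the DM condition. By Proposition \ref{Phi_preproperty_Astack}(2), $\X=\Phi_{log}\X'$ is DM if and only if $\X'$ is DM, and for $\X'\in\mathbf{FLAS}_{\underline{S}}$ being DM is a property of the underlying algebraic stack $\underline{\X'}$ alone, so it is equivalent to $\underline{\X'}$ being a Deligne--Mumford algebraic stack. The classical characterization (\cite{dejong}) then says that $\underline{\X'}$ is DM if and only if its diagonal $\Delta_{\underline{\X'}}$ is unramified. It therefore remains only to match the condition ``$\Delta_{\underline{\X'}}$ unramified'' with ``$\Delta_{\X}$ unramified''.

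For this I would use that $\Phi_{log}$ is compatible with fibre products and with the formation of the diagonal, so that $\Delta_{\X}=\Phi_{log}\Delta_{\X'}$ (exactly as already observed in the proof of Proposition \ref{Phi_premorph_Astack}), and I would invoke Proposition \ref{Phi_premorph_Astack} to transfer unramifiedness of the Asp-representable diagonal back and forth between $\Delta_{\X}$ and $\Delta_{\X'}$. What is then left is the purely logarithmic point of relating unramifiedness of $\Delta_{\X'}$ to unramifiedness of its underlying morphism $\Delta_{\underline{\X'}}$.

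The hard part is precisely this last step, because the diagonal is never strict: the map it induces on log structures is the codiagonal $\M_{\X'}\oplus_{\M_S}\M_{\X'}\to\M_{\X'}$, which is an (exact) surjection rather than an isomorphism. I would argue that, exactly because this map is surjective, the diagonal is automatically unramified in the logarithmic fibre direction, so that $\Delta_{\X'}$ is unramified if and only if $\Delta_{\underline{\X'}}$ is; concretely, I would pass through Olsson's stack $\mathbf{Log}_S$ and Theorem \ref{Olsson_Log_X}, which identifies log (un)ramifiedness of a morphism $f$ with ordinary unramifiedness of $\mathbf{Log}(f)$, thereby reducing the logarithmic assertion to a statement about ordinary algebraic stacks. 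Chaining the equivalences then yields $\X$ is DM $\iff \underline{\X'}$ is DM $\iff \Delta_{\underline{\X'}}$ is unramified $\iff \Delta_{\X}$ is unramified, which is the desired conclusion.
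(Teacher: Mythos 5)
The first half of your argument coincides with the paper's: reduce via Theorem \ref{Phi_corres_Astack} to a fine log algebraic stack $\X'$ with $\Phi_{log}\X'\simeq\X$, use Proposition \ref{Phi_preproperty_Astack} to identify DM-ness of $\X$ with DM-ness of $\X'$ (equivalently of $\underline{\X'}$), and invoke the classical criterion for $\underline{\X'}$. The divergence, and the gap, lies in how you pass between $\Delta_{\underline{\X'}}$ and $\Delta_{\X}$. You write $\Delta_{\X}=\Phi_{log}\Delta_{\X'}$ and propose to apply Proposition \ref{Phi_premorph_Astack} directly to $\Delta_{\X'}$; but the ``if and only if'' part of that proposition concerns morphisms of $\mathbf{FLAS}_{\underline{S}}$, whereas $\Delta_{\X'}$ has target the coherent product $\X'\times\X'$, which is not fine. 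The morphism that $\Phi_{log}$ actually matches with $\Delta_{\X}$ is the factorization $\delta:\X'\rightarrow(\X'\times\X')^{int}$ through the integral part. The paper's proof hinges on exactly this point: it decomposes $\Delta_{\X'}=\tau\circ\delta$ with $\tau:(\X'\times\X')^{int}\rightarrow\X'\times\X'$ the canonical closed immersion, applies Proposition \ref{Phi_premorph_Astack} to $\delta$ to get that $\delta$ is unramified if and only if $\Delta_{\X}$ is, and then uses that unramified morphisms of algebraic stacks are stable under composition with, and cancellation against, closed immersions to conclude that $\delta$ is unramified if and only if $\Delta_{\X'}$ is. No logarithmic input is needed at this stage.

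Your ``hard part'' --- relating log-unramifiedness of the non-strict diagonal to unramifiedness of $\Delta_{\underline{\X'}}$ via surjectivity of the codiagonal and a detour through $\mathbf{Log}_S$ --- addresses a comparison that does not actually arise. In this paper ``unramified'' (as opposed to ``log unramified'') is the property, \'etale local on source and target, that the \emph{underlying} morphism is unramified; so once you are looking at the correct morphism $\delta$ into the integral part, its unramifiedness is by definition a statement about underlying stacks, and the codiagonal of log structures plays no role. If you instead insist on working with log-unramifiedness throughout, you would additionally have to prove that $\underline{\X'}$ is DM if and only if $\Delta_{\X'}$ is \emph{log} unramified, which is not the classical criterion you cite and would require its own argument. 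The fix is local: keep your first two steps, and replace the final step by the factorization through $(\X'\times\X')^{int}$ together with the closed-immersion and cancellation properties of unramified morphisms.
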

\begin{proof}
By theorem \ref{Phi_corres_Astack} we can assume that $\X=\Phi_{log}\X'$ for a fine log algebraic stack $\X'$. Since $\X$ is DM if and only if $\X'$ is DM (theorem \ref{Phi_preproperty_Astack}) and the latter is equivalent to that $\Delta_{\X'}$ is unramified, it's enough to show that $\Delta_{\X'}$ is unramified if and only if $\Delta_{\X}$ is unramified.

Consider the decomposition $$\xymatrix{\Delta_{\X'}: \X'\ar[r]^{\delta} & (\X'\times\X')^{int} \ar[r]^\tau & \X'\times\X'}$$. We know from theorem \ref{Phi_premorph_Astack} that $\delta$ is unramified if and only if $\Delta_{\X}$ is unramified. Since  $\tau$ is a closed immersion we have that $\delta$ is unramified if and only if $\Delta_{\X'}$ is unramified.
\end{proof}

\begin{Corollary}\label{Algspa=mono_diagonal}
For an algebraic log stack $\X$, the following are equivalent:
 \begin{enumerate}
   \item $\X$ is an algebraic log space;
   \item for every $x\in\X_U$, where $U\in\mathbf{Flog}_S$, $Aut_{\X_U}(x)=\{id_x\}$;
   \item the diagonal $\Delta_{\X}:\X\rightarrow\X\times_{\X}\X$ is fully faithful.
 \end{enumerate}
\end{Corollary}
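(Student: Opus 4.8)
The plan is to prove the cyclic chain $(1)\Rightarrow(2)\Rightarrow(3)\Rightarrow(1)$, reducing the one substantial implication to the classical statement for algebraic stacks by means of the equivalence $\Phi_{log}$ of Theorem \ref{Phi_corres_Astack}.

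The implication $(1)\Rightarrow(2)$ is formal: by Definition \ref{logalg} an algebraic log space is a \emph{sheaf} on $\mathbf{Flog}_{S,fppf}$, so the associated category fibered in groupoids is fibered in sets and $\mathrm{Aut}_{\X_U}(x)=\{id_x\}$ for every $U\in\mathbf{Flog}_S$ and every $x\in\X_U$. The equivalence $(2)\Leftrightarrow(3)$ I would dispatch at the level of fiber categories, independently of any algebraicity: for $a,b\in\X_U$ the diagonal induces $\mathrm{Hom}_{\X_U}(a,b)\to\mathrm{Hom}_{\X_U}(a,b)^{\times 2}$, $\phi\mapsto(\phi,\phi)$, which is always injective and is surjective exactly when $\#\mathrm{Hom}_{\X_U}(a,b)\le 1$; in a groupoid this holds for all $a,b$ precisely when every $\mathrm{Aut}_{\X_U}(a)$ is trivial. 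Thus full faithfulness of $\Delta_{\X}$ is literally the assertion that each fiber $\X_U$ is a setoid, which is condition $(2)$; so $(2)$ and $(3)$ are interchangeable in what follows.

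The content is in $(3)\Rightarrow(1)$, and here I would pass through $\Phi_{log}$. By Theorem \ref{Phi_corres_Astack} write $\X\simeq\Phi_{log}\X'$ with $\X'$ a fine log algebraic stack, underlying stack $\underline{\X'}$ and log structure $\M_{\X'}$. If one can show that $(3)$ forces $\underline{\X'}$ to have trivial automorphism groups, then $\underline{\X'}$ is an algebraic space by the classical criterion that an algebraic stack with monomorphic diagonal is an algebraic space (\cite{dejong}); whence $\X'$ is a fine log algebraic space and Proposition \ref{Phialgsp} identifies $\X=\Phi_{log}\X'$ as an algebraic log space, giving $(1)$. So the whole problem is reduced to transporting the automorphism hypothesis through $\Phi_{log}$, and for this I would use the explicit description of the objects and morphisms of $\Phi_{log}\X'$ in Proposition \ref{AB_Phi} together with the characterization of minimal objects in Lemma \ref{GC}: an object $\xi$ of $\underline{\X'}$ over a scheme $T$ corresponds to the minimal object $(\xi,\mathrm{id})$ of $\X$ over the fine log scheme $\M_{\X'}\xi$, and an automorphism $a$ of $\xi$ produces the morphism $(a,\M_{\X'}a)$ of $\X$.

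I expect this transport to be the main obstacle, and the delicate feature is that $(a,\M_{\X'}a)$ is in general \emph{not} a morphism over the identity of the base $\M_{\X'}\xi$: the induced $\M_{\X'}a$ is a possibly nontrivial automorphism of the fine log scheme $\M_{\X'}\xi$ inside $\mathbf{Flog}_S$. Consequently the purely fiberwise reading of $(2)$ only sees, a priori, the subgroup of $\mathrm{Aut}_{\underline{\X'}}(\xi)$ acting trivially on $\M_{\X'}\xi$, whereas $(1)$ demands that the whole group be trivial; the crux is therefore to extract the full strength of $(3)$ — which is phrased through the diagonal, hence tested against arbitrary objects of $\mathbf{CFG}/\mathbf{Flog}_S$ rather than a single fiber $\X_U$ — and in particular to control the automorphisms of the base fine log schemes themselves. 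Concretely I would compute $\X\times_{\X\times_S\X}\X$ from the fiber-product formula preserved by $\Phi_{log}$ (Lemma \ref{AB_Phi_preserveproduct}) and compare it with the Asp-representable diagonal supplied by the algebraic-log-stack hypothesis, so that the monomorphism property of $\Delta_{\X}$ can be rephrased as a property of $\underline{\X'}$ after restriction to strict test objects. Once $\underline{\X'}$ is recognized as an algebraic space the cycle closes.
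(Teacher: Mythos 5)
Your handling of $(1)\Rightarrow(2)$ and $(2)\Leftrightarrow(3)$ matches the paper (both are dispatched as formal). For $(3)\Rightarrow(1)$, however, your proposal has a genuine gap, and you name it yourself: the step ``$(3)$ forces $\underline{\X'}$ to have trivial automorphism groups'' is never actually carried out. As you correctly observe, an object of $\X=\Phi_{log}\X'$ over $U\in\mathbf{Flog}_S$ is a pair $(\xi,f\colon U\to\M_{\X'}\xi)$, and an automorphism of it in the fiber $\X_U$ is an automorphism $a$ of $\xi$ with $\M_{\X'}a\circ f=f$; so condition $(2)$ a priori only kills the subgroup of $\mathrm{Aut}_{\underline{\X'}}(\xi)$ acting compatibly with $f$ on the log structure, not all of $\mathrm{Aut}_{\underline{\X'}}(\xi)$. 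Your closing paragraph (``I would compute $\X\times_{\X\times_S\X}\X$\dots so that the monomorphism property can be rephrased\dots'') is a statement of intent, not an argument, and it is precisely at this point that the proof is missing. Without resolving it you cannot invoke the classical criterion for $\underline{\X'}$ to be an algebraic space, so the cycle does not close.

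The paper avoids this transport problem entirely by staying on the algebraic log stack side. From $(3)$, full faithfulness of $\Delta_{\X}$ gives that $\Delta_{\X}$ is unramified, so Theorem \ref{DM=unramify_diagonal} makes $\X$ a DM log stack, which already supplies a strict log \'etale cover by a fine log scheme; it then remains only to upgrade the Asp-representable diagonal to a representable one, which is done by noting that for any $U\to\X\times_{\X}\X$ from a fine log scheme, the projection $\X\times_{\X\times_{\X}\X}U\to U$ is a separated, quasi-finite, locally finitely presented monomorphism, hence quasi-affine, hence a fine log scheme. This checks the two axioms of Definition \ref{logalg} directly and never requires comparing automorphism groups across $\Phi_{log}$. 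If you want to salvage your route, you would need to prove the missing comparison lemma (that triviality of all fiberwise automorphism groups of $\Phi_{log}\X'$ over all of $\mathbf{Flog}_S$ forces triviality of automorphisms in $\underline{\X'}$), which is not obviously easier than the paper's direct argument.
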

\begin{proof}
($1\Rightarrow 2$) is clear. By abstract nonsense we know that $(2\Leftrightarrow 3)$.

($3\Rightarrow 1$): Since the diagonal $\Delta_{\X}:\X\rightarrow\X\times_{\X}\X$ is fully faithful, it is unramified. Hence $\X$ has an \'etale cover by fine log scheme. It remains to show that $\Delta_{\X}:\X\rightarrow\X\times_{\X}\X$ is representable. Given any $U\rightarrow \X\times_{\X}\X$, where $U$ is a fine log scheme. $\X\times_{\X\times_{\X}\X}U\rightarrow U$ is separate, quasi-finite, locally of finite presentation since it is fully faithful. So $\X\times_{\X\times_{\X}\X}U\rightarrow U$ is quasi-affine and $\X\times_{\X\times_{\X}\X}U$ is a fine log scheme.
\end{proof}

\begin{Corollary}\label{criterion_Asprep}
A morphism $f:\X\rightarrow\Y$ in $\mathbf{ALS}_S$ is Asp-representable if and only if $\Delta_f:\X\rightarrow\X\times_{\Y}\X$ is a monomorphism.
\end{Corollary}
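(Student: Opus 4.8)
The plan is to reduce the statement to Corollary \ref{Algspa=mono_diagonal}, which characterizes when an algebraic log stack is an algebraic log space, by means of a computation of automorphism groups in a fiber product. By Definition \ref{defrep_stack}, $f$ is Asp-representable precisely when $\X\times_\Y U$ is an algebraic log space for every fine log scheme $U$ equipped with a morphism $g:U\rightarrow\Y$. Since $\X$, $\Y$ are algebraic log stacks and $U$ is a fine log scheme (hence an algebraic log space, hence an algebraic log stack), each $\X\times_\Y U$ is automatically an algebraic log stack by closure of $\mathbf{ALS}_S$ under fiber products, so Corollary \ref{Algspa=mono_diagonal} applies to it: $\X\times_\Y U$ is an algebraic log space if and only if $Aut_{(\X\times_\Y U)_V}(\xi)=\{id_\xi\}$ for every $V\in\mathbf{Flog}_S$ and every $\xi\in(\X\times_\Y U)_V$.

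First I would unwind these automorphism groups. An object $\xi$ of $\X\times_\Y U$ over $V$ is a triple $(x,u,\alpha)$ with $x\in\X_V$, $u\in U_V$, and $\alpha:f(x)\xrightarrow{\sim} g(u)$ in $\Y_V$. Because $U$ is a fine log scheme, $Aut_{U_V}(u)=\{id_u\}$, so an automorphism of $\xi$ is just a $\phi\in Aut_{\X_V}(x)$ compatible with $\alpha$, and that compatibility forces $f(\phi)=id_{f(x)}$; hence
$$Aut_{(\X\times_\Y U)_V}(\xi)=\ker\bigl(Aut_{\X_V}(x)\rightarrow Aut_{\Y_V}(f(x))\bigr).$$
Therefore $\X\times_\Y U$ has trivial automorphisms for all admissible $U,V$ if and only if $Aut_{\X_V}(x)\rightarrow Aut_{\Y_V}(f(x))$ is injective for every $V\in\mathbf{Flog}_S$ and every $x\in\X_V$. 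That every such $x$ is genuinely realized is seen by taking $U=V$ with $g$ the morphism $V\rightarrow\Y$ corresponding to the object $f(x)\in\Y_V$, and $u=id_V$, so that $\xi=(x,id_V,id)$ recovers the kernel above.

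It remains to match this injectivity condition with the monomorphism condition on $\Delta_f$. Here I would invoke the standard categorical fact that, for a $1$-morphism of groupoid fibrations, $\Delta_f:\X\rightarrow\X\times_\Y\X$ is a monomorphism, i.e. fully faithful, if and only if $f$ is faithful; and that, because the fibers are groupoids (a nonempty $Hom$-set being a torsor under the relevant automorphism group), faithfulness of $f$ is equivalent to injectivity of $Aut_{\X_V}(x)\rightarrow Aut_{\Y_V}(f(x))$ for all $V$ and all $x\in\X_V$. Chaining the three equivalences then yields the desired characterization.

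The main obstacle I anticipate is purely bookkeeping: verifying the identification of $Aut_\xi$ with the relative automorphism kernel under the correct compatibility of $\alpha$, and confirming that every $x\in\X_V$ arises from some admissible test datum $(U,g,u)$, so that no automorphism group is overlooked when ranging over all $U\rightarrow\Y$. Once this dictionary between ``$\X\times_\Y U$ is an algebraic log space for all $U$'' and ``$f$ is faithful'' is in place, both implications are formal.
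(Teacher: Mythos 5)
Your proposal is correct, and it is precisely the argument the paper intends but omits: the corollary is stated immediately after Corollary \ref{Algspa=mono_diagonal} with no proof, and the expected route is exactly yours --- apply Corollary \ref{Algspa=mono_diagonal} to each fiber product $\X\times_{\Y}U$ (an algebraic log stack by closure under fiber products), identify $Aut_{(\X\times_\Y U)_V}(\xi)$ with $\ker\bigl(Aut_{\X_V}(x)\rightarrow Aut_{\Y_V}(f(x))\bigr)$, and translate triviality of these kernels into faithfulness of $f$, i.e.\ full faithfulness of $\Delta_f$. Your reading of ``monomorphism'' as ``fully faithful'' is consistent with the paper's own usage in Corollary \ref{Algspa=mono_diagonal}(3), and the torsor argument reducing faithfulness to injectivity on automorphism groups is sound.
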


\begin{bibdiv}
\begin{biblist}

\bib{Ambro}{article}{
   author={Abramovich D},
   author={Chen Q},
   author={Gillam D},
   author={et al.},
   title={Logarithmic geometry and moduli.},
   conference={
      title={Handbook of Moduli: Volume I},
   },
   book={
      series={Advanced Lectures in Mathematics},
      volume={24},
      publisher={International Press},
   },
   date={2013},
   pages={1--62},
   review={\MR{2223406}},
}

\bib{dejong}{article}{
    author={de Jong, Aise Johan},
    title={Stacks Project.(Jan 21, 2014)},
    eprint={http://stacks.math.columbia.edu/download/book.pdf},
}

\bib{Gi1}{article}{
   author={Gillam, W. D.},
   title={Logarithmic stacks and minimality},
   journal={Internat. J. Math.},
   volume={23},
   date={2012},
   number={7},
   pages={1250069, 38},
   issn={0129-167X},
   review={\MR{2945649}},
   doi={10.1142/S0129167X12500693},
}

\bib{Gross}{article}{
   author={Gross, Mark},
   author={Siebert, Bernd},
   title={Logarithmic Gromov-Witten invariants},
   journal={J. Amer. Math. Soc.},
   volume={26},
   date={2013},
   number={2},
   pages={451--510},
   issn={0894-0347},
   review={\MR{3011419}},
   doi={10.1090/S0894-0347-2012-00757-7},
}

\bib{FKato}{article}{
   author={Kato, Fumiharu},
   title={Log smooth deformation and moduli of log smooth curves},
   journal={Internat. J. Math.},
   volume={11},
   date={2000},
   number={2},
   pages={215--232},
   issn={0129-167X},
   review={\MR{1754621 (2001d:14016)}},
   doi={10.1142/S0129167X0000012X},
}

\bib{KKato}{article}{
   author={Kato, Kazuya},
   title={Logarithmic structures of Fontaine-Illusie},
   conference={
      title={Algebraic analysis, geometry, and number theory (Baltimore, MD,
      1988)},
   },
   book={
      publisher={Johns Hopkins Univ. Press, Baltimore, MD},
   },
   date={1989},
   pages={191--224},
   review={\MR{1463703 (99b:14020)}},
}

\bib{KKato2}{article}{
   author={Kato, Kazuya},
   title={Toric singularities},
   journal={Amer. J. Math.},
   volume={116},
   date={1994},
   number={5},
   pages={1073--1099},
   issn={0002-9327},
   review={\MR{1296725 (95g:14056)}},
   doi={10.2307/2374941},
}

\bib{Ol1}{article}{
   author={Olsson, Martin C.},
   title={Logarithmic geometry and algebraic stacks},
   language={English, with English and French summaries},
   journal={Ann. Sci. \'Ecole Norm. Sup. (4)},
   volume={36},
   date={2003},
   number={5},
   pages={747--791},
   issn={0012-9593},
   review={\MR{2032986 (2004k:14018)}},
   doi={10.1016/j.ansens.2002.11.001},
}

\bib{Ol2}{book}{
   author={Olsson, Martin C.},
   title={Compactifying moduli spaces for abelian varieties},
   series={Lecture Notes in Mathematics},
   volume={1958},
   publisher={Springer-Verlag, Berlin},
   date={2008},
   pages={viii+278},
   isbn={978-3-540-70518-5},
   review={\MR{2446415 (2009h:14072)}},
   doi={10.1007/978-3-540-70519-2},
}

\bib{Ol3}{article}{
   author={Olsson, Martin C.},
   title={Semistable degenerations and period spaces for polarized $K3$
   surfaces},
   journal={Duke Math. J.},
   volume={125},
   date={2004},
   number={1},
   pages={121--203},
   issn={0012-7094},
   review={\MR{2097359 (2005j:14056)}},
   doi={10.1215/S0012-7094-04-12515-1},
}

\bib{Ol4}{book}{
   author={Olsson, Martin Christian},
   title={Log algebraic stacks and moduli of log schemes},
   note={Thesis (Ph.D.)--University of California, Berkeley},
   publisher={ProQuest LLC, Ann Arbor, MI},
   date={2001},
   pages={186},
   isbn={978-0493-30984-2},
   review={\MR{2702292}},
}

\bib{Vistoli}{article}{
   author={Vistoli, Angelo},
   title={Grothendieck topologies, fibered categories and descent theory},
   conference={
      title={Fundamental algebraic geometry},
   },
   book={
      series={Math. Surveys Monogr.},
      volume={123},
      publisher={Amer. Math. Soc., Providence, RI},
   },
   date={2005},
   pages={1--104},
   review={\MR{2223406}},
}
\end{biblist}
\end{bibdiv}

\end{document}